\newtheorem{theorem}{Theorem}[section]
\newtheorem{lemma}[theorem]{Lemma}
\newtheorem{proposition}[theorem]{Proposition}
\newtheorem{corollary}[theorem]{Corollary} 
\newtheorem{conjecture}[theorem]{Conjecture}
\theoremstyle{definition}
\newtheorem{definition}[theorem]{Definition}
\theoremstyle{remark}
\newtheorem{remark}[theorem]{Remark}
\newtheorem{question}[theorem]{Question}
\newtheorem{example}[theorem]{Example}
\algrenewcommand{\algorithmiccomment}[1]{\hfill $\rhd$ \emph{#1}}
\algrenewcommand{\algorithmicrequire}{\textbf{Input:}}
\algrenewcommand{\algorithmicensure}{\textbf{Output:}}
\algnewcommand{\Or}{\textbf{or}}
\algnewcommand{\And}{\textbf{and}}
\algnewcommand{\Not}{\textbf{not}\,}
\newcommand\cG{{\mathcal G}}
\newcommand\cM{{\mathcal M}}
\newcommand\cN{{\mathcal N}}
\newcommand\cP{{\mathcal P}}
\newcommand\cS{{\mathcal S}}
\newcommand\cT{{\mathcal T}}
\newcommand\CC{{\mathbb C}}
\newcommand\RR{{\mathbb R}}
\newcommand\TT{{\mathbb T}}
\newcommand\ZZ{{\mathbb Z}}
\newcommand\SetOf[2]{\left\{\left.#1\vphantom{#2}\ \right|\ #2\vphantom{#1}\right\}}
\newcommand\Dprod[2]{\Delta_{#1} \times \Delta_{#2}}
\DeclareMathOperator*{\conv}{\operatorname{conv}}
\newcommand{\lno}{\ell}
\newcommand{\rno}{r}
\newcommand{\lnoset}{L}
\newcommand{\rnoset}{R}
\newcommand{\unit}[1]{e_{#1}}
\newcommand{\simplat}[2]{\Delta^{\ZZ}(#2,#1)}
\author{Georg Loho}
\address[Georg Loho]{London School of Economics, London, United Kingdom}
\email{g.loho@lse.ac.uk}
\author{Ben Smith}
\address[Ben Smith]{University of Manchester and Heilbronn Institute for Mathematical Research, United Kingdom}
\email{benjamin.smith-3@manchester.ac.uk}
\thanks{Georg Loho was supported by the Swiss National Science Foundation (SNSF) within the project \emph{Convexity, geometry of numbers, and the complexity of integer programming (Nr.~163071)} and profited from the fruitful atmosphere at the Institut Mittag-Leffler within the Research Program `Tropical Geometry, Amoebas, and Polytopes'. Ben Smith was supported by the EPSRC grant \emph{Arrangements of tropical linear spaces (1673882)}. } 
\keywords{Matching field, triangulation, lattice points, product of simplices, linkage property, bipartite graph}
\begin{document}

\title{Matching fields and lattice points of simplices}

\begin{abstract}
  We show that the Chow covectors of a linkage matching field define a bijection between certain degree vectors and lattice points, and we demonstrate how one can recover the linkage matching field from this bijection.
  This resolves two open questions from Sturmfels \& Zelevinsky (1993) on linkage matching fields.
  For this, we give an explicit construction that associates a bipartite incidence graph of an ordered partition of a common set to each lattice point in a dilated simplex.

  Given a triangulation of a product of two simplices encoded by a set of spanning trees on a bipartite node set, we similarly prove that the bijection from left to right degree vectors of the trees is enough to recover the triangulation.
  As additional results, we show a cryptomorphic description of linkage matching fields and characterise the flip graph of a linkage matching field in terms of its prodsimplicial flag complex.
  Finally, we relate our findings to transversal matroids through the tropical Stiefel map.
\end{abstract}

\maketitle

\section{Introduction}

A \emph{matching field} is a set of perfect matchings on bipartite node sets $\sigma \sqcup [d]$, one for each $d$-subset $\sigma$ of an $n$-set $\lnoset$.
A natural example is the set of weight minimal matchings of size $d$ in a complete bipartite graph $K_{n,d}$ with generic edge weights.
Matching fields arising in this way are \emph{coherent}.
Without any further requirements, the set of matchings can be arbitrarily unstructured.
Our main objects of study will be \emph{linkage matching fields}.
They fulfil the additional property that each subset of matchings defined on a $(d+1)$-subset of $\lnoset$ is coherent.
Sturmfels \& Zelevinsky introduced  matching fields in~\cite{SturmfelsZelevinsky:1993} to study the Newton polytope of the product of all maximal minors of an $(n \times d)$-matrix of indeterminates $X = (x_{ji})$.
The linkage property occurs as a combinatorial description of the determinantal identity~\cite[Equation 0.1]{SturmfelsZelevinsky:1993}
\[
\sum_{j \in \tau} (-1)^{j}x_{ji}\, X_{\tau \setminus \{j\}} = 0 \mbox{ for all } i \in [d], \tau \in \binom{[n]}{d+1} \enspace , 
\]
where $X_{\sigma}$ is the minor of the rows labelled by a $d$-subset $\sigma \subseteq [n]$.
This is analogous to the motivation of the exchange axiom of a matroid from the Pl{\"u}cker relations.

\subsection{Motivation and Former Work} Linkage matching fields have already proven to be useful in several contexts.
The combination of the results in~\cite{SturmfelsZelevinsky:1993, BernsteinZelevinsky:1993} showed that the maximal minors of an $(n \times d)$-matrix of indeterminates form a universal Gr\"{o}bner basis of the ideal generated by them.
They occur in tropical linear algebra, as tropical determinants are just minimal matchings in a weighted bipartite graph, yielding a matching field in the generic case.
This was used in~\cite{RichterGebertSturmfelsTheobald} to devise a tropical Cramer's rule. 
Further on, avoiding the genericity assumption, a generalisation called `matching multifields' was employed to examine the structure of the image of the tropical Stiefel map in~\cite{FinkRincon:2015}.
Another recent work related to Grassmannians~\cite{MohammadiShaw:2018} uses matching fields to find toric degenerations.

\smallskip

	A collection of graphs associated to a matching field was introduced in~\cite{SturmfelsZelevinsky:1993}, which we refer to as the \emph{Chow covectors} of a matching field.
	They have a combinatorial characterisation as the minimal transversals to a linkage matching field, as shown in~\cite{BernsteinZelevinsky:1993}, and are a key combinatorial tool in the proof of the universal Gr\"{o}bner basis result mentioned earlier.
	Their initial introduction was as `brackets' to study the variety of degenerate matrices in $\CC^{n \times d}$.
	In particular, they give insight into the Chow form, a polynomial invariant that determines the variety.
	Sturmfels \& Zelevinksy showed that `extremal' terms of the Chow form can be recovered from the Chow covectors by taking their product as brackets.

\smallskip

In~\cite{CeballosPadrolSarmiento:2015}, the characterisation of the extendibility of a partial triangulation of a product of two simplices was built on a representation in terms of unions of linkage matching fields from~\cite{OhYoo-ME:2013}.
The latter can also be considered as a cryptomorphism for generic tropical oriented matroids.
After the introduction of tropical oriented matroids in~\cite{ArdilaDevelin:2009}, it was shown in~\cite{Horn1} that they are equivalent to the subdivisions of a product of two simplices.
For tropical oriented matroids which come from tropical point configurations, this cryptomorphism was already established earlier \cite{BabsonBillera:1998, JoswigLoho:2016}. 
As linkage matching fields are the building blocks of tropical oriented matroids, we propose to consider them as another matroid-like structure for tropical geometry.

Only recently, the first author proposed a variation of tropical oriented matroids, namely `signed tropical matroids', to develop an abstraction of tropical linear programming~\cite{Loho17} analogous to oriented matroid programming.
Note that the algorithm in the latter paper relies on the interplay of the linkage covectors, see Definition~\ref{def:linkage+covector}, of some matching fields derived from a triangulation of a product of two simplices. 

\subsection{Our Results}
The main tool for our considerations are \emph{topes}, which occur in the context of tropical oriented matroids \cite{ArdilaDevelin:2009}.
We generalise the concept of \emph{matching fields} to \emph{tope fields}.
While matching fields comprise a set of matchings, tope fields can be seen as sets of ordered partitions of a varying ground set.
We transfer the crucial \emph{linkage property} from matching fields to tope fields derived from a linkage matching field. 
This allows us to associate \emph{maximal topes} arising from a linkage matching field to the lattice points in a dilated simplex. 

\smallskip

We obtain an explicit construction of the Chow covectors.
Our approach leads to a representation derived from the \emph{maximal topes} of a linkage matching field in Proposition~\ref{prop:construct-Chow}.
This yields Theorem~\ref{thm:bijection-chow-covector-lattice-points}, resolving Conjecture~6.10 from~\cite{SturmfelsZelevinsky:1993} which was only resolved for coherent matching fields in~\cite{BernsteinZelevinsky:1993}.
Each bipartite graph induces a pair of lattice points, namely the pair of its left and right degree vector.
The theorem shows that the set of degree vector pairs for all Chow covectors gives rise to a bijection from $(n-d+1)$-subsets of $[n]$ to lattice points in the dilated simplex $(n-d+1)\Delta_{d-1}$.
Naturally, one can now ask if this bijection uniquely defines the matching field. 
This question is a generalisation of \cite[{Conjecture~6.8~b)}]{SturmfelsZelevinsky:1993} for linkage matching fields.
We answer it positively in Theorem~\ref{thm:lattice-points-Chow}.
These results allow us to give a cryptomorphic description of linkage matching fields in the form of \emph{tope arrangements}.

\smallskip

A similar claim for triangulations of $\Dprod{n-1}{d-1}$ was made after~\cite[Theorem~12.9]{Postnikov:2009}.
Describing a triangulation as a collection of trees in the sense of \cite[Proposition~7.2]{ArdilaBilley}, one also obtains a set of pairs of lattice points.
For a triangulation of $\Delta_{n-1} \times \Delta_{d-1}$, this yields a subset of $\simplat{d-1}{n} \times \simplat{n-1}{d}$, which denotes the product of the integer lattice points in the dilated simplices $(d-1)\Delta_{n-1}$ and $(n-1)\Delta_{d-1}$.
With essentially the same reasoning as in Theorem~\ref{thm:lattice-points-Chow}, we provide an explicit construction of a triangulation from these lattice point pairs.
At the same time, this result was proven also for more general root polytopes in~\cite{GalashinNenashevPostnikov2018}.
A comparison of their advances based on \emph{trianguloids} is given in Section \ref{sec:cryptomorphism}.

\smallskip

As an additional result, we show how the topes of a linkage matching field are encoded in its flip graph.
This leads to a characterisation of its prodsimplicial flag complex (see~\cite[Section 9.2.1]{Kozlov:2008}) as the `complex of topes' in Theorem~\ref{thm:prodsimplicial+complex+flip+graph}.

Furthermore, we initiate the study of a \emph{combinatorial Stiefel map}, generalising the construction in~\cite{FinkRincon:2015} and~\cite{HerrmannJoswigSpeyer:2014}, motivated by the topes and trees arising from a linkage matching field. 

\smallskip

A major theme throughout is to draw parallels between matching fields and multiple other combinatorial objects, see Figures \ref{fig:matching+field+classes}, \ref{fig:trianguloid+relationship} and \ref{fig:fields+stacks}, offering new methods to study the interplay between these objects.
We think that the new tools developed in this work can help to tackle the problems posed in~\cite{ArdilaBilley} which are subject of active research~\cite{GottiPolo:2018, CeballosPadrolSarmiento:2015,Santos:2013}. 
Furthermore, representing a triangulation of $\Delta_{n-1} \times \Delta_{d-1}$ by its set of degree vector pairs yields a new parameter space for the triangulations.
It was shown in \cite{Liu:2018} that the flip graph of triangulations is not connected. 
The correspondence with lattice points might allow a more detailed study of the flip graph of triangulations.

\subsection{Overview}

We define tope fields and related concepts in Section~\ref{sec:tope-fields}.
In particular, we introduce the relation with triangulations of products of two simplices and further requirements on the topes.

Section~\ref{sec:linkage+matching+fields}~is dedicated to our results on linkage matching fields.
Our fundamental construction for linkage matching fields is presented in Theorem~\ref{thm:constructed+topes}.
It leads to the resolution of two conjectures concerning Chow covectors in Theorem~\ref{thm:bijection-chow-covector-lattice-points} and Theorem~\ref{thm:lattice-points-Chow}.
The cryptomorphism between linkage matching fields and tope arrangements in Theorem~\ref{thm:cryptomorphism+linkage+mf} as well as the description of the flip graph of a linkage matching field in Theorem~\ref{thm:prodsimplicial+complex+flip+graph} are further consequences.

In Section~\ref{sec:pairs+of+lattice+points}, we deduce the reconstructability of a triangulation from a lattice point bijection in Theorem~\ref{thm:phi+injective}, analogously to the statement for Chow covectors.

We finish with the relation to sets of transversal matroids through a combinatorial Stiefel map in Section~\ref{sec:matching+stacks+transversal+matroids}.
This leads to several questions concerning the interplay between matching fields, triangulations and matroid subdivisions. 

\section{Tope fields} \label{sec:tope-fields}

Fix a pair $(n,d)$ of positive integers where $n \geq d$. 
We study bipartite graphs on two node sets $\lnoset$ and $\rnoset$, where $|\lnoset| = n$ and $|\rnoset|=d$.
Since they are defined on the same set of nodes, we will often identify them with their set of edges written as pairs of nodes.
The elements of $\lnoset$ are denoted by $\ell_1,\ldots, \ell_n$, the elements of $\rnoset$ by $r_1, \ldots, r_d$.
We refer to nodes in $\lnoset$ as \emph{left nodes} and nodes in $\rnoset$ as \emph{right nodes}.
The left degree vector is the tuple of node degrees of $\ell_1, \ldots, \ell_n$; we define the right degree vector analogously for the elements in $\rnoset$.

For two finite sets $A \subseteq B$ we denote the characteristic vector of $A$ in $B$ by $\unit{A}^B$, where we omit the reference set $B$ if it is clear from the context.

\begin{definition}
  Let $(v_1,\ldots, v_d)$ be a tuple of positive integers with $\sum_{i=1}^{d} v_i = k \leq n$.
	For a $k$-element subset $\sigma$ of $\lnoset$, we define a \emph{tope} of type $(v_1,\ldots, v_d)$ to be a bipartite graph whose right degree vector is its type and the left degree vector is $\unit{\sigma}$.
  An $(n,d)$-\emph{tope field} of type $(v_1,\ldots, v_d)$ is a set of topes $\cM = (M_{\sigma})$ with a unique tope $M_{\sigma}$ of type $\sigma$ for each $\sigma \in \binom{[n]}{k}$.
  The sum $k = \sum_{i=1}^{d} v_i$ is the \emph{thickness} of the tope field.
  If the thickness is $d$, the type is $\unit{[d]}$ and the tope field is a \emph{matching field}.
	If the thickness is $n$, the tope field has a single tope with left degree vector $\unit{[n]}$ that we call \emph{maximal}. 
\end{definition}

Note that our definition of topes differs slightly from the original definition in \cite{ArdilaDevelin:2009} as we insist that all right nodes must have positive degree.
The recent work~\cite{GalashinNenashevPostnikov2018} refers to them as semi-matchings and also allows topes to have isolated right nodes.
We shall see that these topes can be considered as lying `at the boundary'.

There is a natural arbitrariness in the role of $\lnoset$ and $\rnoset$. The previous definition is for a \emph{left} tope field, a \emph{right} tope field can be defined analogously for $|\lnoset| \leq |\rnoset|$. This distinction will occur when we deal with matching stacks. 
Note that a tope field can also be considered as a set of surjective functions $M_{\sigma}:\sigma \rightarrow R$ where $\left|M_{\sigma}^{-1}(\rno_i)\right| = v_i$.

A \emph{sub-tope field} is a tope field which consists of the induced subgraphs on $J \sqcup I$ for subsets $J \subseteq \lnoset$ and $I \subseteq \rnoset$. Note that in general the induced subgraphs on $J \sqcup I$ do not form a tope field.

Observe that tope fields generalise matching fields in the sense of \cite{SturmfelsZelevinsky:1993} and each graph is a tope in the sense of \cite{ArdilaDevelin:2009}.
Examples of matching and tope fields are given in Figures~\ref{fig:four+two+matching+field} and~\ref{fig:four+two+tope+field}.

\begin{figure}[htb]
  \begin{center}
	\resizebox{\textwidth}{!}{
  \begin{tikzpicture}[scale=0.6]

\bigraphfourtwocoord{0}{0}{1.2}{0.6}{2.5};
\draw[EdgeStyle] (v1) to (w1);
\draw[EdgeStyle] (v2) to (w2);
\bigraphfourtwonodes

\bigraphfourtwocoord{4}{0}{1.2}{0.6}{2.5};
\draw[EdgeStyle] (v1) to (w1);
\draw[EdgeStyle] (v3) to (w2);
\bigraphfourtwonodes

\bigraphfourtwocoord{8}{0}{1.2}{0.6}{2.5};
\draw[EdgeStyle] (v1) to (w1);
\draw[EdgeStyle] (v4) to (w2);
\bigraphfourtwonodes

\bigraphfourtwocoord{12}{0}{1.2}{0.6}{2.5};
\draw[EdgeStyle] (v2) to (w1);
\draw[EdgeStyle] (v3) to (w2);
\bigraphfourtwonodes

\bigraphfourtwocoord{16}{0}{1.2}{0.6}{2.5};
\draw[EdgeStyle] (v4) to (w1);
\draw[EdgeStyle] (v2) to (w2);
\bigraphfourtwonodes

\bigraphfourtwocoord{20}{0}{1.2}{0.6}{2.5};
\draw[EdgeStyle] (v4) to (w1);
\draw[EdgeStyle] (v3) to (w2);
\bigraphfourtwonodes
\end{tikzpicture}
	}
  \caption{A $(4,2)$-matching field.}
  \label{fig:four+two+matching+field}
	\end{center}
\end{figure}
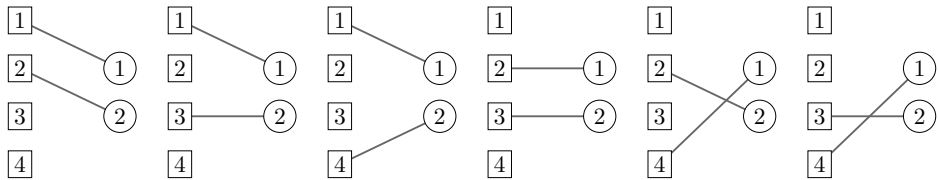

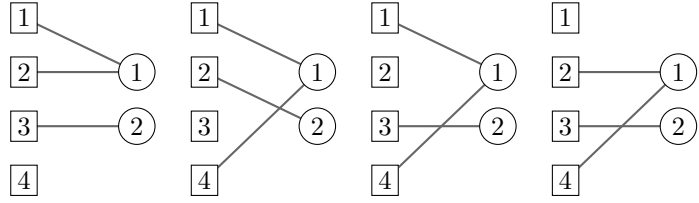
\begin{figure}[htb]
  \begin{center}
  \begin{tikzpicture}[scale=0.6]

\bigraphfourtwocoord{0}{0}{1.2}{0.6}{2.5};
\draw[EdgeStyle] (v1) to (w1);
\draw[EdgeStyle] (v2) to (w1);
\draw[EdgeStyle] (v3) to (w2);
\bigraphfourtwonodes

\bigraphfourtwocoord{4}{0}{1.2}{0.6}{2.5};
\draw[EdgeStyle] (v1) to (w1);
\draw[EdgeStyle] (v4) to (w1);
\draw[EdgeStyle] (v2) to (w2);
\bigraphfourtwonodes

\bigraphfourtwocoord{8}{0}{1.2}{0.6}{2.5};
\draw[EdgeStyle] (v1) to (w1);
\draw[EdgeStyle] (v4) to (w1);
\draw[EdgeStyle] (v3) to (w2);
\bigraphfourtwonodes

\bigraphfourtwocoord{12}{0}{1.2}{0.6}{2.5};
\draw[EdgeStyle] (v2) to (w1);
\draw[EdgeStyle] (v4) to (w1);
\draw[EdgeStyle] (v3) to (w2);
\bigraphfourtwonodes

\end{tikzpicture}
  \caption{A $(4,2)$-tope field of type $(2,1)$ with thickness 3.}
  \label{fig:four+two+tope+field}
	\end{center}
\end{figure}

\begin{example} \label{ex:coherent-matching-field}
  The most natural and well-behaved examples of matching fields are obtained from a generic matrix $A \in \RR^{n \times d}$.
	The minimal matchings in the complete bipartite graph $K_{n,d}$ weighted by the entries of $A$ give rise to a matching field.
  Such a matching field is called \emph{coherent}, cf. \cite{SturmfelsZelevinsky:1993}.
	If $A$ is not generic, one can slightly perturb the matrix to obtain a unique matching on each $d$-subset of $[n]$.
	Alternatively, one could just pick one minimal matching for each $d$-subset.
  However, the resulting matching field may be arbitrarily unstructured as one sees if $A$ is just the zero matrix.

  This idea can be extended to obtain \emph{coherent} tope fields in a similar fashion.
  We fix a vector $(v_1, \ldots, v_d) \in \ZZ_{>0}^d$ with $\sum_{i=1}^{d}v_i = k \leq n$ and a generic matrix $A \in \RR^{n \times d}$.
  We construct the matrix $\overline{A} \in \RR^{n \times k}$ by replacing the column indexed by $i$ in $A$ with $v_i$ copies of itself.
	Such a matrix gives rise to a complete bipartite graph $K_{n,k}$ with weights $\overline{A}$.
	The coherent matching field arising from $\overline{A}$ naturally yields a tope field of type $(v_1, \ldots, v_d)$ by setting $M_{\sigma}^{-1}(\rno_i)$ to the set of nodes in $\sigma$ adjacent to a copy of $r_i$ in the matching on $K_{n,k}$.
\end{example}

The process of duplicating the columns in the definition of a coherent tope field motivates the next construction.

\begin{example} \label{ex:increasing-splitting}
  An $(n,d)$-tope field $\cM = (M_{\sigma})$ of type $(v_1,\ldots, v_d)$ gives rise to an $(n,k)$-matching field $\cN$ where $k = \sum_{i=1}^{d} v_i$.
	For each node $\rno_i \in \rnoset$ we introduce $v_i$ nodes $\rno_i^{(1)}, \ldots, \rno_i^{(v_i)}$.
	Let $j^{(1)} < \ldots < j^{(v_i)}$ be the increasing list of indices denoting the elements in $M_{\sigma}^{-1}(\rno_i)$.
	By setting $N_{\sigma}(\lno_{j^{(t)}}) = \rno_i^{(t)}$ for each $t \in [v_i]$ and all $i \in [d]$, we obtain a matching field $\cN = (N_{\sigma})$.
	We call this matching field the \emph{increasing splitting} of the tope field.

	Observe that one could also consider partial splitting from a coarser to a finer tope field.
	Furthermore, note that the splitting depends on the ordering of the split copies of the nodes in $\rnoset$.
	This construction can be seen as a `refinement' of the tope field, analogous to a refinement of a polyhedral subdivision in~\cite[Definition 2.3.8]{DeLoeraRambauSantos}.
	In particular, the linkage covectors, see Definition \ref{def:linkage+covector}, of the increasing splitting can be seen as full dimensional cells in a staircase triangulation of $\Dprod{n-1}{d-1}$.
	
	Increasing splitting is analogous to the process of polarisation from commutative algebra, in which a monomial ideal can be transformed to a squarefree monomial ideal.
	This is done by replacing a single variable of degree $d$ in a generator by the product of $d$ distinct variables, giving an ideal with far simpler combinatorics.	
\end{example}

\subsection{An important example} \label{sec:matching+fields+from+triangulations}

For the polyhedral background we refer to \cite{DeLoeraRambauSantos}. 

The construction in \cite{OhYoo-ME:2013} connecting matching fields and triangulations of a product of two simplices motivates us to investigate matching fields and their connection to polyhedral constructions further.

Recall that the maximal simplices in a triangulation of $\Delta_{n-1} \times \Delta_{d-1}$ are given by a set of spanning trees on the bipartite node set $\lnoset \sqcup \rnoset$ with $|\lnoset| = n, |\rnoset| = d$, which fulfil the axioms given by Ardila \& Billey \cite{ArdilaBilley}.
A simplex with vertices $(\unit{j_1},\unit{i_1}),\ldots,(\unit{j_k},\unit{i_k}) \in \Delta_{n-1} \times \Delta_{d-1}$ corresponds to the bipartite graph with edges $(\lno_{j_1},\rno_{i_1}),\ldots,(\lno_{j_k},\rno_{i_k})$.

\begin{proposition}[{\cite[Proposition 7.2]{ArdilaBilley}}] \label{prop:char-triang} \leavevmode
  A set of trees encodes the maximal simplices of a triangulation of $\Dprod{n-1}{d-1}$ if and only if:
  \begin{enumerate} 
  \item Each tree is spanning.
  \item For each tree $G$ and each edge $e$ of $G$, either $G - e$ has an isolated node or there is another tree $H$ containing $G - e$.
  \item If two trees $G$ and $H$ contain perfect matchings on $J \sqcup I$ for $J \subseteq \lnoset$ and $I \subseteq \rnoset$ with $|J| = |I|$ then the matchings agree. 
  \end{enumerate}
\end{proposition}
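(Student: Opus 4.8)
The plan is to exploit the classical dictionary between the product of simplices and bipartite graphs: a subset of vertices $(\unit{j},\unit{i})$ of $\Dprod{n-1}{d-1}$ is affinely independent precisely when the corresponding edges $(\lno_j,\rno_i)$ form a forest on $\lnoset \sqcup \rnoset$, so the full-dimensional simplices are exactly the spanning trees, which have the correct number $n+d-1 = \dim \Dprod{n-1}{d-1} + 1$ of edges. This is the content of condition~(1), and it reduces the statement to showing that the two remaining conditions are equivalent to the family $\{\conv(G)\}$ being a triangulation, that is, covering $\Dprod{n-1}{d-1}$ and meeting face-to-face. I would prove both directions together, matching each triangulation axiom to one of the conditions.

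First I would handle proper intersection and show that it is equivalent to condition~(3). The minimal affine dependences (circuits) among the vertices of $\Dprod{n-1}{d-1}$ correspond to even cycles in the bipartite graph, each cycle splitting into two perfect matchings on its support $J \sqcup I$ with $|J| = |I|$. Two simplices $\conv(G)$ and $\conv(H)$ fail to intersect in a common face exactly when their relative interiors overlap, which by circuit analysis happens iff $G \cup H$ contains such a cycle with one of its matchings drawn from $G$ and the complementary matching from $H$. These are two distinct perfect matchings on the same $J \sqcup I$, so forbidding them is precisely condition~(3); conversely, a violation of~(3) produces such a cycle and hence an improper intersection. This gives the equivalence for any collection satisfying~(1), in either direction.

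It remains to connect condition~(2) with covering. A facet of $\conv(G)$ has the form $\conv(G-e)$, where $G-e$ is a spanning forest with two components; I would first observe that this facet lies on $\partial(\Dprod{n-1}{d-1})$ iff it is contained in a facet $\{x_j=0\}\times\Delta_{d-1}$ or $\Delta_{n-1}\times\{y_i=0\}$ of the product, which happens exactly when some node of $\lnoset\sqcup\rnoset$ is isolated in $G-e$. Thus condition~(2) asserts that every interior ridge of the complex is a facet of at least one further simplex, while condition~(3) forces it to be shared by exactly one more. Together these make $\{\conv(G)\}$ a pseudomanifold whose boundary ridges are exactly those lying on $\partial(\Dprod{n-1}{d-1})$. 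Since the product of simplices is a ball, a standard connectedness and local-degree argument then upgrades this local gluing condition to global coverage, yielding a triangulation; running the implications backwards recovers~(2) and~(3) from the triangulation axioms. The main obstacle is precisely this last step, namely deducing that the pseudomanifold tiles the whole polytope rather than merely an open subset: one must carefully certify, via the isolated-node bookkeeping, that the boundary of the glued complex coincides with $\partial(\Dprod{n-1}{d-1})$, and then invoke the ball structure to rule out gaps or overlaps in the interior.
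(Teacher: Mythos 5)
The paper does not prove this statement: it is quoted directly from \cite[Proposition 7.2]{ArdilaBilley}, so there is no internal proof to compare against. Your argument is essentially the proof given in that reference and supplied by the general theory of triangulations of point configurations (cf.\ \cite{DeLoeraRambauSantos}): affinely independent vertex sets of $\Dprod{n-1}{d-1}$ correspond to forests, circuits to cycles splitting into two partial matchings, condition~(3) to pairwise proper intersection, condition~(2) to the facet-sharing (pseudomanifold) criterion with boundary ridges exactly those having an isolated node, and connectedness of the interior of the polytope then upgrades local gluing to global coverage. One small imprecision: two full-dimensional simplices can fail to meet in a common face without their relative interiors overlapping (two triangles sharing half an edge, say), so the correct intermediate statement is the circuit criterion for proper intersection itself rather than overlap of interiors; since that criterion is what you actually apply, the argument goes through.
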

We wish to study the connection to triangulations of $\Delta_{n-1} \times \Delta_{d-1}$, therefore all trees we refer to will be spanning trees of the complete bipartite graph on $L \sqcup R$, unless otherwise stated.

Most of our arguments are independent of the embedding of the product of simplices. For the next proposition we choose the canonical embedding
\[
 \Delta_{n-1} \times \Delta_{d-1} = \conv\{(\unit{j},\unit{i}) \mid j \in [n], i \in [d]\} \subseteq \RR^{n+d} \enspace .
\]
Oh \& Yoo introduced in \cite{OhYoo-ME:2013} the \emph{extraction method} which collects the set of all partial matchings occurring in the trees encoding the triangulation. 
The fact that we obtain a matching field by taking all matchings of size $d$ occurring in the trees can be deduced from the following polyhedral construction.

\begin{proposition} \label{prop:existence-matching}
  Let $\Sigma$ be a triangulation of $\Dprod{d-1}{d-1}$. Then the bipartite graph $G$ corresponding to the minimal cell (with respect to inclusion) containing the barycentre $g = (\frac{1}{d},\ldots,\frac{1}{d})$ of $\Dprod{d-1}{d-1}$ is a perfect matching on $[d] \sqcup [d]$. 
\end{proposition}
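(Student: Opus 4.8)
The plan is to identify the minimal cell with the unique simplex $\tau$ of $\Sigma$ whose relative interior contains $g$; such a $\tau$ exists and is unique because the relative interiors of the faces of a simplicial complex partition its support, and the smallest face containing a point always contains it in its relative interior. Writing $(\unit{j_1},\unit{i_1}),\ldots,(\unit{j_k},\unit{i_k})$ for the vertices of $\tau$ and $G$ for the corresponding bipartite graph with edges $(\lno_{j_a},\rno_{i_a})$, the condition $g \in \operatorname{relint} \tau$ means that we can write $g = \sum_{a=1}^{k} \lambda_a (\unit{j_a},\unit{i_a})$ with all $\lambda_a > 0$ and $\sum_a \lambda_a = 1$. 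First I would read off the two coordinate blocks of this equation: the left block forces $\sum_{a : j_a = j} \lambda_a = \tfrac1d$ for every $j \in [d]$, and the right block forces $\sum_{a : i_a = i} \lambda_a = \tfrac1d$ for every $i \in [d]$. After rescaling by $d$, this says precisely that $G$ carries a strictly positive edge weighting under which every left and every right node has total incident weight $1$ -- that is, $G$ is the support of a strictly positive fractional perfect matching, and in particular $G$ has no isolated nodes.

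The second ingredient is that $G$ is acyclic. Since $\tau$ is a simplex, its vertices are affinely independent; a cycle $\lno_{j_1}, \rno_{i_1}, \lno_{j_2}, \rno_{i_2}, \ldots$ in $G$ would yield an alternating $\pm 1$ relation among the corresponding vertices $(\unit{j},\unit{i})$ summing to zero, contradicting affine independence. Hence $G$ is a spanning forest of the complete bipartite graph on $\lnoset \sqcup \rnoset$, where now $|\lnoset| = |\rnoset| = d$.

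It then remains to show that a spanning forest supporting a strictly positive fractional perfect matching must itself be a perfect matching, and I would argue componentwise. Double-counting the edge weights of a single tree component $T$ against its two sides shows $|\lnoset \cap T| = |\rnoset \cap T|$, so each component is balanced and in particular has at least two nodes. Now I would run a leaf argument: choose a leaf $v$ of $T$ with unique edge $e$; the weight constraint at $v$ forces the weight of $e$ to be $1$, and the weight constraint at the other endpoint of $e$ then forces every other edge there to have weight $0$, which by strict positivity is impossible unless that endpoint is also a leaf. Thus both endpoints of $e$ have degree one, so the connected tree $T$ is the single edge $e$. Consequently every component of $G$ is an edge, and $G$ is a perfect matching on $[d]\sqcup [d]$.

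I expect the main obstacle to be the bookkeeping of the first paragraph: correctly matching the polyhedral notion of the minimal cell containing $g$ with the combinatorial object $G$, and verifying that $g$ lies in the \emph{relative} interior, so that all coefficients $\lambda_a$ are strictly positive and $G$ is exactly the support rather than merely a supergraph of it. Once the problem is recast as ``an acyclic strictly positive fractional perfect matching is a matching,'' the leaf argument is routine; note that acyclicity is essential here, since without it a balanced even cycle with uniform weights would already give a counterexample.
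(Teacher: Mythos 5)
Your proposal is correct and follows essentially the same route as the paper's proof: the minimal cell gives strictly positive coefficients, $G$ is a forest, and a leaf forces its edge to carry weight $\frac{1}{d}$, which forces the opposite endpoint to be a leaf as well, so every component is a single edge. The only cosmetic differences are that you derive acyclicity from the affine independence of the simplex's vertices rather than from condition (1) of Proposition~\ref{prop:char-triang}, and you phrase the final step componentwise instead of as an induction.
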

\begin{proof}
By the first condition of Proposition \ref{prop:char-triang}, the bipartite graph $G$ is a subgraph of a tree, and therefore is a forest.
	Each point in the cell corresponding to $G$ is a unique convex combination of its vertices, in particular $g$.
	Define $\lambda$ to be the weight function which assigns to each edge of $G$ its coefficient in the representation of $g$.
	By minimality, $\lambda \neq 0$ for all the edges.
The graph $G$ contains a node of degree $1$ as it is a forest.
	The weight of the incident edge $e$ has to be $\frac{1}{d}$ as it determines the coordinate corresponding to the node of degree $1$.
	Therefore, the other node incident with $e$ has degree $1$ as well.
	By induction, this implies the claim. 
\end{proof}

Iterating the construction of Proposition \ref{prop:existence-matching} over all faces of $\Dprod{n-1}{d-1}$ of the form
\[
 \conv\{\unit{j} \mid j \in \sigma \} \times \Delta_{d-1} \mbox{ for } \sigma \in \binom{[n]}{d}
\]
produces a matching field.

\subsection{Compatibility, Trees and Topes}

	Arbitrary matching fields have very little structure, hence we shall study properties of matching fields which occur in connection to polyhedral constructions.
	The third condition in Proposition~\ref{prop:char-triang} motivates the following notion which was coined in the context of tropical oriented matroids~\cite{ArdilaDevelin:2009}.

\begin{definition}
  Two forests $F_1$ and $F_2$ on the same node set $\lnoset \sqcup \rnoset$ are \emph{compatible} if for all subsets $J \subseteq \lnoset$ and $I \subseteq \rnoset$ such that $F_1$ and $F_2$ contain perfect matchings on $J \sqcup I$, those perfect matchings are equal.
  Otherwise, $F_1$ and $F_2$ are \emph{incompatible}.
\end{definition}
Note that we mainly apply this definition to matchings, topes and trees.
Furthermore, the incompatibility is often certified by an alternating cycle formed by two different matching on the same node set. 
The next lemma already occurs in \cite{OhYoo-ME:2013} but we give a proof to clarify our terminology.

\begin{lemma}[{\cite[Lemma 3.7]{OhYoo-ME:2013}}] \label{lem:unique-rdv-topes}
  Let $T_1$ and $T_2$ be distinct topes defined on $\lnoset \sqcup \rnoset$. If they have the same left and right degree vector, then they are incompatible.  
\end{lemma}
\begin{proof}
  Consider the symmetric sum of the edges of $T_1$ and $T_2$. Direct the edges of $T_1$ from $\lnoset$ to $\rnoset$ and of $T_2$ conversely.
	In the resulting graph, the indegree and the outdegree of each node are equal.
	Hence, the graph contains a directed cycle.
	This consists of two different partial matchings on the same node set in $T_1$ and $T_2$. 
\end{proof}

There is an analogous statement for trees.

\begin{lemma}[{\cite[Lemma 12.8]{Postnikov:2009}}] \label{lem:degree-vectors-trees}
  Let $T_1$ and $T_2$ be two different spanning trees on $\lnoset \sqcup \rnoset$. If they have the same left degree vector or the same right degree vector then they are incompatible.
\end{lemma}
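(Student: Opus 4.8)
The plan is to mirror the strategy of Lemma~\ref{lem:unique-rdv-topes} but to exploit the fact that spanning trees, unlike general topes, have a fixed number of edges equal to $n+d-1$. Suppose $T_1$ and $T_2$ are distinct spanning trees on $\lnoset \sqcup \rnoset$ that share the same left degree vector (the right-degree case is symmetric, by swapping the roles of $\lnoset$ and $\rnoset$). First I would form the symmetric difference of their edge sets and orient the edges of $T_1$ from $\lnoset$ to $\rnoset$ and those of $T_2$ from $\rnoset$ to $\lnoset$, exactly as in the proof of Lemma~\ref{lem:unique-rdv-topes}. The key observation is that equality of the left degree vectors forces, at each left node $\ell_j$, the number of $T_1$-edges incident to $\ell_j$ to equal the number of $T_2$-edges incident to $\ell_j$; hence in the oriented symmetric-difference graph the indegree and outdegree of every \emph{left} node coincide.

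The main obstacle, and the point where the tree argument genuinely differs from the tope argument, is that equal left degrees alone do \emph{not} immediately give balanced in/outdegrees at the \emph{right} nodes. In Lemma~\ref{lem:unique-rdv-topes} both degree vectors were assumed equal, so every node was balanced and a directed cycle appeared automatically. Here I only control the left side, so I need a separate reason to produce a cycle. The way around this is to use that both $T_1$ and $T_2$ have exactly $n+d-1$ edges and are connected and acyclic. If $T_1$ and $T_2$ had no cycle in their symmetric difference, then the undirected multigraph $T_1 \cup T_2$ would be a forest on the edge sets in question; but counting shows that two distinct spanning trees on the same node set must share an undirected cycle in their symmetric difference, since $|E(T_1)\,\triangle\,E(T_2)| \geq 2$ and any such symmetric difference between two spanning trees decomposes into cycles and paths whose endpoints are precisely the nodes whose degrees differ.

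This is where the hypothesis does its work: because the left degree vectors agree, no node of $\lnoset$ can be an endpoint of one of these paths, so every path in the symmetric difference has both endpoints in $\rnoset$. I would then argue that the existence of at least one even closed structure is forced. Concretely, take the edge-symmetric difference and note it is a nonempty even subgraph with respect to the left nodes; following an alternating walk that strictly alternates between $T_1$- and $T_2$-edges and using balance at the left nodes, one can never get stuck at a left node, and since the graph is finite the walk must eventually close up into an alternating cycle. That alternating cycle alternates between edges of $T_1$ and edges of $T_2$, so it decomposes into two distinct perfect matchings on the set $J \sqcup I$ of nodes it visits, where $J \subseteq \lnoset$ and $I \subseteq \rnoset$ with $|J| = |I|$.

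Finally I would invoke the definition of compatibility directly: the two matchings extracted from the alternating cycle are perfect matchings on the same node set $J \sqcup I$ contained in $T_1$ and $T_2$ respectively, yet they are not equal (their edge sets are disjoint along the cycle). By definition this witnesses that $T_1$ and $T_2$ are incompatible, completing the proof. I expect the only delicate step to be verifying cleanly that balance at the left nodes suffices to keep the alternating walk alive until it closes, rather than letting it terminate at an unbalanced right node; phrasing the walk so that it is forced to pass \emph{through} each left node it enters, combined with finiteness, is the cleanest route and avoids any explicit Euler-tour bookkeeping.
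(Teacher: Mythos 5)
The paper does not prove this lemma itself (it cites \cite[Lemma 12.8]{Postnikov:2009}), so your attempt has to be judged on its own; and it has a genuine gap at its central step. The claim that the symmetric difference of two distinct spanning trees with equal left degree vectors must contain a cycle --- equivalently, that your alternating walk ``must eventually close up'' --- is false. Take $\lnoset=\{\lno_1,\lno_2\}$, $\rnoset=\{\rno_1,\rno_2\}$, $T_1=\{(\lno_1,\rno_1),(\lno_1,\rno_2),(\lno_2,\rno_1)\}$ and $T_2=\{(\lno_1,\rno_1),(\lno_1,\rno_2),(\lno_2,\rno_2)\}$. These are distinct spanning trees with the same left degree vector $(2,1)$, yet $T_1\triangle T_2=\{(\lno_2,\rno_1),(\lno_2,\rno_2)\}$ is a single path with both endpoints in $\rnoset$ and contains no cycle. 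Your alternating walk goes $\rno_1\to\lno_2\to\rno_2$ and is then stuck: balance at the left nodes prevents the walk from dying at a left node, but nothing prevents it from dying at a right node, and that is exactly what happens. The auxiliary claims you invoke to exclude this are also false: $T_1\cup T_2$ need not be a forest when the symmetric difference is acyclic (here the union contains the $4$-cycle on $\lno_1,\rno_1,\lno_2,\rno_2$), and it is not true that two distinct spanning trees always have a cycle inside their symmetric difference.

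The same example shows what a correct proof must do. The only incompatibility witness here is the pair of perfect matchings $\{(\lno_1,\rno_2),(\lno_2,\rno_1)\}\subseteq T_1$ and $\{(\lno_1,\rno_1),(\lno_2,\rno_2)\}\subseteq T_2$ on all of $\lnoset\sqcup\rnoset$, and it necessarily uses the common edges $(\lno_1,\rno_1)$ and $(\lno_1,\rno_2)$. So the alternating structure cannot be sought inside $T_1\triangle T_2$ alone; the edges of $T_1\cap T_2$ must be allowed to participate, playing the role of a $T_1$-edge and of a $T_2$-edge simultaneously. One way to repair the argument is to pass to the directed multigraph in which every edge of $T_1$ is an arc from $\lnoset$ to $\rnoset$ and every edge of $T_2$ an arc from $\rnoset$ to $\lnoset$, with common edges contributing both arcs, or to contract the components of $T_1\cap T_2$ as in the paper's proof of Lemma~\ref{lem:abstract-containment-sector}. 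In either setting the remaining work is to show that a directed cycle of length at least four exists (the length-two cycles coming from common edges certify nothing); that existence is the real content of the cited lemma and is not supplied by degree balance at the left nodes together with finiteness.
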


The next statement introduces a simple but crucial construction of topes with prescribed degree vector from a tree. 

\begin{proposition} \label{prop:unique+contained+tope}
  Let $G$ be a spanning tree on $\lnoset \sqcup \rnoset$ with right degree vector $v = (v_1, \ldots, v_d)$.
  For each $\rno_{k} \in \rnoset$, there is a unique maximal tope with right degree vector $v - \unit{[d] \setminus \{k\}}$ contained in $G$.
\end{proposition}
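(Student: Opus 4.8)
The plan is to prove existence by an explicit rooting construction and uniqueness by combining Lemma~\ref{lem:unique-rdv-topes} with the observation that any two subgraphs of a tree are automatically compatible.

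For existence, I would root the spanning tree $G$ at the right node $\rno_k$. Since $G$ is bipartite and the root is a right node, every left node is non-root and hence has a well-defined parent, which is necessarily a right node. Let $T$ be the subgraph consisting of the parent edge of each left node, i.e.\ the edges $(\lno_j,\rno_i)$ of $G$ for which $\rno_i$ lies above $\lno_j$ in the rooted tree. By construction every left node has degree exactly $1$ in $T$, so $T$ has left degree vector $\unit{[n]}$ and is a maximal tope. The edges of $G$ discarded in passing to $T$ are precisely the parent edges of the right nodes; the root $\rno_k$ has no parent edge while every other right node $\rno_i$ has exactly one, so the degree of $\rno_k$ is unchanged and the degree of each $\rno_i$ with $i \neq k$ drops by one. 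Hence the right degree vector of $T$ is $v - \unit{[d]\setminus\{k\}}$, as required.

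For uniqueness, I would first observe that any two subgraphs of a tree are compatible. Indeed, if subgraphs $T_1,T_2 \subseteq G$ both contain perfect matchings on some $J \sqcup I$, then both matchings are perfect matchings on $J \sqcup I$ using only edges of the forest $G$; since the symmetric difference of two perfect matchings on the same node set is a disjoint union of even cycles, and a forest is acyclic, the two matchings coincide. Now suppose $T$ and $T'$ are maximal topes contained in $G$, both with right degree vector $v - \unit{[d]\setminus\{k\}}$. Being maximal, they share the left degree vector $\unit{[n]}$, and by the previous observation they are compatible. Lemma~\ref{lem:unique-rdv-topes} states that distinct topes with equal left and right degree vectors are incompatible, so $T$ and $T'$ cannot be distinct, giving $T = T'$.

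The construction and the degree bookkeeping are routine; the one genuinely useful observation is that containment in the tree $G$ forces compatibility, which is exactly what turns Lemma~\ref{lem:unique-rdv-topes} into a uniqueness statement. The main point to be careful about is the degenerate case where $v_i = 1$ for some $i \neq k$: then the target right degree vector has a zero entry and $\rno_i$ becomes an isolated right node, a tope lying `at the boundary' in the sense discussed after the definition of topes. The rooting argument still produces exactly this graph, and the uniqueness argument is entirely unaffected, so only the interpretation of the output as a (possibly boundary) tope requires comment.
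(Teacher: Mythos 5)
Your proof is correct and takes essentially the same route as the paper: your rooted-tree construction (keeping each left node's parent edge, equivalently discarding the parent edge of every right node other than $\rno_k$) is exactly the paper's ``remove the last edge of each path from $\rno_k$ to another right node,'' and your uniqueness argument via Lemma~\ref{lem:unique-rdv-topes} together with the acyclicity of $G$ is the same contradiction the paper draws. Your closing remark on the degenerate case $v_i = 1$ (a right node becoming isolated) is a fair point the paper's proof passes over silently, but it does not change the argument.
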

\begin{proof}
  For each path from $\rno_{k}$ to another node in $\rnoset$ remove the last edge in that path.
  The resulting graph is the desired tope.
  To show uniqueness, suppose there is another tope with right degree vector $v - \unit{[d] \setminus \{k\}}$ contained in $G$.
  By Lemma \ref{lem:unique-rdv-topes}, the two topes are incompatible.
  This gives a contradiction as the union of the topes contains a cycle and $G$ does not.
\end{proof}

\begin{corollary} \label{cor:neighbour-trees}
  Let $T_1$ and $T_2$ be two compatible spanning trees where the first has right degree vector $v$ and the second $v + \unit{p} - \unit{q}$.
  Then their intersection contains a maximal tope with right degree vector $v - \unit{[d] \setminus \{p\}}$.
	Furthermore, if $\lno_s$ has degree 1 in both $T_1$ and $T_2$ then the trees agree on the edge adjacent to $\lno_s$.
\end{corollary}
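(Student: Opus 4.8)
The plan is to realise the required tope as the unique maximal tope of Proposition~\ref{prop:unique+contained+tope} inside \emph{each} of the two trees, and then to use compatibility to identify the two copies.

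First I would pin down which special right node to feed into Proposition~\ref{prop:unique+contained+tope} for each tree. Applying it to $T_1$ with special node $\rno_p$ produces a unique maximal tope $M_1 \subseteq T_1$ of right degree vector $v - \unit{[d] \setminus \{p\}}$. For $T_2$, whose right degree vector is $w := v + \unit{p} - \unit{q}$, I would instead take the special node to be $\rno_q$; the arithmetic
\[
  w - \unit{[d] \setminus \{q\}} = (v + \unit{p} - \unit{q}) - (\1 - \unit{q}) = v + \unit{p} - \1 = v - \unit{[d] \setminus \{p\}}
\]
shows that the resulting maximal tope $M_2 \subseteq T_2$ carries the same right degree vector as $M_1$. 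Getting this index bookkeeping right — seeing that the surplus edge at $\rno_p$ in $T_2$ is exactly offset by choosing $\rno_q$ rather than $\rno_p$ as the special node — is the place I expect to have to be most careful.

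Next I would show $M_1 = M_2$. Both are maximal topes, hence both have left degree vector $\unit{[n]}$, and by the computation above they share the right degree vector $v - \unit{[d] \setminus \{p\}}$. The key point is that compatibility passes from the trees to the contained topes: whenever $M_1$ and $M_2$ both contain perfect matchings on a common $J \sqcup I$, these are perfect matchings on $J \sqcup I$ contained in the forests $T_1$ and $T_2$ respectively, and hence coincide by the compatibility of $T_1$ and $T_2$; thus $M_1$ and $M_2$ are compatible. By Lemma~\ref{lem:unique-rdv-topes}, two distinct topes with equal left and right degree vectors are incompatible, whence $M_1 = M_2$. This common tope lies in $T_1 \cap T_2$ and has right degree vector $v - \unit{[d] \setminus \{p\}}$, giving the first assertion; I regard this transfer of compatibility as the conceptual crux of the argument.

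For the final sentence I would exploit that the common maximal tope $M := M_1 = M_2$ has every left node of degree exactly $1$, so $\lno_s$ is incident to a single edge $e$ of $M$. Since $M \subseteq T_1$ and $\lno_s$ has degree $1$ in $T_1$, the edge $e$ is forced to be the unique $T_1$-edge at $\lno_s$; the same reasoning with $T_2$ shows $e$ is the unique $T_2$-edge at $\lno_s$. Hence $T_1$ and $T_2$ carry the same edge at $\lno_s$, as claimed.
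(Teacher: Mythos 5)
Your proposal is correct and follows essentially the same route as the paper: extract the tope of right degree vector $v - \unit{[d]\setminus\{p\}}$ from each tree via Proposition~\ref{prop:unique+contained+tope}, identify the two copies using Lemma~\ref{lem:unique-rdv-topes} together with the compatibility inherited from $T_1$ and $T_2$, and read off the edge at $\lno_s$ from the common maximal tope. The only difference is that you spell out the index bookkeeping (taking $\rno_q$ as the special node for $T_2$) that the paper leaves implicit.
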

\begin{proof}
  Proposition~\ref{prop:unique+contained+tope} ensures that both trees contain a tope with right degree vector $v - \unit{[d] \setminus \{p\}}$.
  Those topes agree as a consequence of Lemma~\ref{lem:unique-rdv-topes} because of the compatibility of $T_1$ and $T_2$.
	If $\lno_s$ has degree 1 in both trees, the edge adjacent to it must be contained in the unique tope in their intersection.
\end{proof}

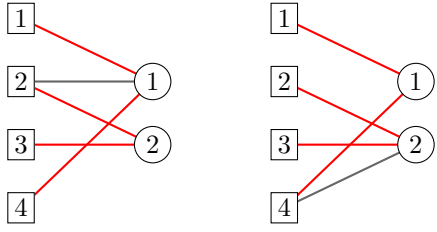
\begin{figure}%
\begin{tikzpicture}[scale=0.7]
	\bigraphfourtwocoord{0}{0}{1.2}{0.6}{2.5};
\draw[EdgeStyle,red] (v1) to (w1);
\draw[EdgeStyle] (v2) to (w1);
\draw[EdgeStyle,red] (v4) to (w1);
\draw[EdgeStyle,red] (v2) to (w2);
\draw[EdgeStyle,red] (v3) to (w2);
\bigraphfourtwonodes

\bigraphfourtwocoord{5}{0}{1.2}{0.6}{2.5};
\draw[EdgeStyle,red] (v1) to (w1);
\draw[EdgeStyle,red] (v4) to (w1);
\draw[EdgeStyle] (v4) to (w2);
\draw[EdgeStyle,red] (v2) to (w2);
\draw[EdgeStyle,red] (v3) to (w2);
\bigraphfourtwonodes
\end{tikzpicture}%
\caption{Two compatible trees with right degree vectors $(3,2)$ and $(2,3)$ respectively. The common (red) edges form a tope with right degree vector $(2,2)$.}%
\label{fig:tree+intersection}%
\end{figure}

Proposition \ref{prop:unique+contained+tope} and Corollary \ref{cor:neighbour-trees} emphasise the structural relationship between topes and trees, in particular how to recover one from the other.
Figure \ref{fig:tree+intersection} shows an example of recovering topes from intersections of trees.
However, as we shall later see, we can take unions of topes to recover trees.

We can build on these results to find even stronger local conditions on compatible trees.
The following lemma captures a combinatorial analogue of certain geometric properties explained in Example~\ref{ex:hyperplane+sector}.

\begin{lemma} \label{lem:abstract-containment-sector}
  Let $T_1$ and $T_2$ be two compatible spanning trees on $\lnoset \sqcup \rnoset$ with right degree vectors $v$ and $v + \unit{p} - \unit{q}$ respectively.
  If $(\lno_s,\rno_p)$ is an edge in $T_1$ then it is also an edge in $T_2$.
  Furthermore, the degree $\delta_1$ of $\lno_s$ in $T_1$ is bigger or equal to its degree $\delta_2$ in $T_2$.
\end{lemma}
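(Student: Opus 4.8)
The plan is to treat the two assertions separately, using the common maximal tope guaranteed by Corollary~\ref{cor:neighbour-trees} as the main device. Write $v = (v_1,\dots,v_d)$ and let $M$ be the maximal tope contained in $T_1 \cap T_2$ with right degree vector $v - \unit{[d]\setminus\{p\}}$. The key observation for the first assertion is that $M$ and $T_1$ have the same degree $v_p$ at $\rno_p$; since $M \subseteq T_1$, every edge of $T_1$ incident to $\rno_p$ must already lie in $M$. In particular $(\lno_s,\rno_p) \in M \subseteq T_2$, which is exactly the claim. I would also record here that $\lno_s$ has degree $1$ in $M$ (as $M$ is maximal, its left degree vector is $\unit{[n]}$), so $(\lno_s,\rno_p)$ is the unique edge of $M$ at $\lno_s$, a fact I will reuse when closing cycles.

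For the degree inequality I would pass to the symmetric difference $H = T_1 \triangle T_2$, oriented by directing edges of $T_1 \setminus T_2$ from $\lnoset$ to $\rnoset$ and edges of $T_2 \setminus T_1$ from $\rnoset$ to $\lnoset$. Comparing degrees node by node, the excess of out- over in-degree equals $\deg_{T_1}(\cdot) - \deg_{T_2}(\cdot)$ at left nodes and $\deg_{T_2}(\cdot)-\deg_{T_1}(\cdot)$ at right nodes; thus $\rno_p$ is a source with excess $+1$, $\rno_q$ a sink with excess $-1$, every other right node is balanced, and the quantity I must control is $\delta_1 - \delta_2$, the out-minus-in excess at $\lno_s$. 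The point of the orientation is that any directed cycle in $H$ is an alternating cycle whose two perfect matchings lie one in $T_1$ and one in $T_2$ and are distinct; by compatibility (the same alternating-cycle mechanism used in Lemma~\ref{lem:unique-rdv-topes}) no such cycle can exist, so $H$ is acyclic. It therefore suffices to prove that $\lno_s$ is not a net sink, that is, $\delta_1 \geq \delta_2$.

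To see this I would argue by contradiction. If $\delta_2 > \delta_1$ then $\lno_s$ has an incoming edge in $H$, i.e. an edge $(\lno_s,\rno_b) \in T_2 \setminus T_1$ with $b \neq p$ (and in fact $b \neq q$). I would then build an alternating path in $T_1 \cup T_2$ terminating at $\lno_s$ along this edge, trace it backwards, and close it into a cycle using the common edge $(\lno_s,\rno_p) \in M$ together with the unique extra edge of $T_2$ at $\rno_p$. Because $\rno_p$ carries exactly one more edge in $T_2$ than in $T_1$, this closing edge is forced; the resulting cycle is non-trivial and has its two matchings in $T_1$ and $T_2$ respectively, contradicting compatibility. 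The degenerate subcase $\delta_1 = 1$ (so $\lno_s$ is a leaf of $T_1$ attached to $\rno_p$) fits the same template.

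The main obstacle is precisely this closure step. The backward alternating trail from $\lno_s$ need not terminate at $\rno_p$ by itself: a priori it can run back to some other source node of $H$, in which case the naive closing cycle fails to have both matchings in the correct trees. The delicate part is to show that, by using the edges of $M$ to jump between the connected components (stars) of $M$, the trail can always be routed so as to close through $\rno_p$ — equivalently, that the right node $\rno_b$ is reachable from $\rno_p$ in the oriented union. Establishing this reachability, which is exactly where the asymmetry between $\rno_p$ (whose degree increases) and $\rno_q$ (whose degree decreases) enters decisively, is the crux; the remaining bookkeeping — parity of the cycle length, distinctness of the two matchings, and the separation of the common $M$-edges from the edges of $H$ — is routine.
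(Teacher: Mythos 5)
Your treatment of the first claim is correct and coincides with the paper's: the common tope of Corollary~\ref{cor:neighbour-trees} already has degree $v_p$ at $\rno_p$, so every edge of $T_1$ at $\rno_p$ lies in $T_1\cap T_2$, and in particular $(\lno_s,\rno_p)\in T_2$.

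The degree inequality, however, is not proved. You set up the oriented symmetric difference, correctly note that compatibility forbids directed (alternating) cycles as in Lemma~\ref{lem:unique-rdv-topes}, and then propose to refute $\delta_2>\delta_1$ by tracing an alternating trail backwards from the extra $T_2$-edge at $\lno_s$ and closing it through $(\lno_s,\rno_p)$. You yourself observe that the trail need not return to $\rno_p$ and that the required reachability of $\rno_b$ from $\rno_p$ ``is the crux''---and that crux is left entirely open, so the second half of the lemma is missing. The idea that closes this gap in the paper is that no routing to a designated target is needed. One works in the full union $T_1\cup T_2$, directs $T_1\setminus T_2$ from $\rnoset$ to $\lnoset$ and $T_2\setminus T_1$ from $\lnoset$ to $\rnoset$, and \emph{contracts} every edge of $T_1\cap T_2$. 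By your first claim $(\lno_s,\rno_p)$ is a common edge, so $\lno_s$ and $\rno_p$ collapse to the same node of the contracted graph $H$. Every node of $H$ not containing $\rno_p$ has an outgoing arc (a component of $T_1\cap T_2$ with no outgoing arc would have all $T_2$-edges at its left vertices and all $T_1$-edges at its right vertices internal, forcing it to be all of $\lnoset\sqcup\rnoset$ because the trees are spanning and the right degrees agree away from $\rno_p,\rno_q$), and the assumption $\delta_2>\delta_1$ supplies the one missing outgoing arc at the node containing $\rno_p$. Hence every node of $H$ has positive out-degree, a directed cycle exists by finiteness, and lifting it through the contracted components (paths in $T_1\cap T_2$) yields an alternating cycle contradicting compatibility. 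Until you either adopt this contraction argument or supply an independent proof of your reachability claim, the assertion $\delta_1\ge\delta_2$ remains unestablished.
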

\begin{proof}
  By Corollary~\ref{cor:neighbour-trees}, the intersection $T_1 \cap T_2$ contains a tope with right degree vector $v - \unit{[d] \setminus \{p\}}$.
 The first claim follows directly from this, as if $(\lno_s,\rno_p)$ is an edge in $T_1$, it is contained in this tope, and therefore $T_2$. 

For the second claim, we define an auxiliary graph $H$ as follows.
We take the union of the graphs $T_1 \cup T_2$ and make the following alterations.
We direct the edges of $T_1 \setminus T_2$ from $\rnoset$ to $\lnoset$ and direct the edges of $T_2 \setminus T_1$ from $\lnoset$ to $\rnoset$.
Finally we contract the remaining undirected edges, those in the intersection of $T_1$ and $T_2$.
Note that these undirected edges form a spanning forest on $\lnoset \sqcup \rnoset$.
The resulting graph is our auxiliary graph $H$, whose nodes correspond to connected components of $T_1 \cap T_2$.
To each node $v$, we attach the label $\phi(v) \subseteq \lnoset \sqcup \rnoset$, the subset of nodes contained in the connected component that has been contracted to that node.

As the node $r_i \in \rnoset\setminus \{r_p,r_q\}$ has the same degree in $T_1,T_2$, every node $v$ such that $\rno_p, \rno_q \notin \phi(v)$ has in-degree equal to out-degree.
This implies for every node $v$ such that $r_p \notin \phi(v)$, we have an outgoing arc.
 Assume that $\delta_2 > \delta_1$.
 Then an edge in $T_2$ incident with $\lno_s$ gives rise to an out-going arc from the node containing $\rno_p$ in $H$.
  Hence, each node in $H$ has an out-going arc which implies the existence of a cycle in this auxiliary graph.
  This however contradicts the compatibility of $T_1$ and $T_2$. 
\end{proof}

\begin{example} \label{ex:hyperplane+sector}
	The intuition behind Lemma \ref{lem:abstract-containment-sector} can be seen via arrangements of tropical hyperplanes.
	A tropical hyperplane $H$ is a translation of the normal fan of $\Delta_{d-1}$, decomposing $\RR^{d-1}$ into $d$ sectors $S^{(1)},\dots,S^{(d)}$ labelled by the vertices of $\Delta_{d-1}$.
        Develin and Sturmfels~\cite{DevelinSturmfels:2004} showed that the covectors (`types' in their terminology) labelling the cells of a tropical hyperplane arrangement also describe a regular subdivision of $\Delta_{n-1} \times \Delta_{d-1}$.
        In a generic arrangement, this yields a set of trees encoding the arrangement.
	They can be extracted from the arrangement via the zero dimensional cells: the corresponding tree contains the edge $(\lno_j,\rno_i)$ if the cell is contained in $S_j^{(i)}$, sector $i$ of hyperplane $j$.
	
	Consider the example depicted in Figure \ref{fig:hyperplane+sector}.
	The trees $T_1$ and $T_2$ are the covectors of the apexes of the corresponding hyperplanes $H_1$ and $H_2$.
	The edge $(\lno_1,\rno_1)$ in $T_1$ implies that the corresponding cell is in the sector $S_1^{(1)}$.
	As $\rno_1$ has a larger degree in $T_2$, walking to the apex of $H_2$ requires us to move further in the `$1$-direction', and therefore we do not leave $S_1^{(1)}$.
	Lemma~\ref{lem:abstract-containment-sector} is a purely combinatorial description of this behaviour that also covers the non-regular case.
	This will later motivate our definition of a combinatorial analogue of tropical hyperplane sectors.
\end{example}

\begin{figure}%
\begin{tikzpicture}
	\node[BoxVertex] (1) at (0.5,1) {1};
	\node[BoxVertex] (2) at (0,0) {2};
	\draw (2) -- (0,-1);
	\draw (2) -- (-1,0.6);
	\draw (2) -- (2,1.2);
	\draw (1) -- (0.5,-1);
	\draw (1) -- (-1,1.9);
	\draw (1) -- (2,1.9);
	
	\node[BoxVertex] (1) at (6,1) {1};
	\node[BoxVertex] (2) at (6,0) {2};
	\node[VertexStyle] (11) at (7,1.5) {1};
	\node[VertexStyle] (22) at (7,0.5) {2};
	\node[VertexStyle] (33) at (7,-0.5) {3};
	\draw (2) -- (11);
	\draw (2) -- (22);
	\draw (2) -- (33);
	\draw (1) -- (11);
	\node at (6.5,-1) {$T_2$};

	\node[BoxVertex] (1) at (4,1) {1};
	\node[BoxVertex] (2) at (4,0) {2};
	\node[VertexStyle] (11) at (5,1.5) {1};
	\node[VertexStyle] (22) at (5,0.5) {2};
	\node[VertexStyle] (33) at (5,-0.5) {3};
	\draw (1) -- (11);
	\draw (1) -- (22);
	\draw (1) -- (33);
	\draw (2) -- (33);
	\node at (4.5,-1) {$T_1$};
	
	\node[BoxVertex] (1) at (8,1) {1};
	\node[BoxVertex] (2) at (8,0) {2};
	\node[VertexStyle] (11) at (9,1.5) {1};
	\node[VertexStyle] (22) at (9,0.5) {2};
	\node[VertexStyle] (33) at (9,-0.5) {3};
	\draw (2) -- (22);
	\draw (2) -- (33);
	\draw (1) -- (11);
	\draw (1) -- (22);
	\node at (8.5,-1) {$T_3$};
\end{tikzpicture}
\caption{An arrangement of two tropical hyperplanes in $\RR^2$ and their corresponding trees.
  $T_1,T_2$ correspond to the apices of the hyperplanes, while $T_3$ is the tree corresponding to their intersection.}%
\label{fig:hyperplane+sector}%
\end{figure}
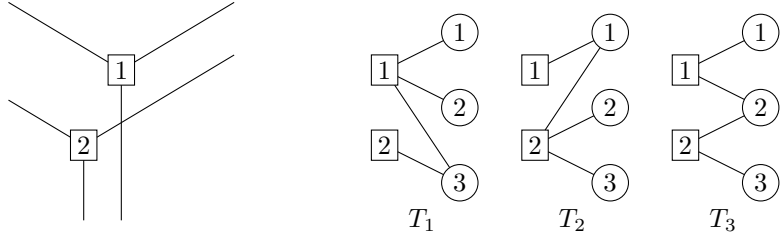

\section{Linkage matching fields} \label{sec:linkage+matching+fields}

Recall from \cite{SturmfelsZelevinsky:1993} the definition of the linkage property of a matching field.

\begin{definition} \label{def:linkage+axiom}
Let $\cM = (M_{\sigma})$ be an $(n,d)$-matching field where $n \geq d$. We say $\cM$ is \emph{linkage} if the following \emph{linkage axiom} holds: for every $\rno_i \in \rnoset$ and $(d+1)$-subset $\tau \subseteq \lnoset$ there exist two distinct $\lno_j,\lno_{j'} \in \tau$ such that the matchings $M_{\tau \setminus \{\lno_j\}}$ and $M_{\tau \setminus \{\lno_{j'}\}}$ agree everywhere other than on $\rno_i$.
\end{definition}

\begin{remark}
When considering a matching field as a set of bipartite graphs, it will be useful to differentiate between \emph{left linkage} and \emph{right linkage}. The previous definition is for left linkage as it describes how matchings on varying subsets of $\lnoset$ are linked. Right linkage is defined analogously for matchings of right matching fields, where $|\rnoset| \geq |\lnoset|$ and one ranges over all $(|\lnoset|+1)$-subsets of $\rnoset$. 
\end{remark}

Reformulating the conditions from \cite[Theorem 2.4(3)]{SturmfelsZelevinsky:1993} and \cite[Corollary 2.12]{SturmfelsZelevinsky:1993} yields the following.

  \begin{lemma} \label{lem:linkage+and+tree}
   The linkage axiom is equivalent to the following condition: 
let $\tau$ be a $(d+1)$-element subset of $\lnoset$. Then the union of all matchings on $\tau \sqcup \rnoset$  is a tree where all right nodes have degree 2.
  \end{lemma}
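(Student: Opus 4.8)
The plan is to phrase everything through the \emph{symmetric difference} of the matchings $M_{\tau\setminus\{\ell\}}$ and to treat the two implications separately. Write $U_\tau=\bigcup_{\ell\in\tau}M_{\tau\setminus\{\ell\}}$ for the union graph on $\tau\sqcup\rnoset$, where $|\tau|=d+1$ and $|\rnoset|=d$, so $U_\tau$ has $2d+1$ nodes. The basic dictionary I would record first is the following: for distinct $\ell,\ell'\in\tau$, every right node has even degree in $M_{\tau\setminus\{\ell\}}\triangle M_{\tau\setminus\{\ell'\}}$, while among the left nodes only $\ell$ and $\ell'$ have odd degree, since each of them is covered by exactly one of the two matchings. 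Hence this symmetric difference is a single alternating $\ell$-to-$\ell'$ path together with some alternating cycles, and the phrase ``$M_{\tau\setminus\{\ell\}}$ and $M_{\tau\setminus\{\ell'\}}$ agree everywhere other than on $\rno_i$'' is exactly the statement that this symmetric difference is the length-two path $\ell-\rno_i-\ell'$ with no further cycles.

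For the implication ``tree with right degrees $2$ $\Rightarrow$ linkage'' I would argue directly. Fix $\rno_i$; since $U_\tau$ is a tree in which $\rno_i$ has degree $2$, it has exactly two neighbours $\lno_a,\lno_b$, and I claim the pair $\lno_a,\lno_b$ witnesses the linkage axiom. Indeed $M_{\tau\setminus\{\lno_a\}}$ and $M_{\tau\setminus\{\lno_b\}}$ are distinct, as their left supports differ, so their symmetric difference $D$ is nonempty; since $D\subseteq U_\tau$ and $U_\tau$ is acyclic, $D$ contains no cycle, and by the dictionary it is a single $\lno_a$-to-$\lno_b$ path inside the tree. The unique path between $\lno_a$ and $\lno_b$ in the tree is $\lno_a-\rno_i-\lno_b$, so $D=\{(\lno_a,\rno_i),(\lno_b,\rno_i)\}$ and the two matchings agree off $\rno_i$, as required.

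For the converse I would first dispose of the easy structural facts and then isolate the real obstruction. Connectivity of $U_\tau$ is immediate: for any two left nodes $\ell,\ell'$ the symmetric difference contains an $\ell$-to-$\ell'$ path lying in $U_\tau$, and every right node is matched in some $M_{\tau\setminus\{\ell\}}$, hence attached to this common component. The linkage axiom applied to each $\rno_i$ produces, via the dictionary, a length-two path $\lno_{a_i}-\rno_i-\lno_{b_i}$, so every right node has at least two distinct neighbours, i.e.\ degree at least $2$. Since $U_\tau$ is connected on $2d+1$ nodes, it is a tree if and only if $|E(U_\tau)|=2d$; as $|E(U_\tau)|=\sum_i\deg(\rno_i)\ge 2d$ by the degree bound, the whole converse reduces to the reverse inequality $\deg(\rno_i)\le 2$ for every $i$, equivalently to the acyclicity of $U_\tau$ (in which case the right degrees are forced to be exactly $2$).

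Proving this acyclicity is the main obstacle, and it is precisely the point where one must use the linkage instances at \emph{all} right nodes simultaneously rather than one at a time, since a configuration satisfying the linkage condition at a single $\rno_i$ can easily contain a cycle. I would prove it by induction on $d$. The base case $d=1$ gives the path $\ell-\rno_1-\ell'$. For the step, I would use the linkage pair $\lno_p,\lno_q$ at $\rno_d$, whose common part $M_{\tau\setminus\{\lno_p\}}\cap M_{\tau\setminus\{\lno_q\}}$ is a perfect matching of $\tau\setminus\{\lno_p,\lno_q\}$ onto $\rnoset\setminus\{\rno_d\}$, and then contract $\lno_p,\lno_q$ to a single left node and delete $\rno_d$ to obtain a configuration with parameter $d-1$. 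The crux is to verify that this contracted system is again a linkage matching field and that $U_\tau$ is recovered from the inductively tree-shaped union by re-expanding the contracted node into the length-two path $\lno_p-\rno_d-\lno_q$, which preserves the tree property and gives $\rno_d$ degree $2$. Alternatively, one may match these two combinatorial formulations directly against the cited conditions \cite[Theorem~2.4(3)]{SturmfelsZelevinsky:1993} and \cite[Corollary~2.12]{SturmfelsZelevinsky:1993}, of which the lemma is a reformulation.
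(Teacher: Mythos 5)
Your forward direction (tree with all right degrees $2$ $\Rightarrow$ linkage) is complete and correct: the symmetric-difference dictionary, acyclicity forcing the difference to be a single path, and uniqueness of paths in a tree pinning that path to $\lno_a-\rno_i-\lno_b$ is exactly the right argument. Your reduction of the converse to the single inequality $\deg_{U_\tau}(\rno_i)\le 2$ is also sound (connectivity, the lower bound on right degrees, and the edge count over $2d+1$ nodes are all fine). For calibration: the paper gives no proof of this lemma at all — it records it as a reformulation of \cite[Theorem~2.4(3)]{SturmfelsZelevinsky:1993} and \cite[Corollary~2.12]{SturmfelsZelevinsky:1993} — so your closing fallback coincides with what the authors actually do.

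The genuine gap is that the step you flag as ``the crux'' is not merely unverified but circular as set up. To contract $\lno_p,\lno_q$ to a single left node $*$ and delete $\rno_d$, you need each $M_{\tau\setminus\{\ell\}}$ with $\ell\notin\{\lno_p,\lno_q\}$ to become, after the operation, a perfect matching of $(\tau'\setminus\{\ell\})\sqcup(\rnoset\setminus\{\rno_d\})$. This holds only if the left node matched to $\rno_d$ in $M_{\tau\setminus\{\ell\}}$ lies in $\{\lno_p,\lno_q\}$; otherwise deleting $\rno_d$ strands a left node while both $\lno_p$ and $\lno_q$ remain matched into $\rnoset\setminus\{\rno_d\}$, so the identified node $*$ gets degree $2$ and you do not obtain a matching. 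But ``every matching of the family meets $\rno_d$ inside $\{\lno_p,\lno_q\}$'' is precisely the statement $\deg_{U_\tau}(\rno_d)=2$ that the induction is meant to establish, and the one linkage instance you have invoked (the pair at $\rno_d$ for the single set $\tau$) does not yield it — as you yourself observe, a single linkage instance is compatible with a cycle. So the inductive step presupposes its conclusion, and the converse direction remains unproved. The route the citation takes is different in kind: one works with the linkage tree on the node set $\tau$ (edges $\{j,j'\}$ labelled $i$ when $M_{\tau\setminus\{j\}}$ and $M_{\tau\setminus\{j'\}}$ agree off $\rno_i$), shows it is a tree in which each label $i\in[d]$ occurs exactly once, and then passes to the degree-two tree on $\tau\sqcup\rnoset$ by the substitution described in the paper's remark following Definition~\ref{def:linkage+covector}; some argument of that global, all-labels-at-once flavour is what is missing from your proposal.
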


Note that the formerly described trees can also be characterised in terms of `support sets' in the sense of \cite[Proposition 2.6]{FinkRincon:2015}.
This yields another description of matching linkage covectors, see Definition~\ref{def:linkage+covector}, based on degree conditions.

The linkage axiom can be formulated in even more ways, the following of which we take advantage of:

\begin{lemma}\cite[Theorem 2b]{BernsteinZelevinsky:1993} \label{lem:subset+linkage}
The linkage axiom is equivalent to the following property: for every two distinct $d$-subsets $\sigma,\sigma' \subset \lnoset$ there exists $\lno_{j'} \in \sigma' \setminus \sigma$ such that if $(\lno_{j'},\rno_i)$ is an edge of $M_{\sigma'}$ and $(\lno_j,\rno_i)$ is an edge of $M_{\sigma}$ then the matchings $M_{\sigma}$ and $M_{\sigma \setminus \{\lno_j\} \cup \{\lno_{j'}\}}$ agree everywhere other than on $\rno_i$.
\end{lemma}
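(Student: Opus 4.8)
The plan is to prove both implications through the tree description of Lemma~\ref{lem:linkage+and+tree}. First I would record the following dichotomy, valid for any $(d+1)$-subset $\tau \subseteq \lnoset$ when $\cM$ is linkage. Write $T_\tau$ for the union of the matchings $M_{\tau \setminus \{\lno\}}$, $\lno \in \tau$; by Lemma~\ref{lem:linkage+and+tree} this is a spanning tree on $\tau \sqcup \rnoset$ in which every right node has degree $2$. Each $M_{\tau \setminus \{\lno\}}$ is then the unique near-perfect matching of $T_\tau$ missing $\lno$, and for distinct $\lno, \lno' \in \tau$ the symmetric difference $M_{\tau \setminus \{\lno\}} \,\triangle\, M_{\tau \setminus \{\lno'\}}$ has $\lno, \lno'$ as its only degree-one nodes; being a subgraph of the acyclic $T_\tau$ it is exactly the unique $\lno$--$\lno'$ path in $T_\tau$. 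Consequently $M_{\tau \setminus \{\lno\}}$ and $M_{\tau \setminus \{\lno'\}}$ agree off a single right node $\rno_i$ if and only if that path is $\lno - \rno_i - \lno'$, that is, if and only if $\lno$ and $\lno'$ are the two neighbours of $\rno_i$ in $T_\tau$. This is the combinatorial engine behind both directions, a mild elaboration of Lemma~\ref{lem:unique-rdv-topes} and Proposition~\ref{prop:unique+contained+tope}.

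For the implication from the displayed exchange property to the linkage axiom I would argue directly. Fix $\rno_i \in \rnoset$ and a $(d+1)$-subset $\tau$; I must produce two nodes of $\tau$ whose matchings agree off $\rno_i$. Choose any $\lno_b \in \tau$, set $\sigma' = \tau \setminus \{\lno_b\}$, and let $\lno_a \in \sigma'$ be the partner of $\rno_i$ in $M_{\sigma'}$ (it exists since $M_{\sigma'}$ covers $\rnoset$). Put $\sigma = \tau \setminus \{\lno_a\}$, so that $\sigma' \setminus \sigma = \{\lno_a\}$ and the only admissible choice in the exchange property applied to $(\sigma,\sigma')$ is $\lno_{j'} = \lno_a$, whose $M_{\sigma'}$-partner is exactly $\rno_i$. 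Writing $\lno_j$ for the $M_\sigma$-partner of $\rno_i$, the property yields that $M_\sigma = M_{\tau \setminus \{\lno_a\}}$ and $M_{\sigma \setminus \{\lno_j\} \cup \{\lno_a\}} = M_{\tau \setminus \{\lno_j\}}$ agree off $\rno_i$; since $\lno_j \in \sigma$ we have $\lno_j \neq \lno_a$, so $(\lno_a,\lno_j)$ is the required linked pair.

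The reverse implication is the substantial one, and I would prove it by induction on $m = |\sigma \setminus \sigma'| = |\sigma' \setminus \sigma|$. In the base case $m = 1$ we have $\sigma' = \sigma \setminus \{\lno_j\} \cup \{\lno_{j'}\}$ with $\lno_{j'}$ the unique element of $\sigma' \setminus \sigma$. Taking $\tau = \sigma \cup \{\lno_{j'}\} = \sigma' \cup \{\lno_j\}$ puts both $M_\sigma$ and $M_{\sigma'}$ inside $T_\tau$; the hypotheses $(\lno_{j'},\rno_i) \in M_{\sigma'}$ and $(\lno_j,\rno_i) \in M_\sigma$ say precisely that $\lno_{j'}$ and $\lno_j$ are the neighbours of $\rno_i$ in $T_\tau$, so by the dichotomy $M_\sigma$ and $M_{\sigma'} = M_{\sigma \setminus \{\lno_j\} \cup \{\lno_{j'}\}}$ agree off $\rno_i$, as needed.

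For $m \geq 2$ the task is to locate a single $\lno_{j'} \in \sigma' \setminus \sigma$ that already witnesses the property for $(\sigma,\sigma')$: equivalently, by the dichotomy applied to $\tau = \sigma \cup \{\lno_{j'}\}$, one for which $\lno_{j'}$ is adjacent in $T_{\sigma \cup \{\lno_{j'}\}}$ to its own $M_{\sigma'}$-partner $\rno_i$. Here lies the main obstacle: $M_{\sigma'}$ need not lie inside any single local tree $T_{\sigma \cup \{\lno_{j'}\}}$ (this held automatically only because $\sigma' \subseteq \tau$ when $m=1$), so the datum ``$\rno_i$ is the partner of $\lno_{j'}$ in $M_{\sigma'}$'' must be transported into that tree. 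I would do this by an extremal choice: consider the alternating paths of $M_\sigma \,\triangle\, M_{\sigma'}$, which pair $\sigma' \setminus \sigma$ with $\sigma \setminus \sigma'$, select $\lno_{j'}$ minimising a suitable length along these paths, and show that a clean swap at $\rno_i$ is forced, since otherwise the two neighbours of $\rno_i$ in $T_{\sigma \cup \{\lno_{j'}\}}$ together with an $M_{\sigma'}$-edge would close an alternating cycle, contradicting compatibility as in Lemma~\ref{lem:unique-rdv-topes}. Once one such clean swap toward $\sigma'$ is produced it both witnesses the property for $(\sigma,\sigma')$ and reduces the distance to $\sigma'$, so the induction closes. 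The delicate point throughout is exactly this transport-and-cycle-exclusion step, which is the content handled in \cite[Theorem~2]{BernsteinZelevinsky:1993}, whose proof supplies the remaining bookkeeping.
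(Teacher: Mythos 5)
The paper gives no proof of this lemma at all --- it is imported wholesale from \cite[Theorem~2b]{BernsteinZelevinsky:1993} --- so the only question is whether your argument stands on its own, and it does not quite. What you do prove is correct and nicely organised: the ``dichotomy'' (inside a linkage tree $T_\tau$ the symmetric difference of $M_{\tau\setminus\{\lno\}}$ and $M_{\tau\setminus\{\lno'\}}$ is the unique $\lno$--$\lno'$ path, so the two matchings agree off a single $\rno_i$ exactly when $\lno,\lno'$ are the two neighbours of $\rno_i$) is sound, the implication from the exchange property to the linkage axiom is a complete and correct direct verification, and the base case $|\sigma'\setminus\sigma|=1$ of the converse follows cleanly from the dichotomy.

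The genuine gap is the inductive step of the converse for $|\sigma'\setminus\sigma|\ge 2$, which is the entire content of the hard direction. You correctly isolate what must be shown --- some $\lno_{j'}\in\sigma'\setminus\sigma$ whose $M_{\sigma'}$-partner $\rno_i$ is a neighbour of $\lno_{j'}$ in the local tree $T_{\sigma\cup\{\lno_{j'}\}}$ --- but the proposed extremal argument is never executed: the ``suitable length'' being minimised is unspecified, the claim that ``a clean swap at $\rno_i$ is forced'' is asserted rather than derived, and you close by conceding that the proof in \cite{BernsteinZelevinsky:1993} ``supplies the remaining bookkeeping.'' Two further cautions if you try to complete it. First, the cycle-exclusion you invoke needs $M_\sigma$ and $M_{\sigma'}$ to be compatible, but in this paper weak compatibility (Proposition~\ref{prop:weak-compatibility}) is deduced \emph{from} Lemma~\ref{lem:subset+linkage}, so you must derive whatever compatibility you use directly from Definition~\ref{def:linkage+axiom} to avoid circularity. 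Second, the induction as described is not well-posed: the exchange property for $(\sigma,\sigma')$ asks for a single witnessing $\lno_{j'}$, so once the ``clean swap'' exists you are already done and no induction is needed, whereas if the swap's existence itself is what requires induction, you have not said on which pair of sets the inductive hypothesis is being applied.
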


The linkage property implies the compatibility of the occurring matchings as the next statement shows.

\begin{proposition}[Weak compatibility] \label{prop:weak-compatibility}
  Let $\cM$ be a linkage matching field of type $(n,d)$ with $n \geq d$ and let $\sigma, \sigma'$ be two $d$-element subsets of $\lnoset$. If there are subsets $P \subseteq \sigma \cap \sigma'$ and $Q \subseteq \rnoset$ with $|P| = |Q|$ such that $M_{\sigma}|_{P \sqcup Q}$ and $M_{\sigma'}|_{P \sqcup Q}$ are perfect matchings then those matchings agree.
\end{proposition}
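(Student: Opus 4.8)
The plan is to induct on the potential
$\Phi(\sigma,\sigma') = \bigl|\{\rno \in \rnoset : \text{the } M_\sigma\text{-partner of }\rno \text{ differs from its } M_{\sigma'}\text{-partner}\}\bigr|$,
the number of right nodes on which the two (perfect) matchings disagree, using the one-step exchange of Lemma~\ref{lem:subset+linkage} as the engine. In the base case $\Phi = 0$, both matchings send every right node to the same left node, so $\sigma = \sigma'$ and $M_\sigma = M_{\sigma'}$, whence the restrictions trivially agree; note also that $\Phi \geq 1$ forces $\sigma \neq \sigma'$, which is exactly what is needed for Lemma~\ref{lem:subset+linkage} to apply.

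For the inductive step, I would apply Lemma~\ref{lem:subset+linkage} to obtain $\lno_{j'} \in \sigma' \setminus \sigma$; let $\rno_i$ be the right node matched to $\lno_{j'}$ in $M_{\sigma'}$ and let $\lno_j$ be the left node matched to $\rno_i$ in $M_\sigma$, and set $\sigma'' = \sigma \setminus \{\lno_j\} \cup \{\lno_{j'}\}$. The lemma guarantees that $M_{\sigma''}$ agrees with $M_\sigma$ away from $\rno_i$ and sends $\rno_i$ to $\lno_{j'}$. The crucial point is that this exchange never disturbs the block $P \sqcup Q$. Indeed, if $\rno_i \in Q$, then since $M_{\sigma'}|_{P \sqcup Q}$ is a perfect matching the $M_{\sigma'}$-partner $\lno_{j'}$ of $\rno_i$ would lie in $P \subseteq \sigma$, contradicting $\lno_{j'} \in \sigma' \setminus \sigma$; hence $\rno_i \notin Q$. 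Symmetrically, if $\lno_j \in P$ then, since $M_\sigma|_{P \sqcup Q}$ is a perfect matching, its $M_\sigma$-partner $\rno_i$ would lie in $Q$, again impossible; hence $\lno_j \notin P$. It follows that $P \subseteq \sigma'' \cap \sigma'$, and that $M_{\sigma''}|_{P \sqcup Q} = M_\sigma|_{P \sqcup Q}$ is still a perfect matching on $P \sqcup Q$.

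Finally, because $M_{\sigma''}$ agrees with $M_{\sigma'}$ at $\rno_i$ (both send it to $\lno_{j'}$), where $M_\sigma$ did not, while agreeing with $M_\sigma$ at every other right node, the disagreement count strictly drops: $\Phi(\sigma'',\sigma') = \Phi(\sigma,\sigma') - 1$. The inductive hypothesis then yields $M_{\sigma''}|_{P \sqcup Q} = M_{\sigma'}|_{P \sqcup Q}$, and combining this with $M_{\sigma''}|_{P \sqcup Q} = M_\sigma|_{P \sqcup Q}$ gives the claim. I expect the main obstacle to be choosing the right potential: the naive measure $|\sigma \setminus \sigma'|$ need not decrease under the exchange, since the discarded node $\lno_j$ may still belong to $\sigma'$, whereas the disagreement count $\Phi$ always decreases; the accompanying work is the pair of verifications $\rno_i \notin Q$ and $\lno_j \notin P$, which are what guarantee that the exchange leaves $P \sqcup Q$ pointwise fixed and thus allow the induction to close. (Alternatively, one could first reduce to a single alternating cycle in the symmetric difference, in the spirit of Lemma~\ref{lem:unique-rdv-topes}, but the potential argument avoids this bookkeeping.)
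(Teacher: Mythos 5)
Your proof is correct and runs on the same engine as the paper's: repeated application of the one-step exchange from Lemma~\ref{lem:subset+linkage}, with the two verifications $\rno_i \notin Q$ and $\lno_j \notin P$ appearing in both arguments in exactly the same role (they are what guarantee the exchange fixes $P \sqcup Q$ pointwise and that $P \subseteq \sigma'' \cap \sigma'$). The one genuine difference is the induction measure, and it is a difference worth having. The paper inducts on $|\sigma \cap \sigma'|$ and asserts $|\sigma'' \cap \sigma'| = |\sigma \cap \sigma'| + 1$ for $\sigma'' = \sigma \setminus \{\lno_j\} \cup \{\lno_{j'}\}$; that increment holds only when $\lno_j \notin \sigma'$, which Lemma~\ref{lem:subset+linkage} does not guarantee --- the discarded node $\lno_j$ may well lie in $\sigma \cap \sigma'$, in which case the intersection size does not grow and the paper's induction does not visibly advance. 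Your potential $\Phi$, the number of right nodes on which the two matchings disagree, provably drops by exactly one at each exchange, since $M_{\sigma''}$ agrees with $M_\sigma$ off $\rno_i$ and newly agrees with $M_{\sigma'}$ at $\rno_i$ (where $M_\sigma$ disagreed, as $\lno_j \neq \lno_{j'}$). So your choice of measure repairs, or at least makes rigorous, a step the paper leaves unjustified, at no extra cost: everything else --- the preservation of the perfect matching on $P \sqcup Q$, the applicability of the inductive hypothesis to $(\sigma'', \sigma')$, and the final concatenation of equalities --- matches the paper's reasoning.
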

\begin{proof}
  The claim follows by induction on the size of the intersection $\sigma \cap \sigma'$.
  For $\sigma = \sigma'$ the claim is just the fact that there is exactly one matching per $d$-element subset in the matching field.
  Otherwise, by Lemma~\ref{lem:subset+linkage}, there is an $\lno_{j'} \in \sigma'\setminus \sigma$ with certain properties.
  Since $\lno_{j'} \not\in P$, the node $\rno_i$ adjacent to $\lno_{j'}$ in $M_{\sigma'}$ is not an element of $Q$.
  Hence, the node $\lno_j$ adjacent to $\rno_i$ in $M_{\sigma}$ is not in $P$.
  By Lemma~\ref{lem:subset+linkage}, the matching on $\sigma'' = \sigma \setminus \lno_j \cup \{\lno_{j'}\}$ is uniquely defined and also contains $M_{\sigma}|_{P \sqcup Q}$.
  Furthermore, $|\sigma'' \cap \sigma'| = |\left(\sigma \setminus \lno_j \cup \{\lno_{j'}\}\right) \cap \sigma'| = |\sigma \cap \sigma'|+1$.
  This concludes the proof by induction.
\end{proof}

Observe that compatibility is a weaker condition than linkage. Figure~\ref{fig:compatible+not+linkage} shows an example of a matching field whose matchings are pairwise compatible but do not satisfy linkage.

\begin{figure}%
\begin{tikzpicture}[scale=0.6]

\node[BoxVertex] (1) at (0,1.5){1};
\node[BoxVertex] (2) at (0,0.5){2};
\node[BoxVertex] (3) at (0,-0.5){3};
\node[BoxVertex] (4) at (0,-1.5){4};
\node[VertexStyle] (11) at (2,1){1};
\node[VertexStyle] (22) at (2,0){2};
\node[VertexStyle] (33) at (2,-1){3};

\draw[EdgeStyle] (1) to (11);
\draw[EdgeStyle] (2) to (22);
\draw[EdgeStyle] (3) to (33);

\node[BoxVertex] (1) at (4,1.5){1};
\node[BoxVertex] (2) at (4,0.5){2};
\node[BoxVertex] (3) at (4,-0.5){3};
\node[BoxVertex] (4) at (4,-1.5){4};
\node[VertexStyle] (11) at (6,1){1};
\node[VertexStyle] (22) at (6,0){2};
\node[VertexStyle] (33) at (6,-1){3};

\draw[EdgeStyle] (1) to (11);
\draw[EdgeStyle] (2) to (22);
\draw[EdgeStyle] (4) to (33);

\node[BoxVertex] (1) at (8,1.5){1};
\node[BoxVertex] (2) at (8,0.5){2};
\node[BoxVertex] (3) at (8,-0.5){3};
\node[BoxVertex] (4) at (8,-1.5){4};
\node[VertexStyle] (11) at (10,1){1};
\node[VertexStyle] (22) at (10,0){2};
\node[VertexStyle] (33) at (10,-1){3};

\draw[EdgeStyle] (4) to (11);
\draw[EdgeStyle] (1) to (22);
\draw[EdgeStyle] (3) to (33);

\node[BoxVertex] (1) at (12,1.5){1};
\node[BoxVertex] (2) at (12,0.5){2};
\node[BoxVertex] (3) at (12,-0.5){3};
\node[BoxVertex] (4) at (12,-1.5){4};
\node[VertexStyle] (11) at (14,1){1};
\node[VertexStyle] (22) at (14,0){2};
\node[VertexStyle] (33) at (14,-1){3};

\draw[EdgeStyle] (2) to (11);
\draw[EdgeStyle] (3) to (22);
\draw[EdgeStyle] (4) to (33);

\end{tikzpicture}%
\caption{A $(4,3)$-matching field whose matchings are compatible. Note that the matching on $\{\lno_1,\lno_3,\lno_4\}$ does not agree with any other matching on two edges, and therefore the matching field does not satisfy the linkage property.
Equivalently, the union of the matchings contains a cycle and, hence, it is not a tree. }%
\label{fig:compatible+not+linkage}%
\end{figure}
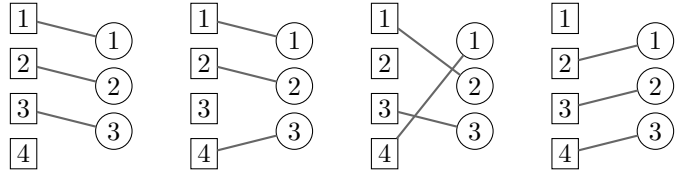

We have now gathered the necessary tools to construct a linkage tope field from a linkage matching field.

\subsection{Tope fields from matching fields} \label{sec:tope+from+matching}

\begin{definition} \label{def:linkage+covector}
  We say that an $(n,d)$-tope field of type $(v_1,\ldots, v_d)$ with thickness $k$ is \emph{linkage} if for all $(k+1)$-subsets $\tau$ of $\lnoset$, the union of the topes on $\tau$ is a tree. 
  Such a tree is a \emph{(tope) linkage covector}.
  In particular, an $(n,d)$-matching field is \emph{linkage} if for all $(d+1)$-subsets $\tau$ of $\lnoset$, the union of the matchings on $\tau$ is a tree. 
  We call this tree a \emph{(matching) linkage covector}.
\end{definition}

\begin{remark}
 The linkage covectors of a matching field are essentially the same as linkage trees defined in \cite{SturmfelsZelevinsky:1993}.
One can transform a matching linkage tree to a linkage covector by replacing the edge $(j,j')$ with label $i$ by the edges $(\lno_j, \rno_i), (\lno_{j'}, \rno_i)$.
Linkage trees will play a role in results on the flip graph of a matching field in Section \ref{sec:flip+graph}.
\end{remark}

\begin{example}
Consider the matching field and tope field given in Figures~\ref{fig:four+two+matching+field} and~\ref{fig:four+two+tope+field}. Both of these are linkage with the corresponding linkage covectors given in Figure~\ref{fig:four+two+linkage+covectors}.

\begin{figure}[htb]
  \begin{center}
	\resizebox{\textwidth}{!}{
  \begin{tikzpicture}[scale=0.6]

\bigraphfourtwocoord{0}{0}{1.2}{0.6}{2.5};
\draw[EdgeStyle] (v1) to (w1);
\draw[EdgeStyle] (v2) to (w1);
\draw[EdgeStyle] (v2) to (w2);
\draw[EdgeStyle] (v3) to (w2);
\bigraphfourtwonodes

\bigraphfourtwocoord{4}{0}{1.2}{0.6}{2.5};
\draw[EdgeStyle] (v1) to (w1);
\draw[EdgeStyle] (v4) to (w1);
\draw[EdgeStyle] (v2) to (w2);
\draw[EdgeStyle] (v4) to (w2);
\bigraphfourtwonodes

\bigraphfourtwocoord{8}{0}{1.2}{0.6}{2.5};
\draw[EdgeStyle] (v1) to (w1);
\draw[EdgeStyle] (v4) to (w1);
\draw[EdgeStyle] (v4) to (w2);
\draw[EdgeStyle] (v3) to (w2);
\bigraphfourtwonodes

\bigraphfourtwocoord{12}{0}{1.2}{0.6}{2.5};
\draw[EdgeStyle] (v2) to (w1);
\draw[EdgeStyle] (v4) to (w1);
\draw[EdgeStyle] (v2) to (w2);
\draw[EdgeStyle] (v3) to (w2);
\bigraphfourtwonodes

\bigraphfourtwocoord{20}{0}{1.2}{0.6}{2.5};
\draw[EdgeStyle] (v1) to (w1);
\draw[EdgeStyle] (v2) to (w1);
\draw[EdgeStyle] (v4) to (w1);
\draw[EdgeStyle] (v2) to (w2);
\draw[EdgeStyle] (v3) to (w2);
\bigraphfourtwonodes

\end{tikzpicture}
	}
  \caption{The four matching linkage covectors from the $(4,2)$-matching field and the one tope linkage covector from the $(4,2)$-tope field.}
  \label{fig:four+two+linkage+covectors}
	\end{center}
\end{figure}
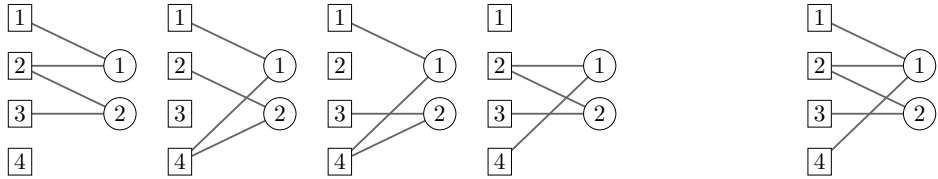

\end{example}

\begin{example} \label{ex:linkage+extracted+matching+field}
  We continue to highlight the relationship to triangulations of the polytope~$\Delta_{n-1}~\times~\Delta_{d-1}$ started in Subsection~\ref{sec:matching+fields+from+triangulations}.
	Let $\tau \subseteq \lnoset$ be a $(d+1)$-subset.
	The triangulation induced on the product
  \[
  \conv\{\unit{j} \mid \lno_j \in \tau\} \times \Delta_{d-1}  \enspace ,
  \]
  which is a face of $\Delta_{n-1} \times \Delta_{d-1}$, contains a unique maximal simplex whose corresponding tree has right degree vector $2 \cdot \unit{[d]}$ by \cite[Theorem 12.9]{Postnikov:2009}.
  This is the linkage covector of the matching field on $\tau$ as can be seen from Proposition~\ref{prop:unique+contained+tope}. This follows as it contains all the matchings on the $d$-subsets of $\tau$ and, hence, their union is just this tree.
  Hence, the matching field derived from a triangulation is linkage, as also stated in~\cite{OhYoo-ME:2013}. 
  Restricting to regular subdivisions, this implies that coherent matching fields are linkage, see Example~\ref{ex:coherent-matching-field} and~\cite{SturmfelsZelevinsky:1993}.

  \smallskip

  A matching field derived from a non-regular triangulation is depicted in Figure~\ref{fig:nonregular+subdivision}.
	The picture on the left shows the non-regular fine mixed subdivision of $6\Delta_2$ which corresponds to a non-regular triangulation of $\Dprod{5}{2}$ by the Cayley trick, both of which can be found in~\cite{DeLoeraRambauSantos}.
	Every cell contains an `upward' unit simplex, in particular cells who do not share a facet with the boundary have their upward simplices coloured grey.
	The right side shows the bipartite graphs describing the maximal simplices corresponding to the grey upward simplices on the left.
	They are in bijection with the lattice points given by the right degree vectors of the trees on the right.
	Inner cells that do not share a facet with the boundary correspond to those trees whose right degree vector does not contain a $1$ entry.
\end{example}

\begin{figure}%
\centering
\includegraphics[width=0.48\textwidth]{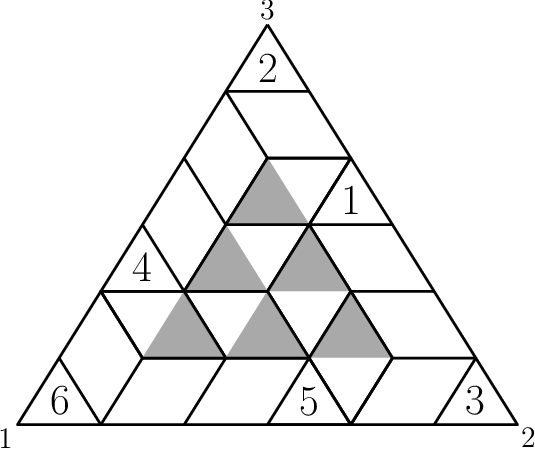}
\quad
    \scalebox{0.57}{ \begin{tikzpicture}[scale=0.6]

\newcommand{\bigraphsixthreecoord}[5]{
\coordinate (v1) at (#1,#2+2.5*#3);
\coordinate (v2) at (#1,#2+1.5*#3);
\coordinate (v3) at (#1,#2+0.5*#3);
\coordinate (v4) at (#1,#2-0.5*#3);
\coordinate (v5) at (#1,#2-1.5*#3);
\coordinate (v6) at (#1,#2-2.5*#3);

\coordinate (w1) at (#1+#5,#2+2*#4);
\coordinate (w2) at (#1+#5,#2);
\coordinate (w3) at (#1+#5,#2-2*#4);
}

\newcommand{\bigraphsixthreenodes}{
\node[BoxVertex] (1) at (v1){1};
\node[BoxVertex] (2) at (v2){2};
\node[BoxVertex] (3) at (v3){3};
\node[BoxVertex] (4) at (v4){4};
\node[BoxVertex] (5) at (v5){5};
\node[BoxVertex] (6) at (v6){6};

\node[VertexStyle] (11) at (w1){1};
\node[VertexStyle] (22) at (w2){2};
\node[VertexStyle] (33) at (w3){3};
}

\bigraphsixthreecoord{0}{5.1}{0.8}{0.6}{2.5};
\draw[EdgeStyle] (v1) to (w1);
\draw[EdgeStyle] (v2) to (w1);
\draw[EdgeStyle] (v1) to (w3);
\draw[EdgeStyle] (v2) to (w2);
\draw[EdgeStyle] (v4) to (w2);
\draw[EdgeStyle] (v3) to (w3);
\draw[EdgeStyle] (v5) to (w3);
\draw[EdgeStyle] (v6) to (w3);
\bigraphsixthreenodes

\bigraphsixthreecoord{-3}{0}{0.8}{0.6}{2.5};
\draw[EdgeStyle] (v1) to (w1);
\draw[EdgeStyle] (v2) to (w1);
\draw[EdgeStyle] (v3) to (w1);
\draw[EdgeStyle] (v2) to (w2);
\draw[EdgeStyle] (v4) to (w2);
\draw[EdgeStyle] (v3) to (w3);
\draw[EdgeStyle] (v5) to (w3);
\draw[EdgeStyle] (v6) to (w3);
\bigraphsixthreenodes

\bigraphsixthreecoord{3}{0}{0.8}{0.6}{2.5};
\draw[EdgeStyle] (v1) to (w1);
\draw[EdgeStyle] (v6) to (w2);
\draw[EdgeStyle] (v3) to (w1);
\draw[EdgeStyle] (v2) to (w2);
\draw[EdgeStyle] (v4) to (w2);
\draw[EdgeStyle] (v3) to (w3);
\draw[EdgeStyle] (v5) to (w3);
\draw[EdgeStyle] (v6) to (w3);
\bigraphsixthreenodes

\bigraphsixthreecoord{-6}{-5.1}{0.8}{0.6}{2.5};
\draw[EdgeStyle] (v1) to (w1);
\draw[EdgeStyle] (v2) to (w1);
\draw[EdgeStyle] (v4) to (w2);
\draw[EdgeStyle] (v6) to (w2);
\draw[EdgeStyle] (v3) to (w1);
\draw[EdgeStyle] (v4) to (w1);
\draw[EdgeStyle] (v5) to (w3);
\draw[EdgeStyle] (v6) to (w3);
\bigraphsixthreenodes

\bigraphsixthreecoord{0}{-5.1}{0.8}{0.6}{2.5};
\draw[EdgeStyle] (v1) to (w1);
\draw[EdgeStyle] (v2) to (w1);
\draw[EdgeStyle] (v3) to (w1);
\draw[EdgeStyle] (v2) to (w2);
\draw[EdgeStyle] (v4) to (w2);
\draw[EdgeStyle] (v6) to (w2);
\draw[EdgeStyle] (v5) to (w3);
\draw[EdgeStyle] (v6) to (w3);
\bigraphsixthreenodes

\bigraphsixthreecoord{6}{-5.1}{0.8}{0.6}{2.5};
\draw[EdgeStyle] (v1) to (w1);
\draw[EdgeStyle] (v2) to (w2);
\draw[EdgeStyle] (v3) to (w1);
\draw[EdgeStyle] (v5) to (w2);
\draw[EdgeStyle] (v4) to (w2);
\draw[EdgeStyle] (v3) to (w3);
\draw[EdgeStyle] (v5) to (w3);
\draw[EdgeStyle] (v6) to (w2);
\bigraphsixthreenodes

\end{tikzpicture} }
\caption{The maximal linkage covectors of a matching field extracted from the interior cells of a non-regular subdivision of $6\Delta_2$ (example from \cite[Figure 9.53]{DeLoeraRambauSantos}).
Each cell contains a unique grey simplex, the interior one being in bijection with the lattice points of the simplex $2\Delta_2$.}%
\label{fig:nonregular+subdivision}%
\end{figure}

The next lemma together with Lemma~\ref{lem:linkage+and+tree} shows that Definition~\ref{def:linkage+covector} agrees with Definition~\ref{def:linkage+axiom} for matching fields. 

	Fix a linkage $(n,d)$-tope field $\cM$ of type $v$ and thickness $k$.
	Consider a linkage covector $C$ on $\tau \sqcup \rnoset$ with $|\tau| = k+1$. 

\begin{lemma} \label{lem:unique-tope-linkage}
  For each $\rno_i \in \rnoset$, there are exactly $v_i+1$ nodes adjacent to $\rno_i$ in $C$. 
  Furthermore, for each $\lno_j \in \tau$, $C$ contains the unique tope of the tope field with right degree vector $v$ such that $\lno_j$ is isolated.
\end{lemma}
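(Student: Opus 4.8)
The plan is to leverage the single structural hypothesis available, namely that the covector $C$ is a tree on $\tau \sqcup \rnoset$. Since a tree is connected, $C$ spans all of its $(k+1)+d$ nodes and therefore has exactly $k+d$ edges. Counting edges by their right endpoints gives $\sum_{i=1}^{d} \deg_C(\rno_i) = k+d = \sum_{i=1}^{d}(v_i+1)$, where the last equality uses $\sum_i v_i = k$. Consequently the first claim reduces to the pointwise inequality $\deg_C(\rno_i) \geq v_i+1$ for every $i$: if each right degree is at least $v_i+1$ while the right degrees sum to exactly $\sum_i (v_i+1)$, then every inequality must be an equality.

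First I would record the trivial bound $\deg_C(\rno_i) \geq v_i$: within any one tope $M_\sigma$ the node $\rno_i$ already has $v_i$ distinct neighbours, and every edge of every tope is an edge of $C$. The crux is to improve this to $\deg_C(\rno_i) \geq v_i+1$. Assume for contradiction that $\deg_C(\rno_i)=v_i$, and set $S = N_C(\rno_i)$, so $S \subseteq \tau$ and $|S| = v_i$. For each $\lno_j \in \tau$, the tope $M_{\tau \setminus \{\lno_j\}}$ attaches $\rno_i$ to a $v_i$-element subset of $N_C(\rno_i)=S$, which is therefore all of $S$; but in this tope $\lno_j$ is isolated, so $\lno_j \notin S$. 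Letting $\lno_j$ range over all of $\tau$ forces $S \cap \tau = \emptyset$, contradicting $\emptyset \neq S \subseteq \tau$. I expect this to be the main obstacle, as it is the only place where the entire family of topes, rather than a single tope, must be played off against the acyclicity of $C$; the preceding edge-count is mere bookkeeping.

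For the second claim, existence is immediate: by Definition~\ref{def:linkage+covector} the covector $C$ is the union $\bigcup_{\lno_j \in \tau} M_{\tau \setminus \{\lno_j\}}$, and $M_{\tau \setminus \{\lno_j\}}$ is precisely the tope of the tope field with right degree vector $v$ in which $\lno_j$ is isolated, so it lies in $C$. For uniqueness as a subgraph of $C$, suppose $T \subseteq C$ is any tope with right degree vector $v$ and $\lno_j$ isolated. As the left nodes of $C$ all lie in $\tau$ and $T$ has thickness $k$ with $\lno_j$ absent, the left support of $T$ must be $\tau \setminus \{\lno_j\}$; hence $T$ and $M_{\tau \setminus \{\lno_j\}}$ share both degree vectors. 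If they were distinct, Lemma~\ref{lem:unique-rdv-topes} would render them incompatible, producing a cycle in $T \cup M_{\tau \setminus \{\lno_j\}} \subseteq C$ and contradicting that $C$ is a tree. Thus $T = M_{\tau \setminus \{\lno_j\}}$, completing the argument.
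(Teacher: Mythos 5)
Your proof is correct and follows essentially the same route as the paper: a pointwise lower bound $\deg_C(\rno_i)\ge v_i+1$ obtained by playing the topes' isolated left nodes against the tree structure, combined with the edge count of a tree on $(k+1)+d$ nodes to force equality, and, for the second claim, existence from the definition plus uniqueness via Lemma~\ref{lem:unique-rdv-topes} and the acyclicity of $C$. The only cosmetic difference is that the paper exhibits two specific topes to produce $v_i+1$ distinct neighbours of $\rno_i$ directly, whereas you establish the same bound by contradiction, ranging over all topes on $\tau$.
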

\begin{proof}
  Fix a node $\rno_i \in \rnoset$ and consider two different topes which have only isolated nodes in $\lnoset \setminus \tau$.
  Choose the second such that a neighbour of $\rno_i$ is isolated in the first.
  This shows that $\rno_i$ is adjacent to at least $v_i + 1$ nodes.
  Furthermore, summing up degrees across all nodes shows $\rno_i$ must have exactly $v_i +1$ neighbours, else $C$ is not a tree.
	
  For the second claim, the containment of such a tope is guaranteed by the definition of a linkage covector.
  Now, suppose the tope is not unique; then Lemma~\ref{lem:unique-rdv-topes} implies that $C$ contains a cycle.
\end{proof}

Our next construction starts to connect tope fields of different types.

\begin{lemma} \label{lem:enriched-matching}
  For each $\rno_i \in \rnoset$, the tope linkage covector $C$ contains a unique tope with right degree vector $v + \unit{i}$.
  It is obtained as the union of topes of Lemma~\ref{lem:unique-tope-linkage}.
\end{lemma}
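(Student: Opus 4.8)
The plan is to construct the claimed tope explicitly as a union of two of the topes produced by Lemma~\ref{lem:unique-tope-linkage}, and to obtain uniqueness from the tree structure of $C$ together with Lemma~\ref{lem:unique-rdv-topes}.

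First I would record what Lemma~\ref{lem:unique-tope-linkage} gives locally at $\rno_i$. Writing $S_i$ for the set of neighbours of $\rno_i$ in $C$, that lemma says $|S_i| = v_i + 1 \geq 2$ and that for each $\lno_a \in S_i$ the covector $C$ contains the tope $M_a := M_{\tau \setminus \{\lno_a\}}$ of right degree vector $v$ in which $\lno_a$ is isolated. Since $\rno_i$ has degree $v_i$ in $M_a$ and its neighbours lie in $S_i$, the only missing one is $\lno_a$, so $N_{M_a}(\rno_i) = S_i \setminus \{\lno_a\}$. Fixing two distinct $\lno_a,\lno_b \in S_i$, the union $T_i := M_a \cup M_b$ then attaches $\rno_i$ to all of $S_i$, giving it degree $v_i + 1$.

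The crux is to show that $M_a$ and $M_b$ agree at every right node other than $\rno_i$, so that $T_i$ genuinely has right degree vector $v + \unit{i}$ and left degree vector $\unit{\tau}$. Here I would run the alternating-cycle argument of Lemma~\ref{lem:unique-rdv-topes} on the symmetric difference $M_a \mathbin{\triangle} M_b$: orienting the edges of $M_a$ from $\lnoset$ to $\rnoset$ and those of $M_b$ conversely, every node is balanced (equal in- and out-degree) except $\lno_b$ and $\lno_a$, because $M_a$ and $M_b$ share the right degree vector $v$ and differ in their left support only at $\lno_a,\lno_b$. As $M_a \cup M_b \subseteq C$ is a forest, this oriented symmetric difference is acyclic, hence a single directed path from $\lno_b$ to $\lno_a$. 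Its only possible edges at $\rno_i$ are $(\lno_b,\rno_i)$ and $(\lno_a,\rno_i)$, while $\lno_a,\lno_b$ have no further incident edges in the symmetric difference, so the path is exactly $\lno_b - \rno_i - \lno_a$. Thus the symmetric difference consists of these two edges alone, proving that $M_a$ and $M_b$ coincide away from $\rno_i$; consequently $T_i$ is the desired tope, and by construction it is a union of topes from Lemma~\ref{lem:unique-tope-linkage}.

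Finally, for uniqueness I would observe that any tope in $C$ with right degree vector $v + \unit{i}$ has thickness $k+1$ and can use only the $k+1$ left nodes of $\tau$, so it has left degree vector $\unit{\tau}$. Two distinct such topes would share both degree vectors and hence be incompatible by Lemma~\ref{lem:unique-rdv-topes}, forcing a cycle in their union; but that union lies in the tree $C$, a contradiction. I expect the main obstacle to be the middle step—verifying that passing from $M_a$ to $M_b$ alters nothing outside $\rno_i$—which is precisely where acyclicity of $C$ is used to pin the symmetric difference down to the single edge pair at $\rno_i$.
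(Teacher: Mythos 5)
Your proof is correct, and every step checks out: the degree count $N_{M_a}(\rno_i)=S_i\setminus\{\lno_a\}$ follows from Lemma~\ref{lem:unique-tope-linkage}, the oriented symmetric difference $M_a\mathbin{\triangle}M_b$ is balanced except at $\lno_a,\lno_b$, and acyclicity of $C$ does pin it down to the single path $\lno_b-\rno_i-\lno_a$, so $M_a\cup M_b$ really is a tope of type $v+\unit{i}$; the uniqueness argument via Lemma~\ref{lem:unique-rdv-topes} is also the right one. The route differs from the paper's in direction rather than in substance: the paper works top-down, first invoking Proposition~\ref{prop:unique+contained+tope} to produce the unique tope of right degree vector $v+\unit{i}$ inside the tree $C$, and only then observing that deleting either of two neighbours of $\rno_i$ from that tope yields a type-$v$ tope which, by the uniqueness in Lemma~\ref{lem:unique-tope-linkage}, must be one of the topes of the tope field, so the big tope is their union. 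You instead build bottom-up, gluing two of the Lemma~\ref{lem:unique-tope-linkage} topes and verifying the degree vectors directly by the Eulerian-type argument on the symmetric difference. Your version is more self-contained (it re-proves, in this special case, what Proposition~\ref{prop:unique+contained+tope} supplies) at the cost of being longer; the paper's version is shorter but leans on the earlier proposition. Both ultimately rest on the same two facts, namely that $C$ is a tree and that topes with equal degree vector pairs are incompatible.
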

\begin{proof}
  Proposition \ref{prop:unique+contained+tope} gives the first claim.
  For the second claim, consider $\lno_j,\lno_{j'}$ adjacent to $\rno_i$.
  Removing either of these nodes yields a tope of type $v$ contained in the tope of type $v + \unit{i}$.
  By Lemma~\ref{lem:unique-tope-linkage} these topes are unique, therefore we can realise the tope of type $v + \unit{i}$ as the union of them.
\end{proof}

For every $(k+1)$-subset $\tau \subseteq \lnoset$, there is a corresponding linkage covector $C_{\tau}$.
Fix some $\rno_i \in \rnoset$.
By Lemma~\ref{lem:enriched-matching}, there exists a unique tope $G_{\tau}$ of type $v + \unit{i}$ contained in $C_{\tau}$.

\begin{definition}
 The set of the graphs $G_{\tau}$ is the \emph{$i$-amalgamation} of the tope field~$\cM$.
 This is a tope field of type $v + \unit{i}$.
\end{definition}

\begin{example}
  The $(4,2)$-tope field of type $(2,1)$ in Figure~\ref{fig:four+two+tope+field} can be induced from the $(4,2)$-matching field in Figure~\ref{fig:four+two+matching+field} with the construction from Proposition~\ref{prop:unique+contained+tope}.
The topes are obtained from each of the linkage covectors by taking the $1$-amalgamation, as we demonstrate in Figure~\ref{fig:four+two+amalgamation}.
\begin{figure}[htb]
  \begin{center}
  \begin{tikzpicture}[scale=0.6]

\bigraphfourtwocoord{0}{0}{1.2}{0.6}{2.5};
\draw[EdgeStyle] (v1) to (w1);
\draw[EdgeStyle] (v2) to (w1);
\draw[EdgeStyle] (v2) to (w2);
\draw[EdgeStyle] (v3) to (w2);
\bigraphfourtwonodes

\bigraphfourtwocoord{4}{0}{1.2}{0.6}{2.5};
\draw[EdgeStyle] (v1) to (w1);
\draw[EdgeStyle] (v4) to (w1);
\draw[EdgeStyle] (v2) to (w2);
\draw[EdgeStyle] (v4) to (w2);
\bigraphfourtwonodes

\bigraphfourtwocoord{8}{0}{1.2}{0.6}{2.5};
\draw[EdgeStyle] (v1) to (w1);
\draw[EdgeStyle] (v4) to (w1);
\draw[EdgeStyle] (v4) to (w2);
\draw[EdgeStyle] (v3) to (w2);
\bigraphfourtwonodes

\bigraphfourtwocoord{12}{0}{1.2}{0.6}{2.5};
\draw[EdgeStyle] (v2) to (w1);
\draw[EdgeStyle] (v4) to (w1);
\draw[EdgeStyle] (v2) to (w2);
\draw[EdgeStyle] (v3) to (w2);
\bigraphfourtwonodes

\draw[->] (7,-2.5) -- (7,-4.5);

\bigraphfourtwocoord{0}{-7}{1.2}{0.6}{2.5};
\draw[EdgeStyle] (v1) to (w1);
\draw[EdgeStyle] (v2) to (w1);
\draw[EdgeStyle] (v3) to (w2);
\bigraphfourtwonodes

\bigraphfourtwocoord{4}{-7}{1.2}{0.6}{2.5};
\draw[EdgeStyle] (v1) to (w1);
\draw[EdgeStyle] (v4) to (w1);
\draw[EdgeStyle] (v2) to (w2);
\bigraphfourtwonodes

\bigraphfourtwocoord{8}{-7}{1.2}{0.6}{2.5};
\draw[EdgeStyle] (v1) to (w1);
\draw[EdgeStyle] (v4) to (w1);
\draw[EdgeStyle] (v3) to (w2);
\bigraphfourtwonodes

\bigraphfourtwocoord{12}{-7}{1.2}{0.6}{2.5};
\draw[EdgeStyle] (v2) to (w1);
\draw[EdgeStyle] (v4) to (w1);
\draw[EdgeStyle] (v3) to (w2);
\bigraphfourtwonodes

\end{tikzpicture}
  \caption{The $(4,2)$-tope field in Figure \ref{fig:four+two+tope+field} arising as the 1-amalgamation of the $(4,2)$-matching field in Figure \ref{fig:four+two+matching+field}.}
  \label{fig:four+two+amalgamation}
	\end{center}
\end{figure}
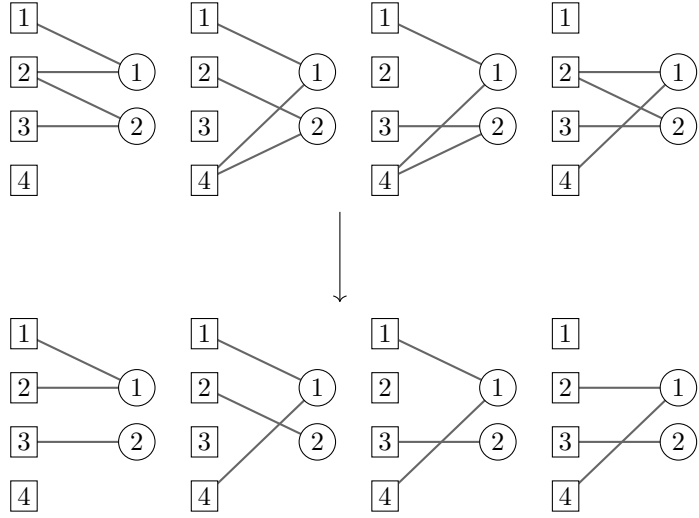
\end{example}

In the following, we fix a $(k+2)$-subset $\sigma \subseteq \lnoset$ and denote the tope $G_{\sigma \setminus \lno_s}$ by $G_{s}$.

\begin{lemma} \label{lem:intersection+amalgamation+topes}
  Let $G_j$ and $G_{j'}$ be two topes in the $i$-amalgamation of the $(n,d)$-tope field~$\cM$. Then the neighbourhood of $\rno_i$ differs by at most one element in $G_j$ and $G_{j'}$.
\end{lemma}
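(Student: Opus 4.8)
The plan is to avoid comparing $G_j = G_{\sigma \setminus \lno_j}$ and $G_{j'} = G_{\sigma \setminus \lno_{j'}}$ directly, since as topes of type $v + \unit{i}$ on the different node sets $\sigma \setminus \lno_j$ and $\sigma \setminus \lno_{j'}$ they need not contain a common tope of type $v$. Instead I would lift the comparison to the two linkage covectors $C_{\sigma \setminus \lno_j}$ and $C_{\sigma \setminus \lno_{j'}}$ that produce them, which \emph{do} share a common tope. The first step is to observe that the neighbourhood of $\rno_i$ is not changed when $G_{\sigma\setminus\lno_j}$ is extracted from $C_{\sigma\setminus\lno_j}$. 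By Lemma~\ref{lem:unique-tope-linkage} the tree $C_{\sigma\setminus\lno_j}$ has right degree vector $v + \unit{[d]}$, and by Lemma~\ref{lem:enriched-matching} its unique tope of type $v + \unit{i}$ is the one given by Proposition~\ref{prop:unique+contained+tope} with distinguished right node $\rno_i$. That construction deletes, for each path from $\rno_i$ to another right node, the last edge of the path, and every such edge is incident to a right node different from $\rno_i$; hence no edge at $\rno_i$ is removed. Writing $N_H(\rno_i)$ for the neighbourhood of $\rno_i$ in a graph $H$, this yields $N_{G_j}(\rno_i) = N_{C_{\sigma\setminus\lno_j}}(\rno_i)$, and symmetrically $N_{G_{j'}}(\rno_i) = N_{C_{\sigma\setminus\lno_{j'}}}(\rno_i)$.

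The second step is to exhibit the common tope. Set $\rho = \sigma \setminus \{\lno_j,\lno_{j'}\}$, a $k$-subset of both $\sigma \setminus \lno_j$ and $\sigma \setminus \lno_{j'}$. By Definition~\ref{def:linkage+covector} each covector is the union of the topes of $\cM$ on the $k$-subsets of its index set, so the tope $M_\rho$ of type $v$ satisfies $M_\rho \subseteq C_{\sigma\setminus\lno_j}$ and $M_\rho \subseteq C_{\sigma\setminus\lno_{j'}}$. In $M_\rho$ the node $\rno_i$ has degree $v_i$, whereas in each covector it has degree $v_i + 1$ by Lemma~\ref{lem:unique-tope-linkage}.

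Combining the two steps finishes the argument. Since $M_\rho$ is contained in $C_{\sigma\setminus\lno_j}$, the $v_i$ neighbours $N_{M_\rho}(\rno_i)$ of $\rno_i$ are also neighbours of $\rno_i$ in $C_{\sigma\setminus\lno_j}$, which has exactly one further neighbour of $\rno_i$; therefore $N_{C_{\sigma\setminus\lno_j}}(\rno_i) = N_{M_\rho}(\rno_i) \cup \{x_j\}$ for a single node $x_j$, and likewise $N_{C_{\sigma\setminus\lno_{j'}}}(\rno_i) = N_{M_\rho}(\rno_i) \cup \{x_{j'}\}$. By the first step these two sets are precisely $N_{G_j}(\rno_i)$ and $N_{G_{j'}}(\rno_i)$, so their symmetric difference is contained in $\{x_j, x_{j'}\}$ and each differs from the other by at most one element, as claimed.

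The step I expect to be the crux is the first one: recognising that the right objects to compare are the covectors rather than the amalgamation topes, and verifying carefully that the tope extraction of Proposition~\ref{prop:unique+contained+tope} leaves the neighbourhood of the distinguished node $\rno_i$ untouched. Once that reduction is in place, the common tope $M_\rho$ makes the one-element bound immediate, and no cycle or compatibility argument is needed.
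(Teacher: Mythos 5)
Your proof is correct and takes essentially the same route as the paper: both arguments pass to the linkage covectors $C_{\sigma\setminus\lno_j}$ and $C_{\sigma\setminus\lno_{j'}}$, invoke the common tope on $\sigma\setminus\{\lno_j,\lno_{j'}\}$ with right degree vector $v$ (unique by Lemma~\ref{lem:unique-tope-linkage}), observe that its $v_i$ edges at $\rno_i$ lie in both amalgamation topes, and conclude from the degree count $v_i+1$ that the two neighbourhoods differ by at most one element. The only difference is presentational: you justify in detail that the neighbourhood of $\rno_i$ is unchanged when passing from $C_{\sigma\setminus\lno_j}$ to $G_j$ via the construction of Proposition~\ref{prop:unique+contained+tope}, a step the paper asserts in a single clause.
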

\begin{proof}
  The two topes are subgraphs of the linkage covectors $C_j := C_{\sigma\setminus \lno_j}$ and $C_{j'} := C_{\sigma\setminus \lno_{j'}}$ respectively.
  The two linkage covectors $C_j$ and $C_{j'}$ have a tope in common, namely the tope on $\sigma \setminus \{\lno_j,\lno_{j'}\} \sqcup \rnoset$ with right degree vector $v$, which is unique by Lemma~\ref{lem:unique-tope-linkage}.
  As $G_j$ contains all edges of $C_j$ incident with $\rno_i \in \rnoset$, in particular those in the common tope, the neighbourhood of $\rno_i$ differs by at most one element in~$G_j$ and~$G_{j'}$.
\end{proof}

Now we can prove the crucial observation that the linkage property is preserved under amalgamation.

\begin{proposition} \label{prop:i-amalgamation-linkage}
The $i$-amalgamation of a linkage tope field of type $v$ is a linkage tope field of type $v+e_i$.
\end{proposition}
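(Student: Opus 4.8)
The plan is to verify the defining condition of a linkage tope field directly for the $i$-amalgamation $\cG = (G_\tau)$, which has type $v + \unit{i}$ and thickness $k+1$. So I fix an arbitrary $(k+2)$-subset $\sigma \subseteq \lnoset$ and must show that the union $U = \bigcup_{s \in \sigma} G_{\sigma \setminus \lno_s}$ is a tree. Writing $G_s := G_{\sigma \setminus \lno_s}$ as in the paragraph preceding Lemma~\ref{lem:intersection+amalgamation+topes}, I record the structural facts I will lean on: each $G_s$ is the unique tope with right degree vector $v + \unit{i}$ contained in the linkage covector $C_s := C_{\sigma \setminus \lno_s}$ of $\cM$ (Lemma~\ref{lem:enriched-matching}), and $C_s$ is a tree (by definition of a linkage covector) with right degree vector $v + \unit{[d]}$ (Lemma~\ref{lem:unique-tope-linkage}). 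In particular each $G_s$, being contained in a tree, is a forest.

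Since $U$ is a graph on the $(k+2)+d$ nodes $\sigma \sqcup \rnoset$, it suffices to prove that $U$ is acyclic and has exactly $k+d+1$ edges; then $U$ is a forest with $|V| - |E| = 1$ components, i.e.\ a tree. For acyclicity I would argue by contradiction. A cycle $Z \subseteq U$ alternates between left and right nodes, and each of its edges is contributed by some $G_s \subseteq C_s$. The basic compatibility input is that any two topes $G_j, G_{j'}$ almost agree: they share the unique type-$v$ tope of $\cM$ on $(\sigma \setminus \{\lno_j,\lno_{j'}\}) \sqcup \rnoset$, and by Lemma~\ref{lem:intersection+amalgamation+topes} their neighbourhoods of $\rno_i$ differ in at most one node, so no two of them alone bound an alternating cycle (such a cycle would exhibit two distinct perfect matchings on a common node set, contradicting Lemma~\ref{lem:unique-rdv-topes}). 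To rule out a cycle drawing edges from three or more of the topes, I would trace $Z$ and, using the monotone containment behaviour of Lemma~\ref{lem:abstract-containment-sector} as one passes between adjacent covectors together with the fact that each $C_s$ is a tree, reduce to a cycle inside a single $C_s$ or to a degree-vector disagreement, either of which is a contradiction.

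For the edge count I would compute left degrees. Each $\lno_t \in \sigma$ has degree $1$ in every $G_s$ with $s \neq t$ and is absent from $G_t$, so $\deg_U(\lno_t)$ is the number of distinct right nodes to which $\lno_t$ is matched as $s$ varies. Summing over the $k+2$ left nodes gives $|E(U)| = (k+2) + \sum_{t \in \sigma}(\deg_U(\lno_t) - 1)$, so the claim reduces to showing that the total excess equals $d-1$. This is exactly the amount of gluing required to join the $d$ stars of a single tope into one tree, and it is controlled by the fact that the matched right node of a fixed left node can only change in the restricted manner of Lemma~\ref{lem:abstract-containment-sector} when one moves between $C_s$ and $C_{s'}$. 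Alternatively one can bypass the count and establish connectivity directly, using that $\rno_i$ acts as a hub: since each entry of $v$ is positive, every neighbourhood $N_{G_s}(\rno_i)$ has at least two elements and any two of them overlap by Lemma~\ref{lem:intersection+amalgamation+topes}, which links the components of the individual $G_s$ together.

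I expect the main obstacle to be the acyclicity step, and specifically the passage from pairwise compatibility of the $G_s$ to acyclicity of their \emph{entire} union: a cycle supported on three or more topes is not excluded by compatibility of a single pair and must instead be traced back to a cycle in one of the trees $C_s$ or to a contradiction with Lemma~\ref{lem:abstract-containment-sector}. Once acyclicity is secured, the edge count (equivalently, connectivity) is the routine bookkeeping sketched above.
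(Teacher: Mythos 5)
Your overall frame --- fix a $(k+2)$-set $\sigma$ and show the union $U$ of the $k+2$ amalgamated topes is a tree by combining an edge count with acyclicity (or connectivity) --- is the same as the paper's, but neither of the two substantive steps is actually carried out, and the tool you lean on for both is inapplicable. Lemma~\ref{lem:abstract-containment-sector} concerns two \emph{compatible} spanning trees on the \emph{same} node set $\lnoset \sqcup \rnoset$ whose right degree vectors differ by $\unit{p}-\unit{q}$. The covectors $C_s = C_{\sigma\setminus\lno_s}$ are trees on \emph{different} left node sets, all with the \emph{same} right degree vector $v+\unit{[d]}$, and --- crucially --- they need not be pairwise compatible: Example~\ref{ex:incompatible+linkage+covectors} exhibits a linkage matching field whose linkage covectors on the $(d+1)$-subsets of a common $(d+2)$-set are pairwise incompatible, which is exactly the configuration arising in the first amalgamation step. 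So ``the monotone containment behaviour \dots as one passes between adjacent covectors'' is not available, and the reduction of a cycle supported on three or more topes ``to a cycle inside a single $C_s$ or to a degree-vector disagreement'' is a plan, not an argument. Likewise the edge count is not routine bookkeeping: the assertion that the total left-degree excess equals $d-1$ is precisely what has to be proved, and it is where the paper spends most of its effort.

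The paper's proof avoids compatibility of the $C_s$ entirely. It computes the \emph{right} degree vector of $U$ exactly --- $\deg_U(\rno_i)=v_i+2$ and $\deg_U(\rno_m)=v_m+1$ for $m\neq i$ --- using only Lemma~\ref{lem:intersection+amalgamation+topes} (the neighbourhoods of $\rno_i$ in two amalgamated topes differ by at most one element) together with the trick of deleting an edge at $\rno_i$ from several of the $G_s$ so that the resulting type-$v$ topes all sit inside a \emph{single} covector $C_1$, where Lemma~\ref{lem:unique-tope-linkage} caps the degree of $\rno_m$ at $v_m+1$. This yields $|E(U)|=k+d+1=|V(U)|-1$, so it remains to prove connectivity; the paper does this by a counting argument on a hypothetical bipartition $\sigma=J\cup\overline{J}$, $\rnoset=I\cup\overline{I}$, using that every $\lno_j\in\sigma$ is isolated in some tope contained in $U$ to force $|J|+|\overline{J}|>k+2$. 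Your alternative ``hub'' argument for connectivity also falls short: overlapping neighbourhoods of $\rno_i$ glue the stars centred at $\rno_i$ to one another, but say nothing about how the components containing the other right nodes attach to them. To repair the proposal you would need either to reprove the degree computation along the paper's lines or to find a route to acyclicity that does not presuppose compatibility of the covectors.
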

\begin{proof}
  Let $\sigma \subseteq \lnoset$ be a $(k+2)$-subset and define $G$ as the union of the topes $G_{\tau}$ in the $i$-amalgamation where $\tau \subset \sigma$ ranges over the $(k+1)$-subsets.
  We claim that $G$ is a spanning tree with right degree vector $v + \unit{[d]} + \unit{i}$.
  Without loss of generality, we assume $\sigma = \{\lno_1,\dots,\lno_{k+2}\}$ and $i = 1$. 
	At first, we want to show that the degree of $\rno_1$ is $v_1+2$.
	
	Assume $\lno_1$ is adjacent to $\rno_1$ in $G$.
	The tope $G_1$ has $v_1 +1$ nodes adjacent to $\rno_1$, assume these are $\{\lno_2,\dots,\lno_{v_1+2}\}$.
	Hence $\rno_1$ has at least degree $v_1+2$ in $G$.

        Let $G_s$ be a tope containing the edge $(\lno_1,\rno_1)$.
        By Lemma~\ref{lem:intersection+amalgamation+topes}, all other neighbours of $\rno_1$ in $G_s$ form a $v_1$-subset of $\{\lno_2,\dots,\lno_{v_1+2}\}$.
        Define $\lno_t$ to be the unique node such that $\{\lno_1,\lno_2,\dots,\lno_{v_1+2}\} \setminus \{\lno_t\}$ is the neighbourhood of $\rno_1$ in $G_s$.
        Consider a tope $G_j$ for $j \in [v_1+2] \setminus \{t\}$.
        Comparing it with $G_1$ and $G_s$, Lemma~\ref{lem:intersection+amalgamation+topes} shows that the neighbourhood of $\rno_1$ in $G_j$ is $\{\lno_1,\lno_2,\dots,\lno_{v_1+2}\} \setminus \{\lno_j\}$.
        Comparing $G_t$ with such a $G_j$ and $G_1$, the same argument yields that the neighbourhood of $\rno_1$ in $G_t$ is $\{\lno_1,\lno_2,\dots,\lno_{v_1+2}\} \setminus \{\lno_t\}$.
        Analogously, by comparing with $G_1$ and $G_2$, we obtain that the neighbourhood of $\rno_1$ in $G_j$ for $j > v_1+2$ is a subset of $\{\lno_1,\lno_2,\dots,\lno_{v_1+2}\}$.
        Therefore $\rno_1$ has degree $v_1+2$ in $G$.
        
        Lemma~\ref{lem:unique-tope-linkage} implies that $G_1, \dots, G_{v_1+2}$ must agree on $\sigma \setminus \{\lno_1,\dots,\lno_{v_1+2}\} \sqcup \rnoset \setminus \{\rno_1\}$.
        Explicitly, for any two topes $G_s, G_t$, removing the edges $(\lno_t,\rno_1)$ and $(\lno_s,\rno_1)$ from each graph gives two topes of type $v$ on $\sigma \setminus \{\lno_s,\lno_t\}$.
        These topes are equal, as they are the common tope between $C_s$ and $C_t$.

  Fix a node $\rno_i \in \rnoset \setminus \{\rno_1\}$, we want to show that it has degree $v_i +1$.
  Assume that this is not the case, let $\lno_{j_1},\dots,\lno_{j_{v_i}} \in \sigma$ be the nodes adjacent to $\rno_i$ in $G_m$ for all $m \in [v_1+2]$.
  Then there are nodes $\lno_{j_p},\lno_{j_q} \in \sigma \setminus \{\lno_{j_1},\dots,\lno_{j_{v_i}}\}$ and $\lno_p, \lno_q \in \sigma$ such that $\lno_{j_p}$ is adjacent with $\rno_i$ in $G_p$ and $\lno_{j_q}$ is adjacent with $\rno_i$ in $G_q$.
	
  By Lemma \ref{lem:intersection+amalgamation+topes}, $G_p$ and $G_q$ agree on at least $v_1$ edges adjacent to $\rno_1$, assume without loss of generality they are $(\lno_1,\rno_1),\dots,(\lno_{v_1},\rno_1)$.
  Then $G_{v_1+1}$, $G_p$ and $G_q$ all contain these edges.
  Remove the edge $(\lno_1,\rno_1)$ from each graph, the resulting graphs are all topes of type $v$ contained in the linkage covector $C_1$.
  This implies that $\rno_i$ is adjacent to $\lno_{j_1},\dots,\lno_{j_{v_i}},\lno_{j_p},\lno_{j_q}$ in $C_1$, contradicting the property that $\rno_i$ has degree $v_i+1$ in $C_1$.

  Finally, we prove that $G$ is a tree.
  Assuming the opposite, the degree conditions imply that it is not connected.
  Then there are disjoint decompositions $\sigma = J \cup \overline{J}$  and $\rnoset = I \cup \overline{I}$ such that $\rno_1 \in I$ and there are no edges between $J$ and $\overline{I}$ as well as between $I$ and $\overline{J}$.
  For each $\lno_j \in \sigma$, $G$ contains a tope with right degree vector $v + \unit{1}$ such that $\lno_j$ is isolated, therefore we obtain $|J| \geq 2 + \sum_{\rno_i \in I} v_i$ and $|\overline{J}| \geq 1 + \sum_{\rno_i \in \overline{I}} v_i$. This contradicts $|J| + |\overline{J}| = k+2 = \sum_{\rno_i \in I} v_i + \sum_{\rno_i \in \overline{I}} v_i + 2$.
\end{proof}


Proposition \ref{prop:i-amalgamation-linkage} allows us to apply sequences of $i$-amalgamations to obtain a maximal tope from a linkage $(n,d)$-matching field for any right degree vector $(v_1,\dots,v_d)$ such that $\sum v_i = n$. We refer to this construction as \emph{iterated amalgamation}.

\begin{theorem} [Iterated amalgamation] \label{thm:constructed+topes}
  From a linkage matching field, we can construct maximal topes for all positive right degree vectors with sum $n$.
  Each one is the unique tope with a given right degree vector that is compatible with the matching field. 
\end{theorem}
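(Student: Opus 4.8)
The plan is to read the construction straight off Proposition~\ref{prop:i-amalgamation-linkage} and then pin down the resulting tope via weak compatibility. First I would record the shape of the target: a maximal tope with right degree vector $v=(v_1,\dots,v_d)$, $\sum_i v_i=n$, is an ordered set partition, since every left node has degree one, so its edges split $\lnoset$ into blocks $B_1,\dots,B_d$ with $B_i$ the neighbourhood of $\rno_i$ and $|B_i|=v_i$. For the existence statement I would start from $\cM$, a tope field of type $\unit{[d]}$, and apply $v_i-1$ successive $i$-amalgamations for each $i$ in any fixed order. By Proposition~\ref{prop:i-amalgamation-linkage} each step produces a linkage tope field and raises the type by $\unit{i}$, so after $\sum_i(v_i-1)=n-d$ steps we obtain a linkage tope field of type $v$ and thickness $n$; as $\binom{[n]}{n}$ is a single set, this field consists of one tope $T$ with left degree vector $\unit{[n]}$, the desired maximal tope.

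The core of the argument is the claim that $T$ contains $M_\sigma$ for every transversal $\sigma$ of its block partition (one node from each $B_i$); equivalently $M_\sigma$ equals the transversal matching $\nu_\sigma$ sending $\sigma\cap B_i\mapsto\rno_i$. I would prove, by induction along the amalgamation, the stronger statement that every tope $N_\rho$ occurring at each intermediate level satisfies $M_\sigma\subseteq N_\rho$ for every transversal $\sigma$ of the partition of $N_\rho$. The base case is $\cM$, whose blocks are singletons, so $M_\sigma=M_\sigma$. For the inductive step, let the current type be $w$, let $G_\tau$ be a tope of its $i$-amalgamation and $\sigma$ a transversal of the partition of $G_\tau$. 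Since $|\sigma|=d<|\tau|$, I can pick $\ell_j\in B_i\setminus\sigma$ (possible as $|B_i|=w_i+1\geq 2$). Then $N:=G_\tau-(\lno_j,\rno_i)$ is a tope of type $w$ with $\lno_j$ isolated contained in the linkage covector $C_\tau$, so by Lemma~\ref{lem:unique-tope-linkage} and Lemma~\ref{lem:enriched-matching} it is exactly $N_{\tau\setminus\lno_j}$ from the previous level. As the $\rno_i$-block of $N$ is $B_i\setminus\{\lno_j\}$ and still contains $\sigma\cap B_i$, the set $\sigma$ is again a transversal of the partition of $N$, and the transversal matching of $N$ on $\sigma$ coincides with that of $G_\tau$. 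The induction hypothesis then yields $M_\sigma=\nu_\sigma\subseteq N\subseteq G_\tau$. I expect this inductive bookkeeping — tracking how the block partitions and the contained matchings change under amalgamation — to be the main obstacle.

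Granting this claim, compatibility of $T$ with $\cM$ follows. If $T$ and some $M_{\sigma'}$ both restrict to perfect matchings on $J\sqcup I$, then $J$ meets each block of $T$ indexed by $I$ in exactly one node, so I would extend $J$ to a transversal $\sigma\supseteq J$ of the partition of $T$; the claim gives $M_\sigma=\nu_\sigma$, which agrees with $T$ on $J\sqcup I$. Applying Proposition~\ref{prop:weak-compatibility} to $M_\sigma$ and $M_{\sigma'}$ with $P=J\subseteq\sigma\cap\sigma'$ and $Q=I$ forces their restrictions to $J\sqcup I$ to agree, so $T$ is compatible with every matching of $\cM$.

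Finally, for uniqueness, suppose $T'$ is any maximal tope of right degree vector $v$ compatible with $\cM$ and $T\neq T'$. Since $T$ and $T'$ share both degree vectors, Lemma~\ref{lem:unique-rdv-topes} makes them incompatible, yielding an alternating cycle, that is two distinct perfect matchings $\mu\neq\mu'$ on a common $J\sqcup I$. Each node of $J$ lies in a distinct block of the partition of $T$ and of $T'$, so I can extend $J$ to a transversal $\sigma$ of the partition of $T$ and a transversal $\sigma'$ of that of $T'$, both containing $J$. For any maximal tope compatible with $\cM$ and any transversal of its partition, the tope contains the corresponding transversal matching, which by compatibility equals the relevant $M_\sigma$; hence $M_\sigma|_{J\sqcup I}=\mu$ and $M_{\sigma'}|_{J\sqcup I}=\mu'$. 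Proposition~\ref{prop:weak-compatibility} with $P=J\subseteq\sigma\cap\sigma'$ and $Q=I$ then forces $\mu=\mu'$, a contradiction. Thus $T=T'$, which simultaneously shows that the constructed tope is independent of the chosen order of amalgamations.
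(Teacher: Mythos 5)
Your proposal is correct and follows essentially the same route as the paper: existence by iterating Proposition~\ref{prop:i-amalgamation-linkage}, and uniqueness/compatibility by observing that the maximal matchings contained in any such tope must be matchings of $\cM$, so that two distinct topes with equal degree vectors would violate Lemma~\ref{lem:unique-rdv-topes} together with Proposition~\ref{prop:weak-compatibility}. The only difference is one of detail: where the paper cites Lemma~\ref{lem:enriched-matching} to assert that the contained matchings come from $\cM$, you prove this explicitly by induction along the amalgamation chain, which is a legitimate (and slightly more careful) filling-in of the same step.
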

\begin{proof}
  We can apply Proposition~\ref{prop:i-amalgamation-linkage} to obtain linkage tope fields with increasing thickness. By applying an $i$-amalgamation $v_{i}-1$ times iteratively for $i$ from $1$ to $d$, we construct a linkage tope field of type $(v_1,\ldots, v_d)$. Note that a tope field with $\sum_{i = 1}^{d}v_i = n$ contains only a single tope.

  Moreover, there is exactly one tope with right degree vector $(v_1,\ldots, v_d)$ which is compatible with the original linkage matching field.
  Assume, on the contrary, that there are two such topes $T_1$ and $T_2$.
  As a result of Lemma~\ref{lem:enriched-matching}, all the matchings in these topes are matchings of the matching field or submatchings of those.
  By Lemma~\ref{lem:unique-rdv-topes}, these topes differ in a matching.
  This implies that the contained matchings are not weakly compatible which contradicts Proposition~\ref{prop:weak-compatibility}.
  Finally, this implies the uniqueness and compatibility.
\end{proof}

The tope linkage covectors arising from iterated amalgamation do not have to be compatible, see Example~\ref{ex:incompatible+linkage+covectors}.
However, the tope linkage covector is the only tree of that right degree vector which could possibly be compatible with the matching field.
This follows from Proposition~\ref{prop:unique+contained+tope} since it is uniquely determined by the contained maximal topes.
In particular, if that tope linkage covector is not compatible with the matching field then there does not exist a tree with the same right degree vector. 
Hence, such matching fields cannot be obtained via the extraction method from a triangulation of $\Dprod{n-1}{d-1}$.

\begin{example} \label{ex:incompatible+linkage+covectors}
  Consider the $(6,4)$-linkage matching field $\cM$ given by the linkage covectors of matchings in Figure~\ref{fig:non+right+linkage+field}.
  Note that there are multiple pairs of trees that are not compatible. In particular, the fourth tree contains a matching on $\{\lno_5,\lno_6\} \sqcup \{\rno_1,\rno_2\}$ which is incompatible with the matching of $\cM$ on $\{\lno_3,\lno_4,\lno_5,\lno_6\}$ that is contained in the fifth and sixth trees.
	Incompatibility does not affect the amalgamation process and so there is still a unique maximal tope for each $(v_1,\dots,v_d)$ such that $\sum v_i = 6$ and $v_i \geq 1$.
	These are given in Figure~\ref{fig:non+polyhedral+tope+arrangement} by their bijection to the lattice points of $2\Delta_3$.
	
	Figure~\ref{fig:incompatible+linkage+covectors} shows the tope linkage covectors that the maximal topes are derived from.
	They too have a natural lattice point bijection given by their right degree vectors as there is one for each $(v_1,\dots,v_d)$ such that $\sum v_i = 9$ and $v_i \geq 2$.
	However, we note that multiple covectors have the same left degree vectors.
	By \cite[Theorem 12.9]{Postnikov:2009}, these covectors cannot encode the maximal cells of a triangulation of $\Dprod{5}{3}$, and so this matching field is not realisable by a fine mixed subdivision of $6\Delta_3$.
	
	The bijections between various graphs and lattice points will be explored further in Section \ref{sec:chow+covectors}.
\end{example}

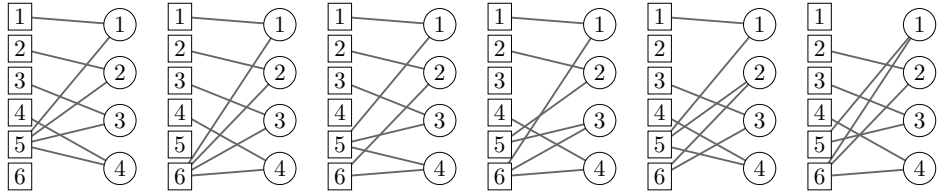
\begin{figure}[htb]
  \begin{center}
	\resizebox{\textwidth}{!}{
  \begin{tikzpicture}[scale = 0.6]
\bigraphsixfourcoord{0}{0}{0.8}{1.2}{2.5};
\draw[EdgeStyle] (v1) to (w1);
\draw[EdgeStyle] (v2) to (w2);
\draw[EdgeStyle] (v3) to (w3);
\draw[EdgeStyle] (v4) to (w4);
\draw[EdgeStyle] (v5) to (w1);
\draw[EdgeStyle] (v5) to (w2);
\draw[EdgeStyle] (v5) to (w3);
\draw[EdgeStyle] (v5) to (w4);
\bigraphsixfournodes

\bigraphsixfourcoord{4}{0}{0.8}{1.2}{2.5};
\draw[EdgeStyle] (v1) to (w1);
\draw[EdgeStyle] (v2) to (w2);
\draw[EdgeStyle] (v3) to (w3);
\draw[EdgeStyle] (v4) to (w4);
\draw[EdgeStyle] (v6) to (w1);
\draw[EdgeStyle] (v6) to (w2);
\draw[EdgeStyle] (v6) to (w3);
\draw[EdgeStyle] (v6) to (w4);
\bigraphsixfournodes

\bigraphsixfourcoord{8}{0}{0.8}{1.2}{2.5};
\draw[EdgeStyle] (v1) to (w1);
\draw[EdgeStyle] (v2) to (w2);
\draw[EdgeStyle] (v3) to (w3);
\draw[EdgeStyle] (v5) to (w4);
\draw[EdgeStyle] (v5) to (w1);
\draw[EdgeStyle] (v6) to (w2);
\draw[EdgeStyle] (v5) to (w3);
\draw[EdgeStyle] (v6) to (w4);
\bigraphsixfournodes

\bigraphsixfourcoord{12}{0}{0.8}{1.2}{2.5};
\draw[EdgeStyle] (v1) to (w1);
\draw[EdgeStyle] (v2) to (w2);
\draw[EdgeStyle] (v6) to (w3);
\draw[EdgeStyle] (v4) to (w4);
\draw[EdgeStyle] (v6) to (w1);
\draw[EdgeStyle] (v5) to (w2);
\draw[EdgeStyle] (v5) to (w3);
\draw[EdgeStyle] (v6) to (w4);
\bigraphsixfournodes

\bigraphsixfourcoord{16}{0}{0.8}{1.2}{2.5};
\draw[EdgeStyle] (v1) to (w1);
\draw[EdgeStyle] (v6) to (w2);
\draw[EdgeStyle] (v3) to (w3);
\draw[EdgeStyle] (v4) to (w4);
\draw[EdgeStyle] (v5) to (w1);
\draw[EdgeStyle] (v5) to (w2);
\draw[EdgeStyle] (v6) to (w3);
\draw[EdgeStyle] (v5) to (w4);
\bigraphsixfournodes

\bigraphsixfourcoord{20}{0}{0.8}{1.2}{2.5};
\draw[EdgeStyle] (v6) to (w1);
\draw[EdgeStyle] (v2) to (w2);
\draw[EdgeStyle] (v3) to (w3);
\draw[EdgeStyle] (v4) to (w4);
\draw[EdgeStyle] (v5) to (w1);
\draw[EdgeStyle] (v6) to (w2);
\draw[EdgeStyle] (v5) to (w3);
\draw[EdgeStyle] (v6) to (w4);
\bigraphsixfournodes
\end{tikzpicture}
	}
  \caption{The linkage covectors of the linkage matching field discussed in Example~\ref{ex:incompatible+linkage+covectors}.}
  \label{fig:non+right+linkage+field}
	\end{center}
\end{figure}

\begin{figure}[htb] 
  \includegraphics[width=0.7\textwidth]{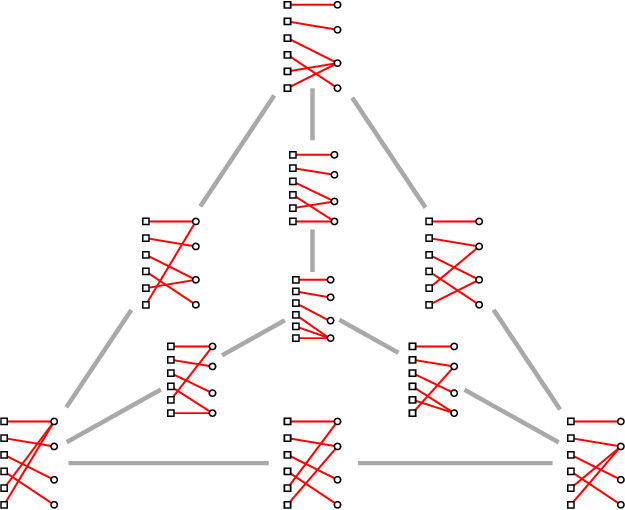}
  \caption{The $(6,4)$-tope arrangement of maximal topes derived from the matching field in Example~\ref{ex:incompatible+linkage+covectors}.
	There is a natural bijection with the lattice points of $2\Delta_3$ via their right degree vector minus $\unit{[4]}$.}
  \label{fig:non+polyhedral+tope+arrangement}
\end{figure}

\begin{figure}[htb] 
  \includegraphics[width=0.6\textwidth]{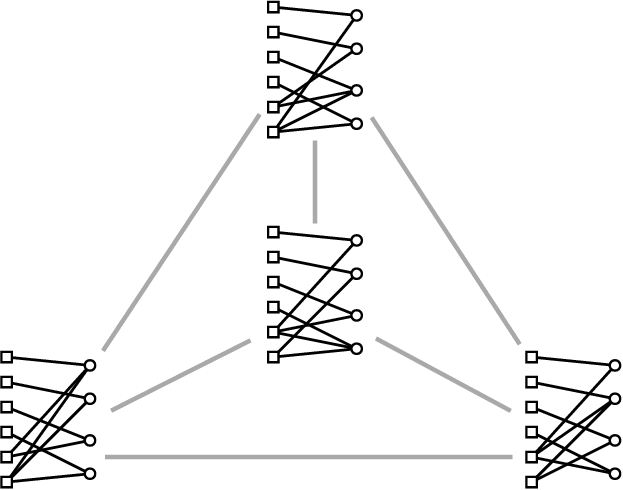}
  \caption{The tope linkage covectors that the maximal topes in Figure \ref{fig:non+polyhedral+tope+arrangement} are derived from.
	There is a natural bijection with the lattice points of $\Delta_3$ via their right degree vector minus $\unit{[4]}$.}
  \label{fig:incompatible+linkage+covectors}
\end{figure}

This warrants the introduction of a new distinct class of matching fields: we say an $(n,d)$-matching field is \emph{polyhedral} if it can be obtained from a triangulation of $\Dprod{n-1}{d-1}$ via the extraction method.
Polyhedral matching fields are a distinct subclass of linkage matching fields that contain coherent matching fields as a distinct subclass. 
Additionally, \cite[Proposition 2.3]{SturmfelsZelevinsky:1993} gives an example of a non-polyhedral $(5,3)$-matching field, the smallest values of $(n,d)$ for which such a matching field exists.

Figure \ref{fig:matching+field+classes} shows the relationship between these classes and triangulations of $\Dprod{n-1}{d-1}$.
Note that there is a variety of equivalent representations: we can replace (regular) triangulations of $\Dprod{n-1}{d-1}$ with either (regular) fine mixed subdivisions of $n\Delta_{d-1}$~\cite{Santos:2005}, (coherent) matching ensembles~\cite{OhYoo-ME:2013}, or generic tropical oriented matroids (generic tropical hyperplane arrangements in the regular case)~\cite{Horn1}.
	
\begin{figure}
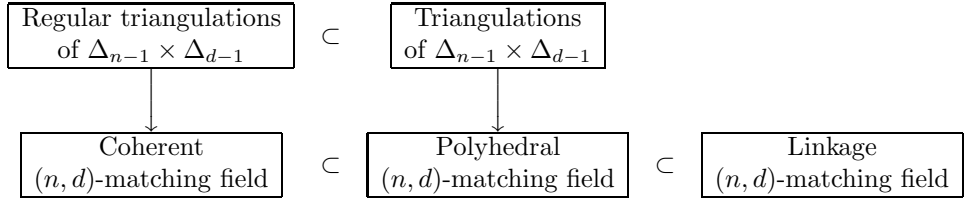

\begin{tabular}{ccccc}
\begin{tabular}{|c|} \hline Regular triangulations \\ of $\Dprod{n-1}{d-1}$ \\ \hline \end{tabular} & $\subset$ & \begin{tabular}{|c|} \hline Triangulations \\ of $\Dprod{n-1}{d-1}$ \\ \hline \end{tabular} & & \\
$\bigg\downarrow$ & & $\bigg\downarrow$ & & \\
\begin{tabular}{|c|} \hline Coherent \\ $(n,d)$-matching field \\ \hline \end{tabular} & $\subset$ & \begin{tabular}{|c|} \hline Polyhedral \\ $(n,d)$-matching field \\ \hline \end{tabular} & $\subset$ & \begin{tabular}{|c|} \hline Linkage \\ $(n,d)$-matching field \\ \hline \end{tabular}
\end{tabular}
\caption{The relationship between triangulations of $\Dprod{n-1}{d-1}$ and different classes of matching fields. The maps shown correspond to the extraction method.}
\label{fig:matching+field+classes}
\end{figure}

Proposition \ref{prop:i-amalgamation-linkage} and Theorem \ref{thm:constructed+topes} have the following interpretation in terms of triangulations of $\Delta_{n-1} \times \Delta_{d-1}$.
Recall that, by the construction in Section \ref{sec:matching+fields+from+triangulations}, every bipartite graph $G$ on $\lnoset \sqcup \rnoset$ can be associated to a subpolytope $P(G)$ of $\Dprod{n-1}{d-1}$.
Given a tope field $\cM = (M_{\sigma})$ of thickness $k$, the corresponding polytope $P(M_{\sigma})$ is a $(k-1)$-dimensional simplex by \cite[Lemma 12.5]{Postnikov:2009}.

Given some $(k+1)$-subset $\tau \subseteq \lnoset$, the polytope $P(C_{\tau})$ is the convex hull of those $P(M_{\sigma})$ where $\sigma \subset \tau$.
The matching field $\cM$ is linkage if and only if each $C_{\tau}$ is a forest, or equivalently each $P(C_{\tau})$ is a simplex (of dimension $k+d-1$).
By Lemma \ref{lem:enriched-matching}, each $P(C_{\tau})$ contains $P(G_{\tau})$ as a face of dimension $k$.
Explicitly, it is the unique simplicial face of $P(C_{\tau})$ that maximises the linear functional $\sum_{j \in \tau} x_{\lno_j,\rno_i}$ and intersects the interior of $P(K_{\tau,\rnoset}) = \Dprod{\tau}{d-1}$.
Furthermore, its existence is only guaranteed if $P(C_{\tau})$ is a simplex.

Proposition \ref{prop:i-amalgamation-linkage} implies that for each $(k+2)$-subset $\rho \subseteq \lnoset$, the convex hull of $P(G_{\tau})$, where $\tau \subset \rho$, is also a simplex.
This allows us to iterate the procedure until we have a set of $(n-1)$-dimensional simplices corresponding to the maximal topes of the matching field.
Furthermore each of these topes are compatible and so these simplices form a simplicial complex $\Sigma(\cM)$.
This is not necessarily true for the simplices $P(C_{\tau})$ as Example~\ref{ex:incompatible+linkage+covectors} demonstrates, and so $\Sigma(\cM)$ may not extend to a full triangulation of $\Dprod{n-1}{d-1}$.

There has been numerous work concerning when a triangulation of the boundary $\Delta_{n-1}^{(m-1)} \times \Delta_{d-1}$ can be extended to a triangulation of $\Dprod{n-1}{d-1}$ \cite{ArdilaCeballos:2013,Santos:2013,CeballosPadrolSarmiento:2015}.
As a corollary to Proposition \ref{prop:i-amalgamation-linkage}, we get a result concerning the complementary problem of extendibility of subcomplexes of the interior of $\Dprod{n-1}{d-1}$ to triangulations.
We call $P(G) \subseteq \Dprod{n-1}{d-1}$ an \emph{interior polytope} if it intersects the interior of $\Dprod{n-1}{d-1}$.
In terms of bipartite graphs, this is equivalent to $G$ having no isolated nodes. 
In particular, as topes have no isolated nodes then $\Sigma(\cM)$ is a simplicial complex of interior polytopes.

\begin{corollary}
There exists an $(n-1)$-dimensional simplicial complex of interior polytopes of $\Dprod{n-1}{d-1}$ that does not extend to a triangulation of $\Dprod{n-1}{d-1}$.
\end{corollary}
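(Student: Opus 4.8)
The plan is to produce the required complex from the non-polyhedral linkage matching field already isolated in Example~\ref{ex:incompatible+linkage+covectors}, and to show that any triangulation extending it would force incompatible spanning trees to appear as maximal cells. Concretely, let $\cM$ be the $(6,4)$-linkage matching field of Example~\ref{ex:incompatible+linkage+covectors}, so $n=6$ and $d=4$. First I would apply iterated amalgamation (Theorem~\ref{thm:constructed+topes}) to obtain the maximal topes of $\cM$, one for each positive right degree vector of sum $n$; by Theorem~\ref{thm:constructed+topes} these are pairwise compatible, so the simplices $P(G)$ attached to them form a simplicial complex $\Sigma(\cM)$. Each maximal tope has left degree vector $\unit{[n]}$, hence exactly $n$ edges and no isolated nodes, so $P(G)$ is an $(n-1)=5$-dimensional simplex meeting the interior of $\Dprod{5}{3}$. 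Thus $\Sigma(\cM)$ is a $5$-dimensional simplicial complex of interior polytopes, and it only remains to prove non-extendability.

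Assume for contradiction that $\Sigma(\cM)$ extends to a triangulation $\Sigma$ of $\Dprod{5}{3}$. The maximal cells of $\Sigma$ are spanning trees, and by Lemma~\ref{lem:degree-vectors-trees} their right degree vectors are pairwise distinct; comparing their number with the normalised volume $\binom{n+d-2}{d-1}$ (as in \cite[Theorem~12.9]{Postnikov:2009}) shows that every vector $w$ with $\sum_i w_i = n+d-1=9$ and $w_i \geq 1$ occurs as the right degree vector of a unique maximal cell $T_w$. I would then fix an \emph{interior} such vector, i.e. one with all $w_i \geq 2$, matching the right degree vectors of the tope linkage covectors drawn in Figure~\ref{fig:incompatible+linkage+covectors}. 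By Proposition~\ref{prop:unique+contained+tope}, $T_w$ contains, for each $\rno_k$, a maximal tope of right degree vector $w-\unit{[d]\setminus\{k\}}$. This tope and the corresponding maximal tope of $\cM$ (which exists since $w-\unit{[d]\setminus\{k\}}$ is positive of sum $n$) are both faces of $\Sigma$, hence pairwise compatible: any two faces of a triangulation are compatible, because a shared perfect matching lifts to the two maximal cells containing them and these agree by Proposition~\ref{prop:char-triang}(3). As the two topes also share left and right degree vectors, Lemma~\ref{lem:unique-rdv-topes} forces them to coincide.

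Hence the $d$ maximal topes contained in $T_w$ are exactly those of $\cM$, and since a tope linkage covector is the unique tree of its right degree vector determined by its contained maximal topes (Proposition~\ref{prop:unique+contained+tope}), I conclude $T_w = C_w$, the tope linkage covector of $\cM$ of right degree $w$. Carrying this out for the two distinct interior vectors whose covectors share a left degree vector (noted in Example~\ref{ex:incompatible+linkage+covectors}) exhibits two distinct maximal cells $T_w, T_{w'}$ of $\Sigma$ with a common left degree vector; by Lemma~\ref{lem:degree-vectors-trees} they are incompatible, contradicting Proposition~\ref{prop:char-triang}(3). This contradiction shows that $\Sigma(\cM)$ does not extend. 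The main obstacle is the middle step: pinning down that the maximal topes sitting inside the hypothetical cells $T_w$ are precisely the amalgamated topes of $\cM$, and thereby identifying $T_w$ with $C_w$. This rests on the compatibility of arbitrary faces of a triangulation together with the reconstruction of a covector from its contained topes, and it is where the non-polyhedrality recorded in Example~\ref{ex:incompatible+linkage+covectors} is actually converted into an obstruction.
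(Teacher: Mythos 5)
Your proof is correct and follows essentially the same route as the paper, which deduces the corollary from the preceding discussion: $\Sigma(\cM)$ for the non-polyhedral matching field of Example~\ref{ex:incompatible+linkage+covectors} is an interior simplicial complex, and any extending triangulation would be forced (via Proposition~\ref{prop:unique+contained+tope} and the uniqueness of compatible topes) to contain the tope linkage covectors as maximal cells, which is impossible since some of them share left degree vectors. Your write-up merely makes explicit the counting step via \cite[Theorem 12.9]{Postnikov:2009} and the identification $T_w = C_w$ that the paper leaves implicit.
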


This gives a non-extendibility result in the vein of \cite[Theorem 4.3]{CeballosPadrolSarmiento:2015}.
It would be interesting to see if this result was tight, and a similar result to \cite[Theorem 4.2]{CeballosPadrolSarmiento:2015} held for these interior simplicial complexes. 

\subsection{Chow covectors} \label{sec:chow+covectors}

The graphs in the next definition were first introduced in \cite{SturmfelsZelevinsky:1993} in the guise of brackets.
They were used to explicitly describe extremal terms of the Chow form of the variety of complex degenerate $(n \times d)$-matrices, as well as to describe a universal Gr\"{o}bner basis of the ideal generated by the maximal minors of a matrix of indeterminates.
We shall define and consider them in purely combinatorial terms as bipartite graphs.

\begin{definition} \label{def:Chow-covector}
 Let $\cM = (M_{\sigma})$ be a matching field.
For every $(n-d+1)$-subset $\rho \subset \lnoset$ we define the \emph{Chow covector} as the graph of the mapping $\Omega_{\rho} \colon \rho \rightarrow \rnoset$ with
  \[
  \Omega_{\rho}(j) = M_{\bar{\rho}\cup\{j\}}(j) \enspace
  \]
  where $\bar{\rho} = \lnoset \setminus \rho$.
\end{definition}

\begin{remark}
The Chow covectors have a combinatorial characterisation that we shall exploit later.
A graph $G$ on $\lnoset \sqcup \rnoset$ is \emph{transversal} to a matching field $\cM$ if $G \cap M \neq \emptyset$ for all $M \in \cM$.
Bernstein and Zelevinsky \cite[Theorem 1]{BernsteinZelevinsky:1993} showed that the Chow covectors are the minimal transversals to a matching field.
\end{remark}

\begin{figure}[htb]
\includegraphics[width=\columnwidth]{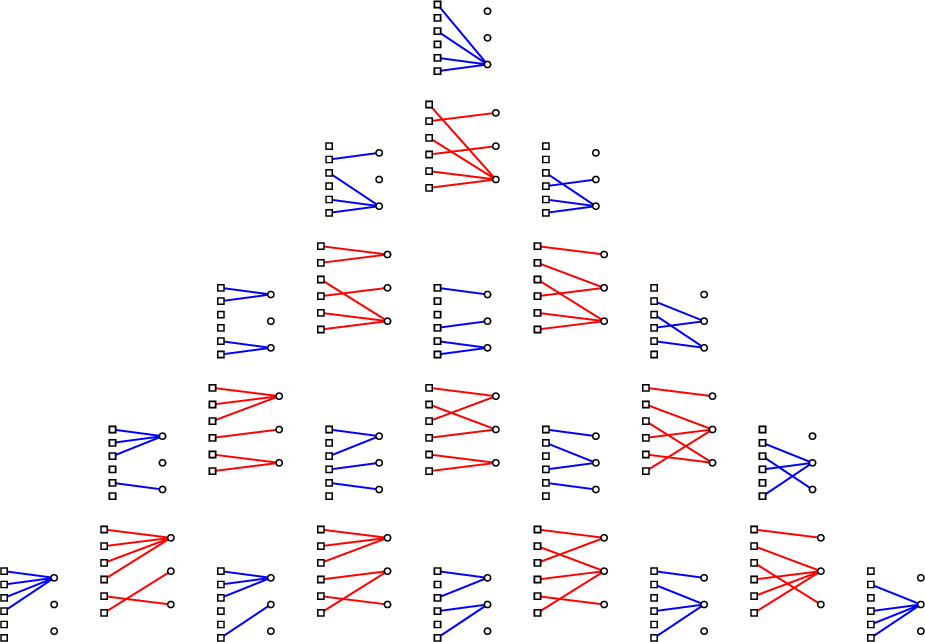}%
\caption{The topes and Chow covectors corresponding to the $(6,3)$-matching field from Figure \ref{fig:nonregular+subdivision}, presented via their bijection to $\simplat{3}{3}$ and $\simplat{4}{3}$ respectively. The topes are coloured red and the Chow covectors are coloured blue.}%
\label{fig:chow+lattice}%
\end{figure}

Again, fix a linkage $(n,d)$-matching field $\cM$.
We will derive the Chow covectors of a linkage matching field from its topes.
To do this we need a statement similar to Lemma~\ref{lem:abstract-containment-sector}.

\begin{lemma} \label{lem:neighbour-topes}
  Let $T_1$ and $T_2$ be two compatible topes with right degree vectors $v$ and $v + \unit{p} - \unit{q}$ (where $v_q \geq 2$). 
  The edges incident with $\rno_q$ in $T_2$ are all contained in $T_1$.
\end{lemma}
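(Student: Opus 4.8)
The plan is to mirror the directed-graph argument that proves the tree analogue, Lemma~\ref{lem:abstract-containment-sector}, exploiting the fact that the degree discrepancy between $T_1$ and $T_2$ is concentrated at the two right nodes $\rno_p$ and $\rno_q$. First I would form the auxiliary directed graph $D$ on $\lnoset \sqcup \rnoset$ by orienting every edge of $T_2 \setminus T_1$ from $\lnoset$ to $\rnoset$ and every edge of $T_1 \setminus T_2$ from $\rnoset$ to $\lnoset$, setting the common edges $T_1 \cap T_2$ aside. Exactly as in the proof of Lemma~\ref{lem:unique-rdv-topes}, any directed cycle in $D$ alternates between edges of $T_2$ and edges of $T_1$ and hence exhibits two distinct perfect matchings on a common node set $J \sqcup I$; compatibility of $T_1$ and $T_2$ therefore forbids a directed cycle in $D$.

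Next I would read off the imbalances. For a right node $\rno_i$ the difference (out-degree minus in-degree) in $D$ equals $\deg_{T_1}(\rno_i) - \deg_{T_2}(\rno_i)$, which vanishes for $i \neq p,q$, equals $+1$ at $\rno_q$ and $-1$ at $\rno_p$. So $\rno_q$ is the unique right source and $\rno_p$ the unique right sink. Each left node is matched exactly once in each tope (this is where the structure of the relevant topes enters, see below), so its in- and out-degree in $D$ coincide; contracting each left node splices its incoming $T_1$-arc to its outgoing $T_2$-arc and yields a directed multigraph $D'$ on $\rnoset$ whose only imbalance is the source $\rno_q$ and the sink $\rno_p$, and in which a directed cycle still certifies incompatibility.

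Now suppose, for contradiction, that some edge $(\lno_s,\rno_q)$ of $T_2$ is \emph{not} an edge of $T_1$; since $v_q \geq 2$ this is a priori possible. In $D'$ this edge becomes an arc into $\rno_q$, so $\rno_q$ has positive in-degree. I would add a dummy arc $\rno_p \to \rno_q$, which makes every vertex of $D'$ balanced, and then decompose the edge set into edge-disjoint simple directed cycles. The dummy arc lies on exactly one such cycle $C_0$, and deleting it from $C_0$ leaves the unique directed path $\rno_q \rightsquigarrow \rno_p$; being simple, $C_0$ enters $\rno_q$ only through the dummy arc. Hence the genuine arc into $\rno_q$ arising from $(\lno_s,\rno_q)$ lies on a \emph{different} cycle $C_1$, which is a directed cycle of $D'$ itself. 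This contradicts compatibility, so no such edge exists and every edge of $T_2$ incident with $\rno_q$ is an edge of $T_1$.

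The delicate step, which I expect to be the main obstacle, is the contraction of left nodes: it is legitimate only because each left node is covered equally often by $T_1$ and $T_2$, so that it acts as a pass-through vertex rather than an extra source or sink that could absorb the offending arc into a path from $\rno_q$ to $\rno_p$ instead of a cycle. This holds for the maximal topes of a linkage matching field to which the lemma is applied, where every left node has degree one in both topes. Alternatively, one may avoid the reduction and instead contract the common edges $T_1 \cap T_2$ directly, as in Lemma~\ref{lem:abstract-containment-sector}, and argue that under the standing assumption every vertex of the contracted graph acquires an outgoing arc, again forcing a cycle. The hypothesis $v_q \geq 2$ serves only to ensure that $\rno_q$ still meets an edge of $T_2$, so that the conclusion is not vacuous.
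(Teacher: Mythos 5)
Your proof is correct and rests on the same core mechanism as the paper's: orient the symmetric difference of $T_1$ and $T_2$ so that the two topes contribute arcs in opposite directions, note that the only right-node imbalances sit at $\rno_p$ and $\rno_q$, and exhibit a directed (hence alternating) cycle that contradicts compatibility. The only real divergence is in how the cycle is produced. The paper stays in the bipartite digraph and argues that, under the contradiction hypothesis, every non-isolated node has out-degree at least one (left nodes because they are balanced, $\rno_q$ by assumption, $\rno_p$ because its out-degree exceeds its in-degree by one), so following out-arcs must close up into a cycle; you instead contract the left nodes, add a dummy arc $\rno_p\to\rno_q$ to balance the multigraph on $\rnoset$, and use an Eulerian cycle decomposition to isolate a genuine cycle avoiding the dummy arc. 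Both are valid; the paper's version is a one-liner and avoids the contraction, while yours makes the source/sink structure at $\rno_q$ and $\rno_p$ explicit (and you even note the paper's shortcut as an alternative). Your caveat about left degrees is well placed and not merely cosmetic: if the two topes are allowed different left supports one can build compatible counterexamples to the statement, so the lemma implicitly assumes each left node has the same degree in both topes; the paper uses this silently in the assertion that every node in $\lnoset$ has in-degree equal to out-degree, and in the intended application to maximal topes (Proposition~\ref{prop:construct-Chow}) all left degrees equal $1$, so nothing is lost.
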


\begin{proof}
  We define an auxiliary directed graph $H$ on $\lnoset \sqcup \rnoset$. The set of arcs is given by the edges of $T_1 \setminus T_2$ directed from $\lnoset$ to $\rnoset$ and of $T_2 \setminus T_1$ directed from $\rnoset$ to $\lnoset$.
  Observe the following degree properties of $H$.
	Every node in $\lnoset$ has in-degree equal to out-degree equal to either $1$ or $0$.
	Every node in $\rnoset \setminus \{\rno_p, \rno_q\}$ has in-degree equal to out-degree.

  Now, assume that the claim does not hold, which means that $\rno_q$ has in-degree bigger or equal to $2$.
	As the out-degree of $\rno_q$ is just $1$ less than the in-degree, it has out-degree at least $1$.
  Consequently, since the sum of the right degree of $T_1$ and $T_2$ are the same, the out-degree of $\rno_p$ has to be bigger or equal to $2$.
  Hence, all non-isolated nodes have out-degree at least $1$ which means that $H$ contains a directed cycle.
  This is necessarily alternating between edges of $T_1$ and $T_2$.
  However, this contradicts the compatibility of $T_1$ and $T_2$, concluding the claim.
\end{proof}

\begin{proposition} \label{prop:construct-Chow}
Given a linkage matching field $\cM$, there is a unique Chow covector with right degree vector $v$ associated to $\cM$. It can be constructed from the intersection of the maximal topes of $\cM$ with right degree vector $v + \unit{[d] \setminus \{i\}}$ for $i \in [d]$.
\end{proposition}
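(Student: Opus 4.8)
The plan is to identify the graph $G := \bigcap_{i \in [d]} T_i$, where $T_i$ denotes the maximal tope of $\cM$ with right degree vector $v + \unit{[d]\setminus\{i\}}$, with the Chow covector $\Omega_\rho$ for an appropriate $(n-d+1)$-subset $\rho$. By Theorem~\ref{thm:constructed+topes} each $T_i$ exists, is unique, and is compatible with $\cM$; moreover any two of them are compatible, since every partial matching inside a maximal tope is a submatching of a matching of $\cM$ and these are weakly compatible by Proposition~\ref{prop:weak-compatibility}. I would first treat the case that $v$ is strictly positive, so that all the $T_i$ are genuine topes, and address the boundary case at the end.

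First I would pin down the edges of $G$. The right degree vectors satisfy $w^{(i)} := v + \unit{[d]\setminus\{i\}} = w^{(i')} + \unit{i'} - \unit{i}$ for $i \neq i'$, and the $i$-th entry of $w^{(i')}$ is $v_i + 1 \geq 2$. Hence Lemma~\ref{lem:neighbour-topes}, applied with $T_2 = T_i$, $T_1 = T_{i'}$, $q = i$ and $p = i'$, shows that every edge incident with $\rno_i$ in $T_i$ lies in $T_{i'}$ for all $i' \neq i$, and trivially it lies in $T_i$ as well. Therefore $G$ contains all $v_i$ edges at $\rno_i$ coming from $T_i$, for each $i$. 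Conversely any edge of $G$ ending at $\rno_m$ lies in $T_m$ and so is one of those $v_m$ edges. Thus $G$ is exactly the disjoint union over $m$ of the edge sets at $\rno_m$ in $T_m$; it has right degree vector $v$, its $n-d+1$ edges meet each left node at most once (every $T_i$ has left degrees $1$), and so its left degree vector is $\unit{\rho}$ for a unique $(n-d+1)$-subset $\rho \subseteq \lnoset$.

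The heart of the argument is then a counting step that recognises $G$ as a Chow covector. Fix $j \in \rho$ and let $\rno_m = G(j)$. Since $G \subseteq T_m$ carries all $v_m$ edges at $\rno_m$, these already account for the full degree $v_m$ of $\rno_m$ in $T_m$; as $G$ is supported on $\rho$, no node of $\bar\rho := \lnoset\setminus\rho$ is joined to $\rno_m$ in $T_m$. Comparing the degrees of the remaining right nodes then forces $T_m$ to restrict to a bijection from the $(d-1)$-set $\bar\rho$ onto $\rnoset\setminus\{\rno_m\}$, so that $T_m|_{\bar\rho\cup\{j\}}$ is a perfect matching on $(\bar\rho\cup\{j\})\sqcup\rnoset$. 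Because $T_m$ is compatible with $\cM$, this perfect matching coincides with $M_{\bar\rho\cup\{j\}}$, whence $G(j) = \rno_m = M_{\bar\rho\cup\{j\}}(j) = \Omega_\rho(j)$ by Definition~\ref{def:Chow-covector}. As $j \in \rho$ was arbitrary, $G = \Omega_\rho$, a Chow covector of right degree vector $v$.

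For uniqueness I would argue that the construction is canonical and absorbs every competitor. The topes $T_i$ are uniquely determined by $v$ (Theorem~\ref{thm:constructed+topes}), so $G$ depends only on $v$; it therefore suffices to show that an arbitrary Chow covector $\Omega_{\rho'}$ with right degree vector $v$ is contained in each $T_i$, for then $\Omega_{\rho'} \subseteq G$ and equality follows since both graphs have $n-d+1$ edges. This containment is the main obstacle: unlike in the existence step, for a general $\rho'$ one must show that each matching $M_{\bar{\rho'}\cup\{j\}}$ defining an edge of $\Omega_{\rho'}$ is recovered on the relevant nodes by $T_i$, which I expect to obtain by combining the weak compatibility of the matchings $M_{\bar{\rho'}\cup\{j\}}$ (Proposition~\ref{prop:weak-compatibility}) with the containment Lemmas~\ref{lem:abstract-containment-sector} and~\ref{lem:neighbour-topes}. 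Finally, when $v$ has a zero coordinate the tope $T_i$ degenerates; this is dealt with by either admitting the corresponding boundary tope with $\rno_i$ isolated or by passing to the sub-tope field on the support of $v$, after which the same three steps apply verbatim.
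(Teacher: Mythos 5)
Your construction of $G=\bigcap_i T_i$ and the identification $G=\Omega_\rho$ is correct and is essentially the paper's own argument: the same appeal to Lemma~\ref{lem:neighbour-topes} shows that the edges of $G$ at $\rno_m$ are exactly the $v_m$ edges at $\rno_m$ in $T_m$, and the same degree count exhibits $T_m\setminus G$ as a perfect matching on $\bar\rho\sqcup(\rnoset\setminus\{\rno_m\})$, so that adjoining $(\lno_j,\rno_m)$ and invoking compatibility of $T_m$ with $\cM$ (the paper cites Lemma~\ref{lem:enriched-matching} at this point, to the same effect) yields $G(j)=M_{\bar\rho\cup\{j\}}(j)=\Omega_\rho(j)$. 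Your treatment of the degenerate case $v_i=0$ also agrees in substance with the paper's.

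The gap is in your uniqueness step, and you have correctly located it yourself: you need that an \emph{arbitrary} Chow covector $\Omega_{\rho'}$ with right degree vector $v$ is contained in every $T_i$, and none of the tools you cite delivers this. Proposition~\ref{prop:weak-compatibility} only controls matchings restricted to $\sigma\cap\sigma'$; Lemma~\ref{lem:abstract-containment-sector} is about pairs of compatible trees; and Lemma~\ref{lem:neighbour-Chow} cannot be used here because its proof already relies on Proposition~\ref{prop:construct-Chow}, so the argument would be circular. The issue is real: for a general $\rho'$ the matchings $M_{\bar{\rho'}\cup\{j\}}$ need not be restrictions of $T_i$ (a maximal tope only restricts to a perfect matching on those $d$-subsets that select one neighbour of each right node), so there is no direct route to the containment. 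The clean repair is the one the paper uses in Theorem~\ref{thm:bijection-chow-covector-lattice-points}: your construction already shows that the map sending a Chow covector to its right degree vector is surjective onto the lattice points of $(n-d+1)\Delta_{d-1}$, and since both sets have cardinality $\binom{n}{n-d+1}=\binom{n}{d-1}$, the map is a bijection; injectivity is precisely the uniqueness you are after. With that substitution your proof is complete.
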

\begin{proof}
  Let $v$ be a vector of non-negative integers with coordinate sum $n - d +1$. 
  We fix a node $\rno_i \in \rnoset$ and consider the graph obtained by intersecting the topes with right degree vectors $v + \unit{[d] \setminus \{i\}}$.
  If $v_i = 0$, there is no tope with right degree vector $v + \unit{[d] \setminus \{i\}}$.
  Instead, we delete any edge adjacent to $\rno_i$ from the graph.
  Denote this graph by $G$.
  By Lemma~\ref{lem:neighbour-topes}, the topes which we intersect agree on the edges of the tope where a given node has degree $v_i$.
	For an example, see the red graphs surrounding a blue graph in Figure~\ref{fig:chow+lattice}. 
  Hence, $G$ has $n-d+1$ edges, each node $\rno_i \in \rnoset$ has degree $v_i$ and there are $n - d +1$ nodes in $\lnoset$ with degree $1$, all others having degree~$0$.

  Let $\rho \subset \lnoset$ be the set of left nodes with degree 1 in~$G$.
	We claim that $G$ is the Chow covector $\Omega_{\rho}$. 
  Fix an $\ell_j \in \rho$ adjacent to the node $\rno_i$. The maximal tope with right degree vector $v + \unit{[d] \setminus \{i\}}$ exists, and contains a matching on $\left(\lnoset\setminus\rho\right) \cup \{\ell_j\}$.
	Explicitly, it is precisely the union of the edges not contained in $G$ with $(\lno_j,\rno_i)$.
  This has to be a matching of the matching field by Lemma~\ref{lem:enriched-matching}.
  Hence, $\Omega_{\rho}$ is a subgraph of $G$ and, hence, equal because of the given degrees of $G$.
\end{proof}

Consider the dilated simplex $n\Delta_{d-1}$ with its canonical embedding into $\RR^d$.
We denote the set of integer lattice points of $n\Delta_{d-1}$ by $\simplat{n}{d}$.
Observe that Theorem \ref{thm:constructed+topes} gives a bijection between the set of maximal topes of a matching field and $\simplat{n-d}{d}$ by mapping a tope with right degree vector $v$ to the lattice point $v - \unit{[d]}$.
Sturmfels and Zelevinsky~\cite[Conjecture 6.10]{SturmfelsZelevinsky:1993} conjectured a similar bijection for Chow covectors.
The construction in Proposition \ref{prop:construct-Chow} allows us to complete the proof of this conjecture.

\begin{theorem} \label{thm:bijection-chow-covector-lattice-points}
The map from the Chow covectors of a linkage $(n,d)$-matching field to the lattice points of $(n-d+1)\Delta_{d-1}$ is a bijection.
\end{theorem}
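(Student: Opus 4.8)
The substance of this theorem is already contained in Proposition~\ref{prop:construct-Chow}, so the plan is to package it as a counting corollary, with the only genuine work being to pin down the range of the map and to match cardinalities. First I would make the map precise: by Definition~\ref{def:Chow-covector}, a Chow covector $\Omega_\rho$ is the graph of a function $\rho \to \rnoset$ where $|\rho| = n-d+1$, so each left node in $\rho$ has degree $1$ and each left node outside $\rho$ has degree $0$. Consequently its right degree vector is a vector of nonnegative integers whose coordinate sum is exactly $|\rho| = n-d+1$, i.e.\ a point of $\simplat{n-d+1}{d}$, the lattice points of $(n-d+1)\Delta_{d-1}$. Thus the map (Chow covector $\mapsto$ right degree vector) is well defined with the claimed codomain.

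Next I would count both sides. On the domain, the Chow covectors are indexed by the $(n-d+1)$-subsets $\rho \subseteq \lnoset$, and distinct subsets yield distinct covectors, since $\rho$ is recovered as the support of the left degree vector of $\Omega_\rho$; hence there are exactly $\binom{n}{\,n-d+1\,}$ Chow covectors. On the codomain, a stars-and-bars count gives $\binom{(n-d+1)+(d-1)}{d-1} = \binom{n}{d-1} = \binom{n}{\,n-d+1\,}$ lattice points in $(n-d+1)\Delta_{d-1}$. So the two finite sets have equal cardinality, which reduces the theorem to establishing either surjectivity or injectivity of the map.

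Finally I would invoke Proposition~\ref{prop:construct-Chow}: for every $v$ with nonnegative integer coordinates summing to $n-d+1$ — equivalently every lattice point of $(n-d+1)\Delta_{d-1}$ — it produces a Chow covector with right degree vector $v$, built as the intersection of the maximal topes of $\cM$ with right degree vectors $v + \unit{[d]\setminus\{i\}}$, whose existence is guaranteed by Theorem~\ref{thm:constructed+topes} (each such vector has coordinate sum $n$). This is precisely surjectivity of the map, and a surjection between finite sets of equal cardinality is automatically a bijection, which proves the theorem. Alternatively, one can bypass the counting step entirely and read injectivity directly off the uniqueness clause of Proposition~\ref{prop:construct-Chow}, since that clause says the fibre over each $v$ consists of a single Chow covector.

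I do not expect a genuine obstacle here, as the difficult combinatorics (the intersection-of-topes construction and its uniqueness) lives inside Proposition~\ref{prop:construct-Chow} and Theorem~\ref{thm:constructed+topes}. The only points demanding care are confirming that the right degree vector of a Chow covector always lands in $\simplat{n-d+1}{d}$ and that the two cardinalities genuinely agree; after that, the bijection follows formally from surjectivity together with the pigeonhole principle for finite sets of equal size.
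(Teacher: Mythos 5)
Your proposal is correct and follows essentially the same route as the paper: surjectivity via the explicit construction in Proposition~\ref{prop:construct-Chow}, combined with the observation that both the set of Chow covectors and $\simplat{n-d+1}{d}$ have cardinality $\binom{n}{n-d+1}$. The extra details you supply (well-definedness of the codomain and the stars-and-bars count) are left implicit in the paper but are accurate.
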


\begin{proof}
For each element of $\simplat{n-d+1}{d}$, Proposition~\ref{prop:construct-Chow} gives us an explicit construction of the Chow covector with that right degree vector, therefore this map is surjective.
Furthermore, the set of Chow covectors and $\simplat{n-d+1}{d}$ both have cardinality $\binom{n}{n-d+1}$, therefore this map is a bijection.
\end{proof}

Observe that the construction in Proposition \ref{prop:construct-Chow} can be inverted.

\begin{corollary} \label{cor:chow+determines+field}
A linkage matching field is uniquely determined by its set of Chow covectors.
\end{corollary}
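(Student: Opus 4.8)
The plan is to reconstruct each individual matching $M_\sigma$ directly from the collection of Chow covectors by reading the defining formula $\Omega_{\rho}(j) = M_{\bar{\rho}\cup\{j\}}(j)$ backwards. The underlying point is that every edge of every matching in $\cM$ occurs as an edge of a suitably chosen Chow covector, so knowing all the $\Omega_{\rho}$ pins down all the $M_\sigma$, and hence the whole field.

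First I would observe that a Chow covector, regarded as an abstract bipartite graph, already records its own index: since $\Omega_{\rho}$ is the graph of a function $\rho \to \rnoset$, its set of non-isolated left nodes is exactly $\rho$. Thus from the unindexed \emph{set} of Chow covectors we can recover the indexed family $(\Omega_{\rho})_{\rho \in \binom{\lnoset}{n-d+1}}$, which is what we need in order to apply the defining equation selectively.

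Next, fix a $d$-subset $\sigma \subseteq \lnoset$ together with an element $j \in \sigma$, and set $\rho := \lnoset \setminus (\sigma \setminus \{j\})$. Then $|\rho| = n-(d-1) = n-d+1$ and $j \in \rho$, so $\Omega_{\rho}$ lies in our set, while $\bar{\rho} \cup \{j\} = (\sigma \setminus \{j\}) \cup \{j\} = \sigma$. The defining equation of Definition~\ref{def:Chow-covector} therefore yields $\Omega_{\rho}(j) = M_{\sigma}(j)$. Letting $j$ range over $\sigma$ recovers the matching $M_\sigma$ edge by edge, and letting $\sigma$ range over all $d$-subsets recovers the entire matching field.

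I do not expect a genuine obstacle here: the argument is a pure inversion of the definition of a Chow covector, and the real substance has already been supplied by Proposition~\ref{prop:construct-Chow} and Theorem~\ref{thm:bijection-chow-covector-lattice-points}, which guarantee that the Chow covectors exist, are unique, and form the expected set in the first place. An alternative and equivalent route would be to invert Proposition~\ref{prop:construct-Chow} literally, first reconstructing the maximal topes from the Chow covectors and then reading off the matchings as submatchings of those topes via Lemma~\ref{lem:enriched-matching}; but the direct reading of the definition is the shortest path and makes the uniqueness transparent.
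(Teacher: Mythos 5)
Your proof is correct, but it takes a genuinely different and more elementary route than the paper's. You simply invert Definition~\ref{def:Chow-covector}: since $\Omega_{\rho}$ is the graph of a function on $\rho$, its set of non-isolated left nodes is exactly $\rho$, so the unindexed set of Chow covectors recovers the indexed family, and then the choice $\rho = \lnoset \setminus (\sigma \setminus \{j\})$ gives $\bar{\rho}\cup\{j\}=\sigma$ and hence $M_{\sigma}(j)=\Omega_{\rho}(j)$ directly. Note that your argument never uses the linkage hypothesis, so it in fact proves the stronger statement that \emph{any} matching field is determined by its set of Chow covectors. The paper instead reconstructs the maximal topes as unions of Chow covectors with right degree vectors $v - \unit{[d]\setminus\{i\}}$ and then reads the matchings off the topes via the last part of the construction in Proposition~\ref{prop:construct-Chow} (it also sketches a second, non-constructive proof via hypergraph blockers). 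What the paper's heavier route buys is reusability: the same union-of-Chow-covectors reconstruction is invoked again in the cryptomorphism of Theorem~\ref{thm:cryptomorphism+linkage+mf}, where one starts from a tope arrangement satisfying only local degree and compatibility conditions and cannot appeal to the defining formula of Definition~\ref{def:Chow-covector} relative to a known matching field. Your direct inversion is the shortest path to the corollary as stated, but it does not generalise to that setting.
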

\begin{proof}
  Given the set of Chow covectors of a linkage matching field, we can recover the tope with right degree vector $v$ by taking the union of the Chow covectors with right degree vector $v - \unit{[d]\setminus\{i\}}$.
  By the last part of the construction in Proposition \ref{prop:construct-Chow}, we know that the topes contain a matching on all $d$-subsets of $L$.
	As the topes are all compatible this implies that those matchings form exactly the matching field.
\end{proof}

The previous corollary has an alternative non-constructive proof by considering the linkage matching field $\cM$ as a hypergraph with vertices and hyperedges
\begin{align*}
V(\cM) &= \SetOf{(\lno_j,\rno_i)}{\lno_j \in \lnoset \ , \ \rno_i \in \rnoset} \\
E(\cM) &= \SetOf{M_{\sigma}}{\sigma \in \binom{[n]}{d}} \enspace .
\end{align*}
The Chow covectors are the \emph{blocker} $b(\cM)$ of $\cM$, the hypergraph whose edges are minimal transversals to $\cM$.
As $b(b(\cM)) = \cM$ for any hypergraph with incomparable edges \cite[Theorem 77.1]{Schrijver:2003}, this immediately implies Corollary \ref{cor:chow+determines+field}.

\begin{example} \label{ex:blue-red-Chow-topes}
  Figure \ref{fig:chow+lattice} shows the topes, coloured red, corresponding to the $(6,3)$-matching field derived from the non-regular triangulation illustrated in Figure \ref{fig:nonregular+subdivision} (see Subsection~\ref{sec:matching+fields+from+triangulations}).
  As there is a unique tope for every possible right degree vector, these form a bijection with $\simplat{3}{3}$, the lattice points of the simplex $3\Delta_2$.
  The Chow covectors, coloured blue, of this matching field can be recovered from the topes via the procedure described in the proof of Proposition~\ref{prop:construct-Chow}.
  There is precisely one for every lattice point in $\simplat{4}{3}$ as encoded by their right degree vectors.
  The construction is closely related to the mixed subdivision representation of a triangulation which is given by the Cayley Trick \cite{Santos:2005}.
\end{example}

As we will see in Section~\ref{sec:pairs+of+lattice+points}, the trees encoding a triangulation of $\Delta_{n-1} \times \Delta_{d-1}$ are completely determined by the pairs of lattice points given by their left and right degree vectors.
We show that this also holds for the bijection associated with the Chow covectors.
This is the generalisation of \cite[{Conjecture 6.8 b)}]{SturmfelsZelevinsky:1993} discussed below their claim.
For this, we need two lemmas.

\begin{lemma} \label{lem:Chow-sub-matching-field}
  Given the set of Chow covectors of a linkage matching field on $L \sqcup R$, the set
  \[
	\SetOf{\Omega_{\rho}^{(j)}}{\Omega_{\rho}^{(j)} \text{ restriction of $\Omega_{\rho}$ to } \left(\lnoset \setminus \{\lno_j\}\right) \sqcup \rnoset, \lno_j \in \rho}
  \]
  is the set of Chow covectors of the induced submatching field on $\left(L \setminus \{\lno_j\}\right) \sqcup R$.
\end{lemma}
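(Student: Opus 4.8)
The plan is to verify directly from Definition~\ref{def:Chow-covector} that restricting a Chow covector $\Omega_{\rho}$ with $\lno_j \in \rho$ to $(\lnoset \setminus \{\lno_j\}) \sqcup \rnoset$ reproduces a Chow covector of the induced submatching field, and that letting $\rho$ range over all such subsets hits each Chow covector of the submatching field exactly once. Throughout I would write $\lnoset' = \lnoset \setminus \{\lno_j\}$, let $\cM'$ denote the induced submatching field on $\lnoset' \sqcup \rnoset$, and for $\rho' \subseteq \lnoset'$ write $\Omega'_{\rho'}$ for the corresponding Chow covector of $\cM'$.

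First I would record that $\cM'$ is again linkage: for every $(d+1)$-subset $\tau \subseteq \lnoset'$ the union of the matchings on $\tau$ computed in $\cM'$ coincides with the one computed in $\cM$, which is a tree by Definition~\ref{def:linkage+covector}. Hence the Chow covectors of $\cM'$ are well defined, and they are indexed by the $(n-d)$-subsets of the $(n-1)$-set $\lnoset'$.

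The core of the argument is a bookkeeping of complements. Fix an $(n-d+1)$-subset $\rho \subseteq \lnoset$ with $\lno_j \in \rho$ and set $\rho' = \rho \setminus \{\lno_j\}$, an $(n-d)$-subset of $\lnoset'$. Since $\lno_j \in \rho$, the node $\lno_j$ has degree one in $\Omega_{\rho}$, so $\Omega_{\rho}^{(j)}$ is exactly the graph of $\Omega_{\rho}$ restricted to $\rho'$. The key identity is that the complement of $\rho'$ inside $\lnoset'$ equals the complement of $\rho$ inside $\lnoset$: writing $\bar{\rho} = \lnoset \setminus \rho$ and $\overline{\rho'} = \lnoset' \setminus \rho'$, the membership $\lno_j \in \rho$ gives $\overline{\rho'} = \bar{\rho}$. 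Consequently, for every $k \in \rho'$ the set $\overline{\rho'} \cup \{k\} = \bar{\rho} \cup \{k\}$ is a $d$-subset avoiding $\lno_j$, so the matchings of $\cM'$ and $\cM$ on it agree, and therefore
\[
\Omega'_{\rho'}(k) = M_{\overline{\rho'} \cup \{k\}}(k) = M_{\bar{\rho} \cup \{k\}}(k) = \Omega_{\rho}(k) \enspace .
\]
This shows $\Omega_{\rho}^{(j)} = \Omega'_{\rho'}$.

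Finally I would observe that $\rho \mapsto \rho \setminus \{\lno_j\}$ is a bijection between the $(n-d+1)$-subsets of $\lnoset$ containing $\lno_j$ and the $(n-d)$-subsets of $\lnoset'$. Combined with the identity above, this gives that $\smallSetOf{\Omega_{\rho}^{(j)}}{\lno_j \in \rho}$ is precisely $\smallSetOf{\Omega'_{\rho'}}{\rho' \in \binom{\lnoset'}{n-d}}$, the set of Chow covectors of $\cM'$. The whole proof is essentially an index computation, and there is no serious obstacle; the only point demanding care is the complement identity $\overline{\rho'} = \bar{\rho}$, which is exactly what forces the union to be taken over subsets $\rho$ containing $\lno_j$ (for $\lno_j \notin \rho$ the node $\lno_j$ is isolated in $\Omega_{\rho}$ and $\rho$ has the wrong cardinality to index a Chow covector of $\cM'$).
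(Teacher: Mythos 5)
Your proof is correct, but it takes a genuinely different route from the paper's. You unwind Definition~\ref{def:Chow-covector} directly: the complement identity $\lnoset'\setminus\rho' = \lnoset\setminus\rho$ for $\rho'=\rho\setminus\{\lno_j\}$ with $\lno_j\in\rho$ gives $\Omega'_{\rho'}(k) = M_{\bar\rho\cup\{k\}}(k) = \Omega_\rho(k)$ immediately, and the bijection $\rho\mapsto\rho\setminus\{\lno_j\}$ on index sets finishes the job. The paper instead invokes the Bernstein--Zelevinsky characterisation of Chow covectors as the \emph{minimal transversals} to the matching field: every matching $\mu$ of the submatching field avoids $\lno_j$, so $\mu\cap\Omega_\rho\neq\emptyset$ forces $\mu\cap\Omega_\rho^{(j)}\neq\emptyset$, making $\Omega_\rho^{(j)}$ a transversal to the submatching field, which by an edge count and minimality must be the Chow covector $\Omega_{\rho\setminus\{\lno_j\}}$. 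Your argument is more elementary and self-contained --- it uses only the defining formula and in particular does not need the linkage hypothesis at all, nor the external transversality theorem --- whereas the paper's argument fits into the blocker/hypergraph-duality viewpoint it develops around Corollary~\ref{cor:chow+determines+field} and avoids any bookkeeping of complements. Your preliminary observation that the induced submatching field is again linkage is not needed for the identity itself, but it is the right thing to record since the induction in Theorem~\ref{thm:lattice-points-Chow} requires it.
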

\begin{proof}
	A matching $\mu$ in the induced submatching field is a matching in the original matching field that does not contain any edges adjacent to $\lno_j$.
	By transversality from~\cite{BernsteinZelevinsky:1993}, $\mu \cap \Omega_{\rho}$ is non-empty.
	As $\mu$ has no edges adjacent to $\lno_j$ this implies $\mu \cap \Omega_{\rho}^{(j)}$ is non-empty as well and, hence, $\Omega_{\rho}^{(j)}$ is a transversal on the set $\rho \setminus \{\lno_j\}$.
	As it contains the same number of edges as a Chow covector, by minimality it must be equal to the Chow covector $\Omega_{\rho \setminus \{\lno_j\}}$.
	Iterating over all $\rho$, we obtain all Chow covectors on $(\lnoset \setminus \{\lno_j\}) \sqcup \rnoset$.
\end{proof}

Finally, we need an analogous lemma to Lemmas~\ref{lem:neighbour-topes} and \ref{lem:abstract-containment-sector}.

\begin{lemma} \label{lem:neighbour-Chow}
  Let $T_1$ and $T_2$ be two Chow covectors with right degree vectors $v$ and $v + \unit{p} - \unit{q}$ (where $v_q \geq 1$) constructed from the same linkage matching field.
  The edges incident with $\rno_q$ in $T_2$ are all contained in $T_1$.
	Furthermore, if $\lno_s$ has degree 1 in both covectors, $T_1$ and $T_2$ agree on the edge adjacent to $\lno_s$.
\end{lemma}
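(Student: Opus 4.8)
The plan is to reduce the entire statement to the maximal-tope construction of Proposition~\ref{prop:construct-Chow} together with the single-swap behaviour of maximal topes already recorded in Lemma~\ref{lem:neighbour-topes}. Recall that all maximal topes of $\cM$ are pairwise compatible and that, by the uniqueness in Theorem~\ref{thm:constructed+topes}, there is exactly one maximal tope of $\cM$ with a given positive right degree vector summing to $n$. The construction in Proposition~\ref{prop:construct-Chow} expresses a Chow covector of right degree $v$ as the intersection of the maximal topes of right degree vector $v + \unit{[d]\setminus\{i\}}$ over those $i$ with $v_i \geq 1$ (deleting all edges at $\rno_i$ when $v_i = 0$). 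Consequently the edges of such a covector incident with $\rno_i$ are precisely those of the maximal tope of right degree $v + \unit{[d]\setminus\{i\}}$, and the covector is a subgraph of each of these maximal topes. These two facts are the only inputs I will need.

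For the first statement I would first dispose of the case $v_q = 1$: then the $q$-th right degree of $T_2$ is $v_q - 1 = 0$, so $\rno_q$ is isolated in $T_2$ and there is nothing to prove. When $v_q \geq 2$, the edges incident with $\rno_q$ in $T_1$ coincide with those of the maximal tope of right degree $v + \unit{[d]\setminus\{q\}}$, while the edges incident with $\rno_q$ in $T_2$ coincide with those of the maximal tope of right degree $(v + \unit{p} - \unit{q}) + \unit{[d]\setminus\{q\}}$. These two right-degree vectors differ exactly by $\unit{p} - \unit{q}$, and the former has $\rno_q$-degree $v_q \geq 2$, so Lemma~\ref{lem:neighbour-topes} applies to the two (compatible) maximal topes and shows that the edges incident with $\rno_q$ in the latter are contained in the former. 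Translating back, the edges incident with $\rno_q$ in $T_2$ lie among those of $T_1$, as required.

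The second statement turns on a short identity between right-degree vectors. Suppose $\lno_s$ has degree $1$ in both covectors, with its incident edge equal to $(\lno_s,\rno_a)$ in $T_1$ and to $(\lno_s,\rno_b)$ in $T_2$; the goal is $\rno_a = \rno_b$. Since $v_q \geq 1$, the covector $T_1$ is a subgraph of the maximal tope of right degree $v + \unit{[d]\setminus\{q\}}$, so $(\lno_s,\rno_a)$ lies in that maximal tope, where $\lno_s$ has degree $1$ and hence $(\lno_s,\rno_a)$ is its unique incident edge. Likewise, since the $p$-th right degree of $v + \unit{p} - \unit{q}$ is $v_p + 1 \geq 1$, the covector $T_2$ is a subgraph of the maximal tope of right degree $(v + \unit{p} - \unit{q}) + \unit{[d]\setminus\{p\}}$, in which $(\lno_s,\rno_b)$ is the unique edge at $\lno_s$. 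The crucial observation is that
\[
 (v + \unit{p} - \unit{q}) + \unit{[d]\setminus\{p\}} = v + \unit{[d]} - \unit{q} = v + \unit{[d]\setminus\{q\}} \enspace ,
\]
so these two maximal topes have the same right degree vector and, by uniqueness, are literally the same tope. In that single tope $\lno_s$ has exactly one incident edge, which must equal both $(\lno_s,\rno_a)$ and $(\lno_s,\rno_b)$; therefore $\rno_a = \rno_b$ and the two covectors agree on the edge at $\lno_s$.

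The only genuine obstacle is spotting this pairing of indices: one must view $T_1$ through its $\rno_q$-tope and $T_2$ through its $\rno_p$-tope precisely so that the two governing right-degree vectors collapse to the common value $v + \unit{[d]\setminus\{q\}}$. It is worth emphasising that, in contrast with Lemma~\ref{lem:neighbour-topes}, the second statement needs neither an auxiliary directed graph nor the compatibility of $T_1$ and $T_2$ directly: it uses only that each covector embeds in the relevant maximal tope, together with the uniqueness of maximal topes. What remains is the routine verification that $v + \unit{[d]\setminus\{q\}}$ and $(v + \unit{p} - \unit{q}) + \unit{[d]\setminus\{p\}}$ are bona fide positive right-degree vectors summing to $n$, which is immediate from $v_q \geq 1$ and $\sum_i v_i = n - d + 1$.
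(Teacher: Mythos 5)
Your proof is correct and takes essentially the same route as the paper's: both claims are reduced to Lemma~\ref{lem:neighbour-topes} together with the tope-intersection construction of Proposition~\ref{prop:construct-Chow}, and the second claim rests on exactly the paper's observation that both covectors are contained in the single maximal tope of right degree vector $v + \unit{[d]\setminus\{q\}}$. You have simply made explicit the index bookkeeping (in particular the identity $(v+\unit{p}-\unit{q})+\unit{[d]\setminus\{p\}} = v+\unit{[d]\setminus\{q\}}$) that the paper's two-line proof leaves implicit.
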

\begin{proof}
  The first claim follows from Lemma~\ref{lem:neighbour-topes} with the construction in Proposition~\ref{prop:construct-Chow}.
For the second, consider the tope with right degree vector $v + \unit{[d] \setminus q}$.
By Proposition \ref{prop:construct-Chow}, both Chow covectors are contained in this tope and so agree with it on the edge adjacent to $\lno_s$.
\end{proof}

Let $\mathcal{M}$ be a linkage matching field.
Define $\varphi_{\mathcal{M}} \colon \binom{[n]}{n-d+1} \rightarrow \simplat{n-d+1}{d}$ to be the bijection mapping $\rho \in \binom{[n]}{n-d+1}$ to the right degree vector of $\Omega_{\rho}$.
We present a proof of Conjecture~6.8~b) of \cite{SturmfelsZelevinsky:1993} that this map uniquely determines the linkage matching field.
To do so, we introduce a combinatorial analogue to sectors of tropical hyperplanes, as discussed in Example \ref{ex:hyperplane+sector}.
	\begin{definition} \label{def:combinatorial+sector}
	Let $\cG$ be a collection of compatible bipartite graphs on $\lnoset \sqcup \rnoset$ with a bijective map $\cG \rightarrow \simplat{k}{d}$ determined by right degree vectors for some $k$.
	The \emph{(open) combinatorial sector} $\cS_j^{(i)}$ is defined as follows:
	\[
	\cS_j^{(i)} = \SetOf{G \in \cG}{(\lno_j,\rno_i) \in G, \lno_j \text{ has degree } 1}
	\]
	\end{definition}
	We have seen multiple classes of bipartite graphs with a bijection to lattice points of dilated simplices, namely trees, topes and Chow covectors, and so we can define combinatorial sectors for any of them.
	Furthermore, all have similar local properties (see Lemmas \ref{lem:abstract-containment-sector},\ref{lem:neighbour-topes} and \ref{lem:neighbour-Chow}) that give a lot of structure to the combinatorial sectors.
	
	In particular, let $\cG = \SetOf{\Omega_{\rho}}{\rho \in \binom{[n]}{n-d+1}}$ where $k=n-d+1$ and consider the combinatorial sectors of the set of Chow covectors.
	For example, Figure \ref{fig:combinatorial+sector} shows the Chow
covectors from Example \ref{ex:blue-red-Chow-topes} with the combinatorial sectors $\cS_1^{(1)}$ and $\cS_1^{(3)}$ highlighted.

\begin{figure}%
\includegraphics[width=0.7\textwidth]{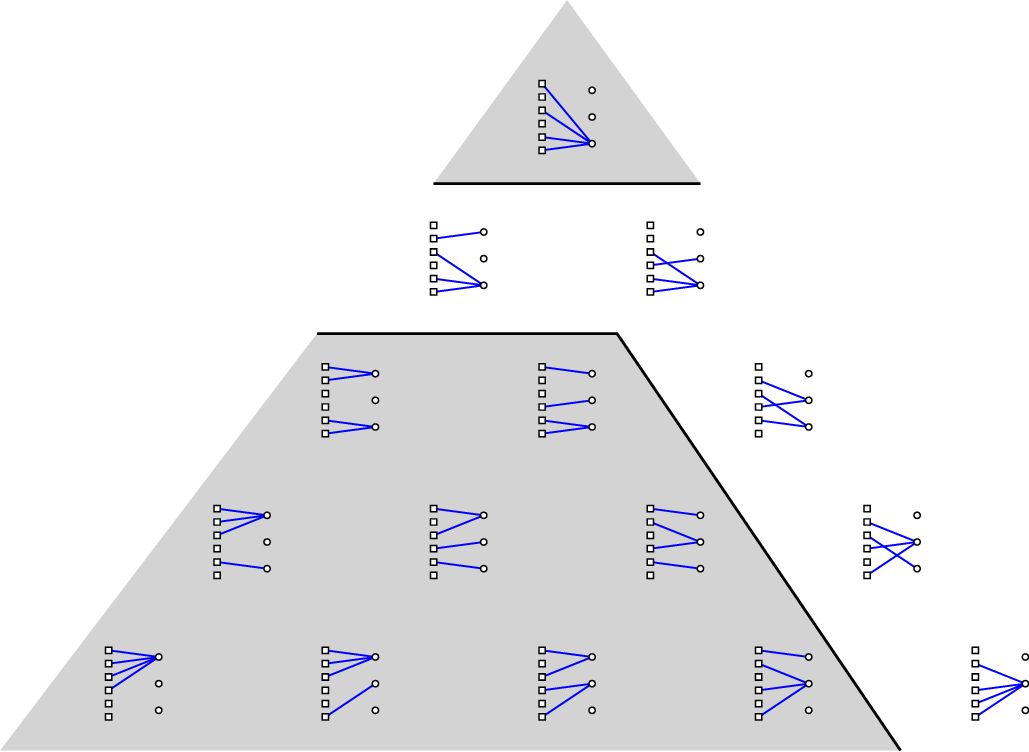}
\caption{Decomposition of the Chow covectors from Example~\ref{ex:blue-red-Chow-topes} depending on the neighbouring vertex of $\lno_1$; it is either adjacent to $\rno_3$, isolated or adjacent to $\rno_1$.
The grey regions are the combinatorial sectors $\cS_1^{(1)}$ and $\cS_1^{(3)}$.
Note that $\cS_1^{(2)}$ is empty as the edge $(\lno_1,\rno_2)$ appears in no Chow covector.}%
\label{fig:combinatorial+sector}%
\end{figure}

\begin{theorem} \label{thm:lattice-points-Chow}
  A linkage matching field can be uniquely determined by its map~$\varphi$.
\end{theorem}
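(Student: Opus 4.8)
The plan is to show that the \emph{graphs} of the Chow covectors themselves are reconstructible from $\varphi$, after which Corollary~\ref{cor:chow+determines+field} immediately yields the claim. The map $\varphi$ already records, for every lattice point $v \in \simplat{n-d+1}{d}$, both its fibre $\rho = \varphi^{-1}(v) \in \binom{[n]}{n-d+1}$ (the support, i.e.\ the set of left nodes of degree $1$ in $\Omega_\rho$) and the right degree vector $v$ of $\Omega_\rho$. The only information missing is, for each $\lno_j \in \rho$, the value $\Omega_\rho(j) \in \rnoset$; equivalently, for each $j$ I must recover which combinatorial sector $\cS_j^{(i)}$ of Definition~\ref{def:combinatorial+sector} the covector $\Omega_\rho$ lies in.

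First I would record the monotonicity of the sectors encoded by Lemma~\ref{lem:neighbour-Chow}. Writing $T = \Omega_{\varphi^{-1}(\tilde v)}$ and $T' = \Omega_{\varphi^{-1}(v)}$ for $\tilde v = v + \unit{i} - \unit{p}$, and applying the first part of Lemma~\ref{lem:neighbour-Chow} (the nesting of the edges at the coordinate that decreases, here $i$) shows that the sector $\cS_j^{(i)}$, viewed through $\varphi$ as a subset of $\simplat{n-d+1}{d}$, is \emph{up-closed in direction $i$}: if $(\lno_j,\rno_i) \in \Omega_\rho$ with $v = \varphi(\rho)$ and $v_p \geq 1$, then $(\lno_j,\rno_i)$ is again an edge of the covector with right degree vector $v + \unit{i} - \unit{p}$. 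Reading this contrapositively gives a purely $\varphi$-computable \emph{exclusion rule}: if there is some $p$ with $v_p \geq 1$ for which $\lno_j \notin \varphi^{-1}(v + \unit{i} - \unit{p})$, then $\Omega_\rho(j) \neq i$. Together with the trivial constraint that $\Omega_\rho(j) = i$ forces $v_i \geq 1$, this rules out candidate sectors using only the supports recorded by $\varphi$.

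I would then reconstruct the covectors by propagation. The corners $v = (n-d+1)\unit{i}$ are immediate anchors: there every node of $\varphi^{-1}(v)$ is joined to $\rno_i$, so $\Omega_{\varphi^{-1}(v)}$ is known outright. At a general lattice point the exclusion rule pins down $\Omega_\rho(j)$ for some of the $\lno_j \in \rho$, and the degree identity $\#\{\,j \in \rho : \Omega_\rho(j) = i\,\} = v_i$ — whose right-hand side is dictated by $\varphi$ — is then used to force the assignment of the remaining nodes. Because the adjacency graph on $\simplat{n-d+1}{d}$ given by the moves $\unit{p} - \unit{q}$ is connected, and the second part of Lemma~\ref{lem:neighbour-Chow} guarantees that any left node staying in the support keeps its edge, one can hope to carry the reconstruction across the whole simplex.

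The main obstacle is proving that exclusion together with the degree identity leaves \emph{no} residual ambiguity at any lattice point, so that $\varphi$ determines each $\Omega_\rho$ uniquely. This is genuinely global: unlike the tree and tope settings, two Chow covectors at adjacent lattice points may differ in more than one edge, so there is no naive single-edge local propagation to fall back on. I expect to resolve it by induction on $n$ via Lemma~\ref{lem:Chow-sub-matching-field}: deleting a left node $\lno_j$ produces a linkage $(n-1,d)$-matching field whose Chow covectors are exactly the restrictions of those of $\cM$, and whose associated map is the coordinate shift $\rho \setminus \{\lno_j\} \mapsto \varphi(\rho) - \unit{\Omega_\rho(j)}$ governed by the sector data above; the inductive hypothesis then recovers the restricted covectors, and assembling these over all choices of deleted node recovers every edge $\Omega_\rho(j)$. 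With all Chow covectors in hand, Corollary~\ref{cor:chow+determines+field} finishes the proof.
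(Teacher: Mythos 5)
Your overall architecture coincides with the paper's: reduce to reconstructing the Chow covectors and invoke Corollary~\ref{cor:chow+determines+field}, anchor at the corner lattice points $(n-d+1)\unit{i}$, use the sector monotonicity of Lemma~\ref{lem:neighbour-Chow}, and induct on $n$ via the deletion Lemma~\ref{lem:Chow-sub-matching-field}. The one step you flag as ``the main obstacle'' is, however, exactly the step the paper has to work for, and your proposed resolution does not close it. The exclusion rule plus the degree identity is not shown to determine the assignment $j \mapsto \Omega_\rho(j)$ uniquely: after exclusion you are left with a degree-constrained assignment problem (each $\lno_j \in \rho$ must be sent into its set of non-excluded right nodes, with $v_i$ nodes landing on $\rno_i$), and nothing you have said rules out two distinct feasible assignments, e.g.\ two left nodes with the same two-element candidate set and one slot available for each value. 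More importantly, the fallback you offer is circular: to apply the inductive hypothesis to the deleted $(n-1,d)$-matching field you must first compute its map, which by your own formula $\rho \setminus \{\lno_j\} \mapsto \varphi(\rho) - \unit{\Omega_\rho(j)}$ requires knowing $\Omega_\rho(j)$ for every $\rho \ni \lno_j$ --- that is, the very sector partition $\mathcal{L}_j = \bigsqcup_i \mathcal{L}_j^{(i)}$ you are trying to determine. The induction on $n$ recovers the edges at the \emph{other} left nodes once the sectors of $\lno_j$ are known; it cannot produce those sectors.

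The paper closes this step with a different, directed propagation (Algorithm~\ref{algo:derive+Chow+sectors}). Fix $j$ and a target sector, say $i=1$, and sweep downward in the coordinate $v_1$ starting from the corner $(n-d+1)\unit{1}$, where membership is forced. A pair $(u,v)$ with $u_j=1$ and $v_1 = h$ is declared to lie in $\cS_j^{(1)}$ precisely when some already-accepted pair $(w, v+\unit{1}-\unit{k})$ with $w_j=1$ exists. Soundness uses the \emph{second} part of Lemma~\ref{lem:neighbour-Chow} (two covectors in which $\lno_j$ is a leaf agree on the edge at $\lno_j$), and completeness is the existence of such a predecessor for every covector genuinely containing $(\lno_j,\rno_1)$. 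This is a positive, one-sector-at-a-time certificate computable from $\varphi$ alone, and it is what replaces your ``exclusion plus counting'' heuristic; with the partition in hand, your deletion-and-induction step then goes through exactly as in the paper. So the skeleton of your argument is right, but as written the proof of the key claim --- that $\cS_j^{(i)}$ is determined by $\varphi$ --- is missing, and the route you sketch for it would not succeed.
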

\begin{proof}
  We claim that we can reconstruct the Chow covectors from their left and right degree vector pairs.
	As $\varphi_{\cM}$ is given by these degree vectors, the theorem follows from this claim and Corollary \ref{cor:chow+determines+field}.
	We proceed by induction on $n \geq d$.
	For $n = d$, the matching field consists of one matching and the $n$ Chow covectors are just the edges of the matching.
	The degree vector pair of an edge uniquely determines it, which implies the claim for this case.

  Assume that the claim is true for all linkage $(k,d)$-matching fields with $d \leq k < n$ and let $\mathcal{U}$ be the set of degree vector pairs of the Chow covectors for a linkage $(n,d)$-matching field.
	We get a non-disjoint decomposition
\[
\bigcup_{j \in [n]} \mathcal{L}_j = \mathcal{U} \quad \mbox{for} \quad \mathcal{L}_j = \{(u,v) \mid u_j = 1\} \enspace .
\]
Now fix a $j \in [n]$. There is a partition of $\mathcal{L}_j$ in the sets
\[
\mathcal{L}_j^{(i)} = \{ (u,v) \mid (\lno_j,\rno_i) \in \Omega_{\rho},\;\Omega_{\rho}~\mbox{has degree vector}~(u,v)~\mbox{with}~u_j=1\} \enspace .
\]
Note that $\mathcal{L}_j^{(i)}$ is the image of $\cS_j^{(i)}$ under the map that sends a Chow covector to its degree vector pair.

From $\mathcal{L}_j^{(i)}$ we can construct the set $\overline{\mathcal{L}_j^{(i)}}$ by removing the $j$th entry of the first component and decreasing the $i$th entry of the second component for all the pairs in $\mathcal{L}_j^{(i)}$.
This corresponds to removing the edge $(\lno_j,\rno_i)$.
The resulting set
\[
\overline{\mathcal{L}_j} = \bigcup_{i \in [d]} \overline{\mathcal{L}_j^{(i)}}
\]
is the set of degree pairs of the Chow covectors of the submatching field on $\left(L \setminus \{\lno_j\}\right) \sqcup R$, by Lemma~\ref{lem:Chow-sub-matching-field}.

Here, we can apply induction and deduce that $\overline{\mathcal{L}_j}$ uniquely defines the Chow covectors with the contained degree vectors. From the partition into the $\overline{\mathcal{L}_j^{(i)}}$ we can recover to which node $\lno_j$ is incident in the original Chow covector. Therefore, we can construct all Chow covectors for which $\lno_j$ has degree $1$. Ranging over all $j \in [n]$, we get all the Chow covectors.

\smallskip

It remains to show how to construct the set $\mathcal{L}_j^{(i)}$ for each $\rno_i \in R$, which we now demonstrate.
Assume without loss of generality that $i = 1$ and apply Algorithm~\ref{algo:derive+Chow+sectors}.
\begin{algorithm}[htbp]
  \caption{Construct the degree pairs of a combinatorial sector of Chow covectors}
  \label{algo:derive+Chow+sectors}
  \begin{algorithmic}[1]
  	\Statex \textbf{Input:} $\mathcal{U}$, the set of degree vector pairs of Chow covectors of a linkage $(n,d)$-matching field.
  	\Statex \textbf{Output:} $\mathcal{L}_j^{(1)}$, the set of degree vector pairs with $u_j = 1$ whose Chow covector contains $(\lno_j,\rno_1)$
    \If{$u_j = 1$ for $(u,v)$ with $v_1 = n-d+1$}
    \State $\mathcal{K}_j \gets \{(u,v)\}$ \label{line:initial+K+Chow}
    \Else
    \State \Return $\emptyset$
    \EndIf
    \State $h \gets n-d$
    \While{$h \geq 0$}
    \ForAll{$(u,v) \in \mathcal{L}_j~\mbox{with}~v_1=h$}
    \If{$\exists k \in [d] \colon v_k > 1 \colon \exists\,w^{(k)} \colon (w^{(k)},v+e_1-e_k) \in \mathcal{K}_j$} \label{line:ancestor+Chow}
    \State $\mathcal{K}_j \gets \mathcal{K}_j \cup (u,v)$
    \EndIf
    \State $h \gets h-1$
    \EndFor
    \EndWhile 
    \State \Return $\mathcal{K}_j$
\end{algorithmic}
\end{algorithm}

\textbf{Claim:} $\mathcal{K}_j = \mathcal{L}_j^{(1)}$.

\textbf{Proof by induction} 
There is a unique Chow covector $\Omega_{\rho_0}$ with the right degree vector $(n-d+1)\unit{1}$. 
If $u_j = 1$ then $\lno_j$ is adjacent to $\rno_1$ because of the structure of the right degree vector.
Line~\ref{line:initial+K+Chow} in the algorithm guarantees that $\Omega_{\rho_0}$ is in $\mathcal{K}_j$. Furthermore, the edge $(\lno_j,\rno_1)$ shows that it is also contained in $\mathcal{L}_j^{(1)}$.

Now, assume that $\mathcal{K}_j$ and $\mathcal{L}_j^{(1)}$ agree in all elements whose first entry of the second component is $h+1 \leq n-d+1$. 

	Let $(u,v) \in \mathcal{L}_j$ such that $v_1 = h$ and $(w,v+e_1-e_k) \in \mathcal{K}_j$ an element fulfilling the condition in Line~\ref{line:ancestor}.
	These two vectors are the right degree vectors of two Chow covectors $\Omega_{\rho_1}$ and $\Omega_{\rho_2}$.
	Note that $\mathcal{K}_j \subseteq \mathcal{L}_j$. As, by the induction hypothesis, $(\lno_j,\rno_1)$ is an edge of $\Omega_{\rho_2}$ we can deduce with Lemma~\ref{lem:neighbour-Chow} that this is also an edge of $\Omega_{\rho_1}$.
	Hence, $(u,v)$ is an element of $\mathcal{L}_j^{(1)}$.

	Conversely, let $(u,v) \in \mathcal{L}_j^{(1)}$ be with $v_1 = h$.
Also by Lemma~\ref{lem:neighbour-Chow}, there is a $k \in [d]$ and a $w = \unit{\rho}$ for some $\rho$ such that in the Chow covector with degree pair $(w,v+e_1-e_k)$ the node $\lno_j$ is a leaf and it is adjacent to $\rno_1$.
The induction hypothesis implies that $(w,v+e_1-e_k) \in \mathcal{K}_j$.
Now, Line~\ref{line:ancestor} shows that also $(u,v)$ is an element of $\mathcal{K}_j$.
\end{proof}

\subsection{A cryptomorphic description} \label{sec:cryptomorphism}

In Section \ref{sec:tope+from+matching}, we saw how one can construct a set of compatible topes in bijection with $\simplat{n-d}{d}$ from a linkage $(n,d)$-matching field. A slight generalisation of the proof of Proposition~\ref{prop:construct-Chow} and Corollary~\ref{cor:chow+determines+field} leads to a cryptomorphic description of linkage matching fields in terms of topes. 

\begin{definition}
An \emph{$(n,d)$-tope arrangement} is a set of compatible topes in bijection with $\simplat{n-d}{d}$ via the map that sends a tope with right degree vector $v$ to $v - \unit{[d]}$.
\end{definition}

\begin{example}
  Figure~\ref{fig:non+polyhedral+tope+arrangement} shows a $(6,4)$-tope arrangement which cannot arise from a triangulation of $\Delta_5 \times \Delta_3$.
\end{example}

For the construction of the Chow covectors, one only needs Lemma~\ref{lem:neighbour-topes}. Analogously to Corollary~\ref{cor:chow+determines+field} we get a matching for each $d$-subset.
Combining this with Theorem~\ref{thm:constructed+topes} yields the following.

\begin{theorem} \label{thm:cryptomorphism+linkage+mf}
Linkage $(n,d)$-matching fields and $(n,d)$-tope arrangements are cryptomorphic.
Explicitly:
\begin{itemize}
\item the maximal topes of a linkage matching field form a tope arrangement,
\item the set of all maximal matchings in a tope arrangement form a linkage matching field.
\end{itemize}
\end{theorem}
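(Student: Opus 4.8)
The plan is to exhibit the two constructions in the statement as mutually inverse. Write $\Phi$ for the map sending a linkage matching field $\cM$ to its set of maximal topes, and $\Psi$ for the map sending a tope arrangement $\cT$ to its set of maximal (that is, size-$d$) matchings. For the forward direction I would only need to observe that $\Phi$ genuinely lands in tope arrangements. Theorem~\ref{thm:constructed+topes} already produces, for every positive right degree vector $v$ with $\sum_i v_i = n$, a unique maximal tope compatible with $\cM$, and sending such a tope to $v - \unit{[d]}$ realises the required bijection with $\simplat{n-d}{d}$. The one extra point is pairwise compatibility of these topes: if two of them contained perfect matchings on some $J \sqcup I$ but disagreed there, then extending each inside its own tope to a transversal matching (possible since a maximal tope is a disjoint union of stars) would give two matchings of $\cM$ restricting to distinct matchings on $J \sqcup I$, contradicting the weak compatibility of Proposition~\ref{prop:weak-compatibility}.

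Next I would make sense of $\Psi$. Given a tope arrangement $\cT$, the construction in Proposition~\ref{prop:construct-Chow} uses only the compatibility of the topes (through Lemma~\ref{lem:neighbour-topes}) and the presence of one tope for each right degree vector, so it applies verbatim to $\cT$ and yields a Chow covector $\Omega_\rho$ for every $(n-d+1)$-subset $\rho$. Inverting it exactly as in Corollary~\ref{cor:chow+determines+field} gives, for each $d$-subset $\sigma$, a matching $M_\sigma$: existence is the last paragraph of the proof of Proposition~\ref{prop:construct-Chow} (the tope with right degree vector $v + \unit{[d]\setminus\{i\}}$ carries a matching on $\sigma$), and uniqueness is compatibility, since any two topes carrying a perfect matching on $\sigma \sqcup \rnoset$ agree there. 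This defines a matching field $\cM' = \Psi(\cT)$, whose matchings are moreover compatible.

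The main obstacle is to show that $\cM'$ is linkage. I would use the tree characterisation of Lemma~\ref{lem:linkage+and+tree}: for each $(d+1)$-subset $\tau$ one must check that $\bigcup_{\sigma \subset \tau} M_\sigma$ is a tree in which every right node has degree $2$. That every right node has degree at least $2$ is immediate, since if $\rno_i$ had a single neighbour $a$ across these matchings, the matching on $\tau \setminus \{a\}$ could not match $\rno_i$ to $a$. The content is the upper bound together with acyclicity, and here pairwise compatibility alone does not suffice, as compatible matchings need not be linkage (cf. Figure~\ref{fig:compatible+not+linkage}); one must exploit that $\cT$ is a \emph{complete} arrangement. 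I would argue by induction on $n$, in the spirit of Theorem~\ref{thm:lattice-points-Chow} and Lemma~\ref{lem:Chow-sub-matching-field}: deleting a node of $\lnoset$ outside $\tau$ and passing to the induced sub-arrangement reduces the claim for $\tau$ to a smaller instance, with the base case $n=d+1$ checked directly from the neighbour relation of Lemma~\ref{lem:neighbour-topes}. Equivalently, one reconstructs the degree-two linkage covector on $\tau$ as a union of topes precisely as in Lemma~\ref{lem:enriched-matching}.

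Finally I would verify that $\Phi$ and $\Psi$ are inverse. The identity $\Psi(\Phi(\cM)) = \cM$ is immediate: by Lemma~\ref{lem:enriched-matching} the maximal matchings contained in the maximal topes of $\cM$ are exactly the matchings of $\cM$, one per $d$-subset. For $\Phi(\Psi(\cT)) = \cT$, each tope of $\cT$ is compatible with $\cM' = \Psi(\cT)$ (its matchings are submatchings of topes of $\cT$, which are pairwise compatible), so by the uniqueness clause of Theorem~\ref{thm:constructed+topes} it coincides with the maximal tope of $\cM'$ with the same right degree vector; as these range over all of $\simplat{n-d}{d}$, the two arrangements agree. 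The linkage step of the third paragraph is the only nontrivial part, the rest being a careful bookkeeping of the already-established constructions.
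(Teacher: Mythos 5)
Your overall architecture (forward direction via Theorem~\ref{thm:constructed+topes}; backward direction by running Proposition~\ref{prop:construct-Chow} and Corollary~\ref{cor:chow+determines+field} inside the tope arrangement to extract a compatible matching for every $d$-subset) coincides with the paper's, and those parts are fine. The problem is the step you yourself single out as the crux: showing that the extracted matching field is linkage. The paper does not go through the tree characterisation of Lemma~\ref{lem:linkage+and+tree} here; it verifies the exchange form of the axiom (Lemma~\ref{lem:subset+linkage}) directly. Since a maximal tope is a disjoint union of stars centred at the right nodes, any $\lno_{j'}\notin\sigma$ lies in the star of some $\rno_i$ of a tope $T$ containing $M_\sigma$; swapping $\lno_{j'}$ for the element of $\sigma$ matched to $\rno_i$ gives a maximal matching of $T$ on $\sigma\setminus\{\lno_j\}\cup\{\lno_{j'}\}$ agreeing with $M_\sigma$ away from $\rno_i$, which by compatibility must be the matching of the extracted field on that set. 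No induction is needed.

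Your replacement argument has a genuine gap. The induction step ``delete a left node and pass to the induced sub-arrangement'' presupposes that restricting a tope arrangement to $\lnoset\setminus\{\lno_j\}$ again yields a tope arrangement, i.e.\ that for each $w\in\simplat{n-1-d}{d}$ exactly one of the $d$ topes with right degree vector $w+\unit{[d]}+\unit{i}$ has $\lno_j$ adjacent to $\rno_i$, so that the restrictions hit each lattice point of the smaller simplex. Existence of such an $i$ does follow from Lemma~\ref{lem:neighbour-topes}, but uniqueness does not: already for $(n,d)=(3,2)$ the compatible pair of topes with stars $\rno_1$--$\{\lno_j,\lno_a\}$, $\rno_2$--$\{\lno_b\}$ and $\rno_1$--$\{\lno_a\}$, $\rno_2$--$\{\lno_j,\lno_b\}$ has $\lno_j$ adjacent to $\rno_1$ in the first and to $\rno_2$ in the second. (The two restrictions happen to coincide as graphs, so the sub-arrangement may still be well defined as a set, but establishing that in general is a separate sector-type argument of essentially the same difficulty as Theorem~\ref{thm:lattice-points-Chow}.) The base case $n=d+1$ is likewise only asserted: acyclicity of the union of the $d\times d$ matchings does not follow ``directly'' from Lemma~\ref{lem:neighbour-topes}, since a cycle in that union need not decompose into two matchings each contained in a single tope. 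As written, the linkage step is therefore not closed; the paper's single-tope swap via Lemma~\ref{lem:subset+linkage} is the missing ingredient.
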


\begin{proof}
The first statement is shown in Theorem \ref{thm:constructed+topes}, remains to show the second statement.
  As tope arrangements satisfy the conditions of Lemma \ref{lem:neighbour-topes} and are in bijection with $\simplat{n-d}{d}$, we can construct the graphs $\Omega_{\rho}$ for all $\rho \in \binom{[n]}{n-d+1}$ via Proposition \ref{prop:construct-Chow}.
  The Chow covector with left degree vector $\rho$ gives rise to the matchings on $[n]\setminus \rho \cup \{j\}$ for all $j \in \rho$.
  As the topes are compatible, each $\rho$ occurs exactly once. 
  
These graphs have the same properties as Chow covectors, in particular that they yield the existence of a perfect matching for every $d$-subset $\sigma \subset \lnoset$ contained in some tope in the tope arrangement in the same way as the construction at the end of the proof of Proposition~\ref{prop:construct-Chow}.
Note that these matchings are unique as the topes are compatible, therefore the tope arrangement induces a matching field.
It remains to show that the matching field is linkage.

Let $\sigma,\sigma' \subset \lnoset$ be distinct $d$-subsets and $\lno_{j'} \in \sigma' \setminus \sigma$.
Consider a tope $T$ that contains the matching on $\sigma$ and consider the node $\rno_i$ adjacent to $\lno_{j'}$.
There exists some node $\lno_j \in \sigma$ adjacent to $\rno_i$ in the matching on $\sigma$, therefore the matching on $\sigma \setminus \{\lno_{j}\} \cup \{\lno_{j'}\}$ agrees with the matching on $\sigma$ outside of $\rno_i$.
This is equivalent to the linkage axiom by Lemma \ref{lem:subset+linkage}.
\end{proof}

\subsection{Comparison with trianguloids}
Trianguloids are a combinatorial object introduced recently in \cite{GalashinNenashevPostnikov2018} to study triangulations of products of simplices.
Tope arrangements have similar structural properties to trianguloids.
To effectively demonstrate this, we introduce \emph{extended $(n,d)$-tope arrangements}, a set of compatible topes in bijection with $\simplat{n}{d}$ via the map sending a tope to its right degree vector.
Note that this requires us to relax our definition of tope to allow for isolated right nodes.
Observe that we can recover a tope arrangement from an extended tope arrangement by removing any topes with isolated nodes.

To allow for more direct comparison, we introduce the trianguloid axioms in the language of bipartite graphs.
\begin{definition}
Let $\cG = \SetOf{G_a}{a \in \simplat{n}{d}}$ be a collection of bipartite graphs on $\lnoset \sqcup \rnoset$ in bijection with the lattice points $\simplat{n}{d}$.
Let $\cN_a(v)$ denote the neighbourhood of $v$ in $G_a$.
The collection $\cG$ is a \emph{pre-trianguloid} if it satisfies the following axioms:
\begin{enumerate}[label=(T\arabic*)]
\item the graph $G_a$ has right degree vector $a$, \label{ax:rdv}
\item each graph has no isolated left nodes, \label{ax:ldv}
\item for $a, a' \in \simplat{n}{d}$ where $a' = a + \unit{p} - \unit{q}$, we have $\cN_a(\rno_p) \subset \cN_{a'}(\rno_p)$. \label{ax:sector}
\end{enumerate}
$\cG$ is a \emph{trianguloid} if it is a pre-trianguloid that satisfies the following \emph{hexagon axiom}:
\begin{enumerate}[resume,label=(T\arabic*)]
\item \label{ax:hexagon} Let $c \in \simplat{n-2}{d}$ and $i,j,k \in [d]$ be distinct such that $\cN_{c+\unit{i}+\unit{j}}(\rno_j) \neq \cN_{c+\unit{j}+\unit{k}}(\rno_j)$.
Then we have 
\[
\cN_{c+\unit{i}+\unit{j}}(\rno_i) = \cN_{c+\unit{i}+\unit{k}}(\rno_i) \ , \ \cN_{c+\unit{i}+\unit{k}}(\rno_k) = \cN_{c+\unit{j}+\unit{k}}(\rno_k) \enspace .
\]
\end{enumerate}
\end{definition}
\begin{figure}
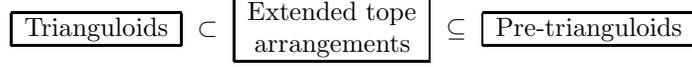

\[
\begin{tabular}{|c|}
\hline
Trianguloids \\
\hline
\end{tabular}
\ \subset \
\begin{tabular}{|c|}
\hline
Extended tope \\
arrangements \\
\hline
\end{tabular}
\ \subseteq \
\begin{tabular}{|c|}
\hline
Pre-trianguloids \\
\hline
\end{tabular}
\]
\caption{The inclusion relationship between trianguloids, pre-trianguloids and extended tope arrangements.}
\label{fig:trianguloid+relationship}
\end{figure}
Extended tope arrangements and (pre-)trianguloids are structurally similar objects, as the following proposition and Figure~\ref{fig:trianguloid+relationship} demonstrate.

\begin{proposition}
Every extended tope arrangement gives rise to a pre-trianguloid.
Trianguloids are a strict subclass of extended tope arrangements. 
\end{proposition}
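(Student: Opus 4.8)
To prove the proposition I would establish the two claims separately. For the first claim, that every extended tope arrangement gives rise to a pre-trianguloid, I would take an extended tope arrangement $\cG = \SetOf{G_a}{a \in \simplat{n}{d}}$ and verify axioms \ref{ax:rdv}--\ref{ax:sector} directly. Axiom \ref{ax:rdv} is immediate from the definition of an extended tope arrangement, since the bijection sends $G_a$ to its right degree vector $a$. Axiom \ref{ax:ldv} holds because topes (even in the relaxed sense allowing isolated right nodes) have left degree vector $\unit{\sigma}$ for some $\sigma$, so every left node has degree $0$ or $1$ and in particular no left node has degree exceeding $1$; but \ref{ax:ldv} asks for \emph{no isolated left nodes}, which is the statement that the left degree vector is $\unit{[n]}$---this is exactly what it means for a tope of thickness $n$ to be \emph{maximal}, so I would note that the topes in an (extended) tope arrangement are maximal and hence have full support on $\lnoset$. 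The real content is axiom \ref{ax:sector}: for $a' = a + \unit{p} - \unit{q}$ we need $\cN_a(\rno_p) \subseteq \cN_{a'}(\rno_p)$. This is precisely the conclusion of Lemma~\ref{lem:neighbour-topes}, applied to the two compatible topes $G_a$ and $G_{a'}$ with right degree vectors $v = a$ and $v + \unit{p} - \unit{q} = a'$: that lemma states the edges incident with $\rno_q$ in the tope of degree $a'$ are contained in the tope of degree $a$, which after swapping the roles of $p$ and $q$ gives the required containment of neighbourhoods at $\rno_p$. Hence compatibility of the topes, guaranteed by the tope arrangement definition, yields \ref{ax:sector}.

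For the second claim I must show both that trianguloids form a subclass of extended tope arrangements and that the inclusion is strict. For the inclusion, I would start from a trianguloid $\cG$ and show its graphs are compatible maximal topes. Axioms \ref{ax:rdv} and \ref{ax:ldv} already tell me each $G_a$ is a graph with right degree vector $a$ and no isolated left nodes; since the total right degree equals $n$ and there are $n$ left nodes each with degree at least $1$, a counting argument forces every left node to have degree exactly $1$, so each $G_a$ is genuinely a (maximal) tope. The substantive step is to prove that the topes in a trianguloid are pairwise \emph{compatible}. Here I would invoke the hexagon axiom \ref{ax:hexagon}: the sector axiom \ref{ax:sector} controls adjacent lattice points, and the hexagon axiom is exactly the local compatibility condition that propagates through triples of lattice points differing by the moves $\unit{i},\unit{j},\unit{k}$, ruling out the alternating cycles that certify incompatibility (cf.\ Lemma~\ref{lem:unique-rdv-topes}). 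I would argue that any incompatibility between two topes $G_a$ and $G_{a'}$ is witnessed by a shortest alternating cycle, and then localise it: by walking along a lattice path from $a$ to $a'$ and applying \ref{ax:sector} and \ref{ax:hexagon} at each elementary hexagonal step, one shows no such cycle can survive, so the topes are globally compatible.

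**Strictness.**
For strictness of the inclusion, I would exhibit an extended tope arrangement that fails the hexagon axiom \ref{ax:hexagon}. The natural source is Example~\ref{ex:incompatible+linkage+covectors}: the $(6,4)$-tope arrangement in Figure~\ref{fig:non+polyhedral+tope+arrangement} consists of compatible maximal topes in bijection with $\simplat{n-d}{d}$, hence (after allowing boundary topes with isolated right nodes to fill out the bijection with $\simplat{n}{d}$) extends to an extended tope arrangement, yet it does not arise from any triangulation of $\Dprod{5}{3}$. Since trianguloids are in bijection with triangulations by the main result of \cite{GalashinNenashevPostnikov2018}, this arrangement cannot be a trianguloid, giving the strict inclusion.

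**Main obstacle.**
The step I expect to be the genuine difficulty is proving global compatibility of the topes of a trianguloid from the local hexagon axiom \ref{ax:hexagon}. The sector axiom only relates neighbourhoods at \emph{adjacent} lattice points, so compatibility between two far-apart topes $G_a, G_{a'}$ is not immediate; I must show that a putative alternating cycle certifying incompatibility can always be pushed into a single hexagonal face of the lattice, where \ref{ax:hexagon} forbids it. Setting up the right induction on the lattice distance $\lVert a - a'\rVert$, and correctly handling the bookkeeping of which neighbourhoods change along a minimal lattice path, is the technical heart of the argument.
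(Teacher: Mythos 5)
Your treatment of the first claim matches the paper's: axioms (T1) and (T2) follow from degree counting, and (T3) is exactly Lemma~\ref{lem:neighbour-topes} applied with the roles of $p$ and $q$ swapped (the paper notes, as you should, that the lemma extends to topes with isolated right nodes since these impose no extra restrictions). That part is fine.

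The gap is in your argument that every trianguloid is an extended tope arrangement, i.e.\ that the hexagon axiom forces global pairwise compatibility of the graphs. Your sketch --- take a shortest alternating cycle witnessing incompatibility, walk a lattice path from $a$ to $a'$, and ``push'' the cycle into a single hexagon --- is not a proof; no induction is actually set up, and it is not at all clear how an alternating cycle between two distant topes localises to a single triple $c+\unit{i}+\unit{j},\ c+\unit{i}+\unit{k},\ c+\unit{j}+\unit{k}$. This local-to-global step is essentially the main theorem of \cite{GalashinNenashevPostnikov2018} (trianguloids are equivalent to triangulations), and it cannot be dispatched in a paragraph. The paper does not attempt it: its proof establishes only the pre-trianguloid containment and the \emph{strictness}, leaving the inclusion of trianguloids in extended tope arrangements to the cited equivalence with triangulations, whose trees (and hence topes) are compatible by definition. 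If you want the inclusion, cite that result rather than re-deriving it. Your strictness argument also takes a costlier detour: the $(6,4)$-tope arrangement of Figure~\ref{fig:non+polyhedral+tope+arrangement} is indexed by $\simplat{2}{4}$, not $\simplat{6}{4}$, so it is not itself an extended tope arrangement --- you would have to construct and verify all the boundary topes, which you wave away --- and you then need the full trianguloid--triangulation bijection to conclude it is not a trianguloid. The paper instead exhibits the small extended $(2,4)$-tope arrangement of Figure~\ref{fig:extended+tope} and checks directly that the hexagon axiom fails at $c=(0,0,0,0)$, which requires no external input.
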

\begin{proof}
By comparing degrees, axioms~\ref{ax:rdv} and~\ref{ax:ldv} ensure that each graph is a tope, albeit with possibly isolated right nodes.
Axiom~\ref{ax:sector} comprises the combinatorial sector condition which we exhibit for general linkage matching fields in Lemma~\ref{lem:neighbour-topes}.
This lemma directly generalises to extended $(n,d)$-tope arrangements, as isolated right nodes add no additional restrictions.
As the graphs of a (pre-)trianguloid demand no compatibility assumptions, we get the immediate relation that every extended tope arrangement is a pre-trianguloid.

Extended tope arrangements are not required to satisfy the hexagon axiom~\ref{ax:hexagon} as Example~\ref{ex:hexagon+counterexample} and Figure~\ref{fig:extended+tope} demonstrate. 
As a result, extended tope arrangements are a strictly more general class of objects than trianguloids.
\end{proof}

We note that it may be the case that pairwise compatibility is automatically satisfied by pre-trianguloids, showing that global compatibility and the local combinatorial sector conditions are in fact equivalent.
\begin{question}
Are pre-trianguloids and extended tope arrangements cryptomorphic objects?
\end{question}

\begin{figure}
\includegraphics{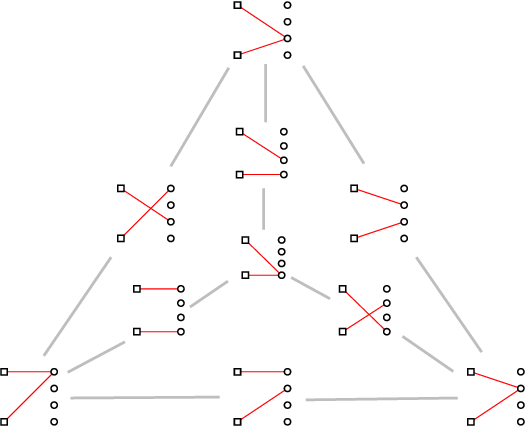}
\caption{An extended $(2,4)$-tope arrangement that does not satisfy the hexagon axiom.}
\label{fig:extended+tope}
\end{figure}
\begin{example} \label{ex:hexagon+counterexample}
Consider the extended $(2,4)$-tope arrangement shown in Figure \ref{fig:extended+tope}.
The topes with right degree vectors $(1,1,0,0), (1,0,1,0), (0,1,1,0)$ all have different neighbourhoods to $\rno_1,\rno_2,\rno_3$.
This violates the hexagon axiom in the case where $c = (0,0,0,0)$ and $i,j,k = \{1,2,3\}$, and is therefore not a trianguloid.

We can see the polyhedral intuition for this via matching field completion, discussed further in Section \ref{sec:matching+stacks+transversal+matroids}.
Observe that by adding four `dummy nodes' $\lno_1,\dots,\lno_4$ to $\lnoset$ and adding the matching consisting of edges $\SetOf{(\lno_i,\rno_i)}{i \in [4]}$ to each tope, we obtain the $(6,4)$-tope arrangement shown in Figure \ref{fig:non+polyhedral+tope+arrangement}.
Recall that this tope arrangement is derived from a linkage matching field cannot be extracted from a triangulation of $\Dprod{5}{3}$.
As trianguloids encode triangulations of products of simplices, it should be unsurprising that this extended $(2,4)$-tope arrangement does not satisfy all the axioms of trianguloids.
\end{example}

The question of realisability of linkage matching fields is addressed further in Section \ref{sec:matching+stacks+transversal+matroids}.
In particular, better understanding the relationships between these objects and of the hexagon axiom may shed light on the problem of extendibility addressed there.

\subsection{Matching field polytopes and the flip graph} \label{sec:flip+graph}

The notion of a \emph{matching field polytope} first occurs in \cite{MohammadiShaw:2018}.
It is the convex hull of the characteristic vectors of the matchings of an $(n,d)$-matching field in $\RR^{n \times d}$.

This is a natural analogue of the matroid polytope, as in some sense matching fields play the role of a matroid for tropical linear algebra.
However, unlike matroid polytopes, their vertex-edge graph is not the flip graph of the matchings as we demonstrate in Example~\ref{ex:flip+graph}.

Here, the nodes of the \emph{flip graph} are the matchings and two matchings are adjacent if and only if they differ in one edge.

\begin{example} \label{ex:flip+graph}
We consider a $(5,3)$-linkage matching field as shown in Figure~\ref{fig:flip+graph}.
The matchings are encoded by words of length three where the matching contains $(\lno_j,\rno_i)$ if $j$ is in the $i$th position.
Two matchings differ by a flip if the words differ in precisely one position.
We note that the flip graph can be decomposed into the linkage trees for each $4$-subset of the set $[5]$, each one represented by a different colour.
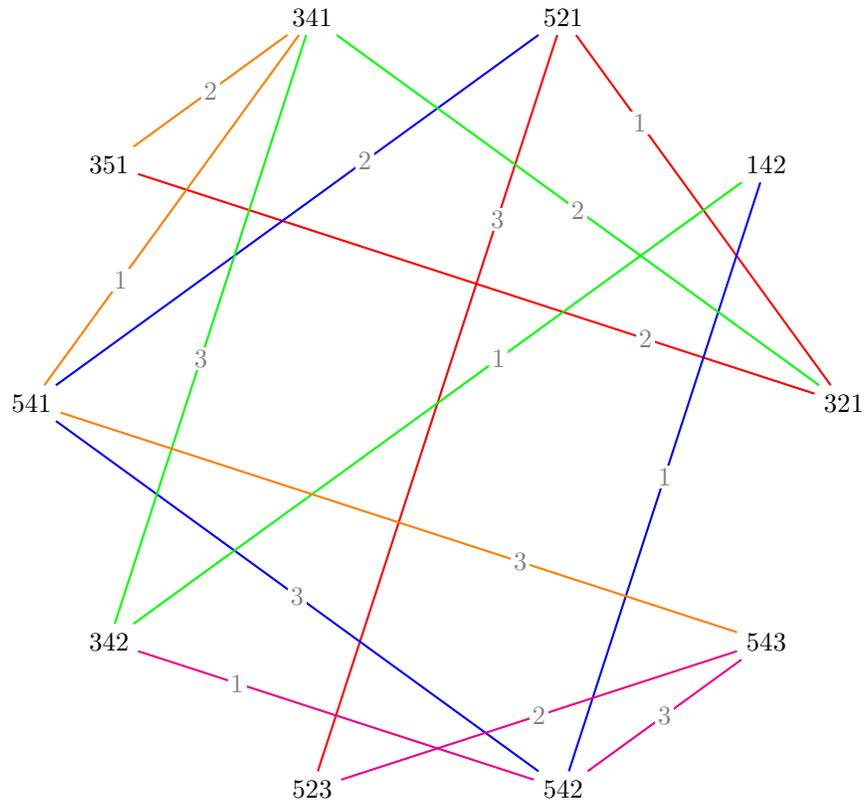
\begin{figure}%
\newcount\mycount
\begin{tikzpicture}
\foreach \x in {1,...,10}{%
    \pgfmathparse{(\x-1)*36}
    \coordinate (v\x) at (\pgfmathresult:5.4cm) {};
		}
\node (1) at (v1){321};
\node (2) at (v2){142};
\node (3) at (v3){521};
\node (4) at (v4){341};
\node (5) at (v5){351};
\node (6) at (v6){541};
\node (7) at (v7){342};
\node (8) at (v8){523};
\node (9) at (v9){542};
\node (10) at (v10){543};

\draw[EdgeStyle][red] (1) to node[LabelStyle, near end]{$1$} (3);
\draw[EdgeStyle][red] (1) to node[LabelStyle, near start]{$2$} (5);
\draw[EdgeStyle][red] (3) to node[LabelStyle, near start]{$3$} (8);
\draw[EdgeStyle][blue] (2) to node[LabelStyle]{$1$} (9);
\draw[EdgeStyle][blue] (3) to node[LabelStyle, pos=0.36]{$2$} (6);
\draw[EdgeStyle][blue] (6) to node[LabelStyle]{$3$} (9);
\draw[EdgeStyle][green] (2) to node[LabelStyle, pos=0.4]{$1$} (7);
\draw[EdgeStyle][green] (1) to node[LabelStyle]{$2$} (4);
\draw[EdgeStyle][green] (4) to node[LabelStyle, pos=0.55]{$3$} (7);
\draw[EdgeStyle][orange] (4) to node[LabelStyle, pos=0.70]{$1$} (6);
\draw[EdgeStyle][orange] (4) to node[LabelStyle]{$2$} (5);
\draw[EdgeStyle][orange] (6) to node[LabelStyle, pos=0.68]{$3$} (10);
\draw[EdgeStyle][magenta] (7) to node[LabelStyle, near start]{$1$} (9);
\draw[EdgeStyle][magenta] (8) to node[LabelStyle]{$2$} (10);
\draw[EdgeStyle][magenta] (9) to node[LabelStyle]{$3$} (10);
\end{tikzpicture}
\caption{The flip graph of a $(5,3)$-matching field. Each coloured subgraph is the embedding of the linkage tree of the $4$-subset. Each edge is labelled by the deviating position.}%
\label{fig:flip+graph}%
\end{figure}

The convex hull of the characteristic vectors of these matchings in $\RR^{5\times 3}$ is the matching field polytope of the $(5,3)$-matching field.
It has $35$ edges and $f$-vector $(10,35,61,59,32,9)$.
The adjacencies of its vertex edge graph are
\[
\begin{array}{@{}*{15}{c}@{}}
341: & 351 & 541 & 342 & 543 & 321 & 142 \\
351: & 341 & 541 & 342 & 523 & 542 & 543 & 321 & 142 & 521 \\
541: & 341 & 351 & 542 & 543 & 142 & 521 \\
342: & 341 & 351 & 523 & 542 & 543 & 321 & 142 \\
523: & 351 & 342 & 542 & 543 & 321 & 142 & 521 \\
542: & 351 & 541 & 342 & 523 & 543 & 142 & 521 \\
543: & 341 & 351 & 541 & 342 & 523 & 542 & 142 \\
321: & 341 & 351 & 342 & 523 & 142 & 521 \\
142: & 341 & 351 & 541 & 342 & 523 & 542 & 543 & 321 & 521 \\
521: & 351 & 541 & 523 & 542 & 321 & 142 
\end{array} \enspace .
\]
This was computed with \texttt{polymake} \cite{DMV:polymake}.
In particular, the graph in Figure~\ref{fig:flip+graph} is a proper subgraph of the vertex-edge graph.
\end{example}

For a linkage $(n,d)$-matching field $\mathcal{M}$, the flip graph has some nice properties.

\begin{lemma}
The flip graph of an $(n,d)$-linkage matching field has $\binom{n}{d+1} \cdot d$ edges.
\end{lemma}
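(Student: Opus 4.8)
The plan is to exhibit an explicit bijection between the edges of the flip graph and the set of pairs $(\tau,\rno_i)$ with $\tau \in \binom{[n]}{d+1}$ and $\rno_i \in \rnoset$; since there are exactly $\binom{n}{d+1}\cdot d$ such pairs, the count follows at once.

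First I would set up the forward map. Two adjacent matchings $M_\sigma, M_{\sigma'}$ share $d-1$ edges and differ in a single edge, which necessarily sits over one right node $\rno_i$: say $(\lno_j,\rno_i) \in M_\sigma$ and $(\lno_{j'},\rno_i) \in M_{\sigma'}$ with $j \neq j'$, while the two matchings agree on the remaining $d-1$ right nodes. The $d-1$ common edges force $\sigma$ and $\sigma'$ to share those $d-1$ left nodes, so $\tau := \sigma \cup \sigma' \in \binom{[n]}{d+1}$ and $\sigma = \tau\setminus\{\lno_{j'}\}$, $\sigma' = \tau\setminus\{\lno_j\}$. (Here $\lno_{j'}\notin\sigma$, since otherwise $\lno_{j'}$ would be matched to some $\rno_k$ with $k\neq i$ in $M_\sigma$, hence also in $M_{\sigma'}$ by agreement, contradicting $(\lno_{j'},\rno_i)\in M_{\sigma'}$.) This assigns to each flip edge a well-defined pair $(\tau,\rno_i)$.

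Next I would show that every pair $(\tau,\rno_i)$ has exactly one flip edge in its fibre. By Lemma~\ref{lem:linkage+and+tree}, the linkage covector $C_\tau = \bigcup_{\lno \in \tau} M_{\tau\setminus\{\lno\}}$ is a tree in which $\rno_i$ has degree $2$; let $\lno_p,\lno_q$ be its two neighbours. Since every $M_{\tau\setminus\{\lno\}}$ is contained in $C_\tau$, it must match $\rno_i$ to $\lno_p$ or $\lno_q$. A flip edge over $(\tau,\rno_i)$ consists of two such matchings that agree on all right nodes but $\rno_i$, so one sends $\rno_i\mapsto\lno_p$ and the other $\rno_i\mapsto\lno_q$; writing $S$ for their common set of $d-1$ left nodes matched to the other right nodes, one checks $\lno_p,\lno_q\notin S$, whence $\tau = S\cup\{\lno_p,\lno_q\}$ and the two left-supports are $S\cup\{\lno_p\}=\tau\setminus\{\lno_q\}$ and $S\cup\{\lno_q\}=\tau\setminus\{\lno_p\}$. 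Thus the only possible flip edge is $\{M_{\tau\setminus\{\lno_p\}}, M_{\tau\setminus\{\lno_q\}}\}$, giving uniqueness, while the linkage axiom (Definition~\ref{def:linkage+axiom}) applied to $\rno_i$ and $\tau$ guarantees that this pair really does agree away from $\rno_i$, giving existence.

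The main obstacle is this uniqueness step: a priori several pairs among the $d+1$ matchings on $\tau$ could differ in a single edge over $\rno_i$. The resolution is precisely the degree-$2$ condition of Lemma~\ref{lem:linkage+and+tree}, which restricts the two possible images of $\rno_i$ to $\{\lno_p,\lno_q\}$ and thereby pins down the omitted left node of each of the two matchings. With the bijection established, the flip graph has $\binom{n}{d+1}\cdot d$ edges; equivalently, the $d$ flips over a fixed $\tau$ are exactly the edges of the Sturmfels--Zelevinsky linkage tree on $\tau$ obtained by suppressing the degree-$2$ nodes of $C_\tau$, which recovers the colour-decomposition displayed in Figure~\ref{fig:flip+graph}.
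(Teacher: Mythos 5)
Your proof is correct and follows essentially the same route as the paper: both arguments count $d$ flips for each of the $\binom{n}{d+1}$ linkage covectors, the flip over $(\tau,\rno_i)$ being the unique pair of matchings on $\tau$ whose union is the tope with right degree vector $\unit{[d]}+\unit{i}$ inside $C_\tau$. The only difference is that you justify the ``exactly one flip per pair $(\tau,\rno_i)$'' step directly from the linkage axiom together with the degree-$2$ tree description of Lemma~\ref{lem:linkage+and+tree}, whereas the paper simply cites Theorem~\ref{thm:constructed+topes}.
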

\begin{proof}
  The edges correspond to those topes whose right degree vector is a permutation of the partition $(2^1,1^{d-1})$.
  By Theorem~\ref{thm:constructed+topes}, there are exactly $d$ such topes for each linkage covector, each of which are distinct.
  Since there are $\binom{n}{d+1}$ linkage covectors, the claim follows.
\end{proof}

More generally we obtain a characterisation of the topes in terms of subgraphs of the flip graph. 

\begin{proposition} \label{prop:topes+products+simplices}
  A tope with right degree vector $v$ is the union of the $v_1 \cdots v_d$ matchings on the sets $N_1 \times \dots \times N_d$, where $N_i$ are the nodes adjacent with $r_i$ in the tope.
  
  Conversely, let $U$ be a subset of the matchings such that the induced subgraph of the flip graph on $U$ is the vertex-edge graph of a product of simplices $\Delta_{v_1-1} \times \dots \times \Delta_{v_d-1}$, where $v_i \geq 1$ for all $i \in [d]$.
  Then the union of the matchings in $U$ is a tope with right degree vector $(v_1,\ldots,v_d)$.
\end{proposition}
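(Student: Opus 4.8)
The plan is to treat the two directions separately, with essentially all of the work residing in the converse.

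For the forward direction I would argue directly from the definition of a tope. Since a tope with right degree vector $v$ has left degree vector $\unit{\sigma}$, every left node in its support has degree one, so the neighbourhoods $N_1,\dots,N_d$ of the right nodes are pairwise disjoint with $|N_i| = v_i$. A choice of one node from each $N_i$, matched to the corresponding $\rno_i$, is a perfect matching on a $d$-subset contained in the tope, hence a matching of $\cM$ by Lemma~\ref{lem:enriched-matching}; there are exactly $v_1\cdots v_d$ such choices. The edge set of the tope is $\bigcup_{i}\{(\lno,\rno_i)\mid \lno\in N_i\}$, which is precisely the union of these matchings, giving the first claim. Moreover two such matchings are flip-adjacent exactly when their choices differ in a single $N_i$, so the induced flip-subgraph on them is the $1$-skeleton of $\Delta_{v_1-1}\times\cdots\times\Delta_{v_d-1}$; this already previews the converse.

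For the converse, the central idea is to colour each edge of the flip graph by the unique right node at which its two endpoints disagree (two flip-adjacent matchings share $d-1$ edges, hence differ in the partner of exactly one right node). I would first show this colouring is \emph{clique-monochromatic}: if $M_1,M_2,M_3$ are pairwise flip-adjacent but $M_1M_2$ and $M_1M_3$ carry distinct colours $\rno_i\neq\rno_j$, then $M_2\mathbin{\triangle}M_3$ contains disagreements at both $\rno_i$ and $\rno_j$, so $|M_2\mathbin{\triangle}M_3|\geq 4$ and they cannot be flip-adjacent; propagating this through shared vertices makes every clique monochromatic. Since the maximal cliques of the $1$-skeleton of $\Delta_{v_1-1}\times\cdots\times\Delta_{v_d-1}$ are exactly the \emph{lines} obtained by fixing all but one coordinate (any clique forces its vertices to agree outside one coordinate), each line acquires a well-defined colour, with lines of length $v_i$ coming from direction $i$. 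I would then pin the colouring down globally using the induced $4$-cycles of the product. In a $2\times 2$ square spanned by directions $i,j$, comparing the two paths to the opposite corner forces the two direction-$i$ edges to share a colour and the two direction-$j$ edges to share a different colour: if either path used one colour twice the opposite corner would equal or be flip-adjacent to the start, and matching up the two disagreement-sets together with non-adjacency of the middle corners pins the rest. As any two parallel lines of a fixed direction are joined by a chain of such squares, all lines of direction $i$ share one colour $c(i)$, and the same argument makes $c$ injective on the directions with $v_i\geq 2$. Hence moving within $U$ changes the partner of $\rno_k$ only along edges of colour $\rno_k$, so this partner depends on the single coordinate $i$ with $c(i)=k$ (and is constant when $k$ is not a value of $c$), and along that coordinate the partners are pairwise distinct since consecutive matchings genuinely flip, giving $\rno_{c(i)}$ degree $v_i$ in the union.

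Finally I would verify that the union $G=\bigcup U$ is genuinely a tope. Every coordinate combination occurs in the full product, and at each corresponding vertex the $d$ partners form a matching and so are distinct; this forces the partner-sets of distinct right nodes to be pairwise disjoint, whence every left node of $G$ has degree one. Counting edges then shows $G$ has right degree multiset $\{v_1,\dots,v_d\}$ and left degree vector $\unit{\sigma}$, i.e.\ it is a tope of the asserted type, with the factors identified with the right nodes they flip. I expect the main obstacle to be precisely the globalisation step: upgrading the purely local $4$-cycle computation to the statement that the flip-colouring is constant along each direction and injective across directions, which is what ultimately matches the abstract product structure to the right-node bookkeeping of the union.
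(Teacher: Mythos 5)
Your proof is correct, but it organises the converse along a genuinely different route from the paper. The paper proceeds by induction on the number of non-trivial factors of the product: it splits $U$ into $v_k$ parallel faces, applies the inductive hypothesis to obtain sub-topes $T_1,\dots,T_{v_k}$, and then uses the quadrangle lemma (Lemma~\ref{lem:quadrangle+subgraph}) to show that these sub-topes pairwise differ in a single degree-one node, so their union is again a tope. You instead argue globally: you colour each flip-edge by the right node at which its endpoints disagree, note that cliques (hence the lines of the product) are monochromatic, and use induced squares --- which is precisely the content of Lemma~\ref{lem:quadrangle+subgraph} --- together with chains of squares joining parallel lines to show that the colouring is constant on each direction and injective across directions; the tope structure of $\bigcup U$ then follows from the resulting description that the partner of $\rno_{c(i)}$ depends only on the $i$th coordinate. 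Your route avoids the induction and makes the correspondence between factors and right nodes explicit, at the price of the globalisation step you yourself flag; the paper's induction is shorter on the page but leaves that identification implicit. Two small points. First, in the forward direction the fact that the $v_1\cdots v_d$ perfect matchings contained in the tope are matchings of $\cM$ should be drawn from the compatibility and uniqueness statement of Theorem~\ref{thm:constructed+topes} applied to the restriction of the matching field to the support of the tope (as the paper does), rather than from Lemma~\ref{lem:enriched-matching}, which concerns topes inside linkage covectors; since each left node of the tope has degree one, the tope contains a unique perfect matching on each transversal $d$-subset, and compatibility forces it to equal the matching of $\cM$. Second, when some $v_i=1$ your colouring $c$ is defined only on the non-trivial directions, so the right nodes outside its image receive constant partners and degree one; you note this parenthetically, and it is exactly what is needed to land on the degree vector $(v_1,\dots,v_d)$ up to the identification of factors with right nodes.
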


\begin{lemma} \label{lem:quadrangle+subgraph}
Let $\mu_{1}^{(1)},\mu_{1}^{(2)},\mu_{2}^{(1)},\mu_{2}^{(2)}$ be matchings such that the induced subgraph on their corresponding vertices in the flip graph is a quadrangle. Then there exist two distinct nodes $\rno_p,\rno_q \in R$ such that $\mu_{m}^{(1)}, \mu_{m}^{(2)}$ agree outside of $\rno_p$ and $\mu_{1}^{(m)}, \mu_{2}^{(m)}$ agree outside of $\rno_q$ for $m = 1,2$.
\end{lemma}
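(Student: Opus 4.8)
The plan is to assign to each edge of the flip graph a \emph{label} in $\rnoset$, namely the unique right node at which the two flip-adjacent matchings differ. This is well defined: if two matchings are adjacent they differ in exactly one edge, and since both are perfect matchings covering all of $\rnoset$, their $d-1$ common edges cover the same $d-1$ right nodes, forcing the two differing edges to be incident to the single remaining right node. The whole lemma then reduces to understanding how the four labels of the four edges of the quadrangle are distributed around the cycle.

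First I would set up the $4$-cycle. I may index the four given matchings as $\mu_m^{(k)}$ so that the quadrangle is the cycle
\[
\mu_1^{(1)} - \mu_1^{(2)} - \mu_2^{(2)} - \mu_2^{(1)} - \mu_1^{(1)} \enspace,
\]
so that its four edges are exactly the pairs $(\mu_m^{(1)},\mu_m^{(2)})$ and $(\mu_1^{(m)},\mu_2^{(m)})$ appearing in the statement, while the two diagonals $\{\mu_1^{(1)},\mu_2^{(2)}\}$ and $\{\mu_1^{(2)},\mu_2^{(1)}\}$ are non-edges. The goal becomes showing that the two ``superscript'' edges carry a common label $\rno_p$, the two ``subscript'' edges carry a common label $\rno_q$, and $\rno_p\neq\rno_q$.

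The key device is to track, for each right node $\rno_i$, the left node matched to it as one walks around the cycle; this value is unchanged across an edge unless that edge is labelled $i$, and it must return to its starting value after a full loop. From this I extract three facts. If some label $i$ occurs on exactly one of the four edges, the value at $\rno_i$ changes once and never returns, a contradiction; hence every occurring label occurs at least twice. If a single label occurs on all four edges, then all four matchings agree outside $\rno_i$ and differ there pairwise, so the induced subgraph is $K_4$ rather than a quadrangle, again a contradiction. The only remaining possibility is that exactly two distinct labels $\rno_p\neq\rno_q$ occur, each on exactly two edges.

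It then remains to locate the two $\rno_p$-edges within the cycle, and this is the step I expect to be the main obstacle. I would rule out that the two equally labelled edges are adjacent: if the two $\rno_p$-edges shared a vertex, the two $\rno_q$-edges would share the opposite vertex, and tracking values as above would force the two matchings on the remaining diagonal to agree at $\rno_p$ (the $\rno_q$-edges fix the $p$-coordinate), at $\rno_q$ (the $\rno_p$-edges fix the $q$-coordinate), and trivially at every other right node, hence to be equal --- contradicting that they are distinct vertices of the cycle. Therefore the equally labelled edges are opposite, so the labels alternate $\rno_p,\rno_q,\rno_p,\rno_q$ around the cycle. Reading this off the indexing above yields that $\mu_m^{(1)},\mu_m^{(2)}$ agree outside $\rno_p$ for $m=1,2$ and $\mu_1^{(m)},\mu_2^{(m)}$ agree outside $\rno_q$ for $m=1,2$, with $\rno_p\neq\rno_q$ from the two-label conclusion, which is exactly the claim.
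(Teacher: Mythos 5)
Your proof is correct. It starts from the same foundation as the paper's proof --- label each edge of the induced $4$-cycle by the unique right node at which its two endpoints differ, then show the labels alternate --- but the case analysis is organized differently. The paper works directly with the diagonal pair $\mu_1^{(1)},\mu_2^{(2)}$: the two paths of the cycle joining them show that they agree outside each of two $2$-element label sets, and since the diagonal is a non-edge these distinct matchings must differ in at least two right nodes, forcing the two label sets to coincide; combined with the observation that adjacent edges carry distinct labels (otherwise the endpoints of the induced path of length two would be flip-adjacent, creating a triangle inside the induced quadrangle), this yields at once that opposite edges share a label. Your route instead pins down the multiset of labels: the conservation argument around the cycle shows every occurring label appears at least twice, the $K_4$ observation excludes a single fourfold label, and the block arrangement $\rno_p,\rno_p,\rno_q,\rno_q$ is excluded because the two mixed vertices would then agree at every right node and hence be equal matchings. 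Both arguments use the induced-quadrangle hypothesis twice, just in different places (missing diagonals versus four distinct vertices for you; missing diagonal versus triangle-freeness for the paper); yours is a little longer, but the multiplicity count cleanly separates the possible label distributions, while the paper's two-path comparison is the quicker way to the specific $4$-cycle statement. There is no gap in your argument.
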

\begin{figure}%
\begin{tikzpicture}[scale=0.6]
\node[VertexStyle] (11) at (0,0){$\mu_1^{(1)}$};
\node[VertexStyle] (12) at (6,0){$\mu_1^{(2)}$};
\node[VertexStyle] (21) at (0,4){$\mu_2^{(1)}$};
\node[VertexStyle] (22) at (6,4){$\mu_2^{(2)}$};
\draw (11) -- (3,0) node[below] {$p$} -- (12);
\draw (11) -- (0,2) node[left] {$q$} -- (21);
\draw (21) -- (3,4) node[above] {$s$} -- (22);
\draw (12) -- (6,2) node[right] {$t$} -- (22);
\end{tikzpicture}
\caption{Quadrangle of matchings from Lemma~\ref{lem:quadrangle+subgraph}.}%
\label{fig:quad+matchings}%
\end{figure}
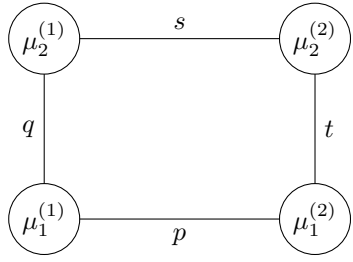
\begin{proof}
	By definition of the flip graph, each pair of adjacent matchings agree outside of a single node.
	Let Figure \ref{fig:quad+matchings} be the induced subgraph of the matchings where the edge labels denote which node they differ in. 
	We deduce that $\mu_{1}^{(1)}, \mu_{2}^{(2)}$ must agree outside of two nodes, specifically $\{\rno_p,\rno_q\}$ and $\{\rno_s,\rno_t\}$, therefore these two sets must be equal.
	Observe that any two adjacent edges in the flip graph must correspond to different right nodes, else they form a $3$-cycle between their vertices, contradicting the quadrangle as our induced subgraph.
	Therefore $\rno_p = \rno_s$ and $\rno_q = \rno_t$.
	
\end{proof}

\begin{proof}[Proof of Proposition \ref{prop:topes+products+simplices}]
  Given a tope on $\sigma \subseteq \lnoset$, the restriction of the linkage matching field to $\sigma$ is also linkage.
  The first part follows directly from Theorem~\ref{thm:constructed+topes} by applying it to the linkage matching field restricted to $\sigma$.
  
  We prove the second part by induction.
  If the induced subgraph on $U$ is the vertex-edge graph of a product of simplices with only one non-trivial factor, it is the vertex-edge graph of a simplex and so all the matchings in $U$ can only differ in the edges incident with the same node.
	Hence, their union is a tope.
  
  Assume that the induced subgraph is the vertex-edge graph of a product of simplices where $k \geq 2$ factors are non-trivial.
  Without loss of generality let these be the first $k$ factors.
  Then $U$ decomposes into $v_k$ disjoint sets $U_1, \ldots, U_{v_k}$ corresponding to faces of the product such that the induced subgraph on $U_{i}$ is isomorphic to the graph of $\prod_{j \in [k-1]}\Delta_{v_j-1}$ for all $i \in [v_k]$.
  By induction the union of the matchings in $U_i$ forms a tope $T_i$ whose right degree vector is a permutation of $(v_1,\ldots,v_{k-1},1,\ldots,1)$.
  In particular, there is a $(k-1)$-set $\sigma_i$, such that every matching contained in $T_i$ agrees on $R \setminus \sigma_i$.
  Note that $T_i$ and $U_i$ contain exactly the same matchings as subgraphs and as elements respectively.

  We claim that for all $i,j \in [v_k]$ the topes $T_i,T_j$ differ in a single node of degree one.
	As the induced subgraph of $U_i \cup U_j$ is isomorphic to $(\prod_{j \in [k-1]}\Delta_{v_j-1}) \times \Delta_1$, for any matchings $\mu_i^{(1)},\mu_i^{(2)} \in U_i$ that differ by a flip, there exists $\mu_j^{(1)},\mu_j^{(2)} \in U_j$ such that the induced subgraph on their corresponding vertices in the flip graph is a quadrangle.
	By Lemma \ref{lem:quadrangle+subgraph}, we draw two conclusions: that $\mu_i^{(1)},\mu_j^{(1)}$ and $\mu_i^{(2)},\mu_j^{(2)}$ differ in the same node and that $\mu_i^{(1)},\mu_i^{(2)}$ and $\mu_j^{(1)},\mu_j^{(2)}$ do also.
	Iterating this over all pairs of matchings that differ by a flip, the first statement implies that $T_i,T_j$ differ in one node, while the second implies it must be a node of degree one.
	If this was not the case, a node of degree two would form a $3$-cycle with any pair of matchings in $U_i$ that agree outside of that node, breaking the quadrangle.

	We obtain that the topes $T_i$ all differ in a flip of an edge incident with the same node.
As each $T_i$ has a different neighbour, the union of the $T_i$ gives a tope with right degree vector $(v_1,\ldots,v_d)$.
\end{proof}

The occurrence of all the vertex-edge graphs of products of simplices in the flip graph can be used to define an interesting cell complex.

\begin{definition}[{\cite[Section 9.2.1]{Kozlov:2008}}]
Let $G$ be an arbitrary graph.
The \emph{prodsimplicial flag complex} $PF(G)$ of $G$ is defined as follows: the graph $G$ is taken to be the $1$-skeleton of $PF(G)$, and the higher-dimensional cells are taken to be all those products of simplices whose $1$-dimensional skeleton is contained in the graph $G$.
\end{definition}

The prodsimplicial complex is an object from combinatorial algebraic topology that allow for more flexibility that simplicial complexes but more structure than an arbitrary cell complex.
For example, they generalise both simplicial and cubical complexes.
Furthermore, the prodsimplicial flag complex can be viewed as a direct generalisation of the clique complex of $G$.

Proposition~\ref{prop:topes+products+simplices} and Theorem~\ref{thm:constructed+topes} directly imply the next statement.

\begin{theorem} \label{thm:prodsimplicial+complex+flip+graph}
  The prodsimplicial flag complex of the flip graph of a linkage $(n,d)$-matching field has the same face lattice as the set of topes derived from it.
  In particular, the maximal cells of the prodsimplicial flag complex are in bijection with $\simplat{n-d}{d}$.
\end{theorem}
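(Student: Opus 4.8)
The plan is to exhibit an explicit order-preserving bijection between the cells of the prodsimplicial flag complex $PF(G)$ of the flip graph $G$ and the topes derived from the linkage matching field $\cM$, and then to read off the maximal cells. First I would use both directions of Proposition~\ref{prop:topes+products+simplices} to set up the correspondence. Given a tope $T$ with right degree vector $v = (v_1,\ldots,v_d)$ and with $N_i$ the neighbourhood of $\rno_i$ in $T$, the set of matchings $N_1 \times \cdots \times N_d$ induces in $G$ exactly the vertex-edge graph of $\Delta_{v_1-1} \times \cdots \times \Delta_{v_d-1}$, since two of these matchings are flip-adjacent precisely when they differ in a single coordinate; hence they span a cell of $PF(G)$. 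Conversely, the second part of Proposition~\ref{prop:topes+products+simplices} asserts that any cell, that is any vertex set $U$ inducing a product-of-simplices graph, has union a tope of the corresponding right degree vector. These assignments are mutually inverse, because recovering the neighbourhoods $N_i$ from the union and reforming $N_1 \times \cdots \times N_d$ returns the original vertex set.

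Next I would verify that this bijection respects the face order. A non-empty face of $\Delta_{v_1-1} \times \cdots \times \Delta_{v_d-1}$ is a sub-product $\prod_i \Delta_{|S_i|-1}$ obtained by choosing non-empty subsets $S_i \subseteq N_i$, and under the correspondence this is the cell of the sub-tope $T'$ with neighbourhoods $S_i$, namely the subgraph $T' \subseteq T$. Thus the cell of $T'$ is a face of the cell of $T$ exactly when $T' \subseteq T$ as bipartite graphs, which is precisely the containment order on topes. This identifies the face lattice of $PF(G)$ with the poset of topes ordered by inclusion, which establishes the first assertion.

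Finally, for the ``in particular'' clause I would show that the topes maximal under inclusion are exactly those of thickness $n$. A thickness-$n$ tope has left degree vector $\unit{[n]}$, so no tope can properly contain it, as topes have $0/1$ left degrees; conversely a tope $M_\sigma$ of thickness $k < n$ is a proper face of a larger tope, obtained by picking $\lno \in \lnoset \setminus \sigma$ and an index $i$ with $(\lno,\rno_i)$ an edge of the linkage covector $C_{\sigma \cup \{\lno\}}$, so that the $i$-amalgamation tope $G_{\sigma \cup \{\lno\}}$ of Lemma~\ref{lem:enriched-matching} contains $M_\sigma$ (such an $i$ exists since $\lno$ is non-isolated in the spanning tree $C_{\sigma \cup \{\lno\}}$). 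Theorem~\ref{thm:constructed+topes} then gives a bijection between thickness-$n$ topes and the positive right degree vectors summing to $n$, and subtracting $\unit{[d]}$ identifies these with $\simplat{n-d}{d}$. The step that most needs care—rather than a genuine obstacle—is the order-preservation claim: one must confirm both that faces of a prodsimplicial cell correspond exactly to sub-topes and that no extra flip-graph edges lie inside $N_1 \times \cdots \times N_d$, the latter holding because matchings differing in two coordinates differ in two edges and so are not flip-adjacent.
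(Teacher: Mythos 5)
Your proposal is correct and follows essentially the same route as the paper, whose proof consists of the single remark that Proposition~\ref{prop:topes+products+simplices} and Theorem~\ref{thm:constructed+topes} directly imply the statement; you have simply made explicit the order-preserving bijection between cells and topes and the identification of maximal topes with $\simplat{n-d}{d}$ that the authors leave implicit.
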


\section{Triangulations of $\Dprod{n-1}{d-1}$ and pairs of lattice points}  \label{sec:pairs+of+lattice+points}

We denote the set of triangulations of $\Dprod{n-1}{d-1}$ by $\cT\cS(n,d)$. By \cite[Theorem 6.2.13]{DeLoeraRambauSantos} there are exactly $K = \binom{n+d-2}{n-1}$ full-dimensional simplices in such a triangulation.

Given a set of trees encoding a triangulation of $\Delta_{n-1}\times\Delta_{d-1}$, consider the map that sends a tree $T$ to its left and right degree vector pair $(u,v)$.
By Lemma~\ref{lem:degree-vectors-trees}, this map is injective.
As each left and right degree vector can be identified with a lattice point in $\simplat{d-1}{n}$ and $\simplat{n-1}{d}$ by subtracting $\unit{[n]}$ and $\unit{[d]}$ respectively, this map can be written as

\[
\phi_{n,d} \colon T \mapsto (u - \unit{[n]},v - \unit{[d]}) \in \simplat{d-1}{n} \times \simplat{n-1}{d} \enspace .
\]
This induces the map
\begin{equation} \label{eq:degree-pair-map}
\Phi_{n,d} \colon \cT\cS(n,d) \rightarrow \binom{\simplat{d-1}{n} \times \simplat{n-1}{d}}{K} \enspace ,
\end{equation}
where each tree describing a full-dimensional simplex in the triangulation is mapped to its left and right degree vector pair minus $\unit{[n]}$ and $\unit{[d]}$ respectively.

After \cite[Theorem 12.9]{Postnikov:2009}, Postnikov asked whether the map defined in \eqref{eq:degree-pair-map} is injective.
His question is posed for root polytopes in general.
As discussed earlier, parallel to our work, Galashin, Nenashev and Postnikov derived an affirmative answer to this question in~\cite{GalashinNenashevPostnikov2018}.
Whereas their approach is based on the newly introduced notion of trianguloids, we present an independent proof for the case of the full product of two simplices (but not root polytopes in general) by exploiting the same structure as for Theorem~\ref{thm:lattice-points-Chow}.

A natural approach is to reconstruct the triangulation using the structure of the dual graph.
However, this does not determine the triangulation as we illustrate.

\begin{example}[Triangulation not determined by dual graph]
  Figure~\ref{fig:dual+graph} shows two triangulations of $\Delta_{2}\times\Delta_{2}$ whose dual graphs are the same.
	Furthermore, the trees encoding their triangulations contain all of the same non-maximal matchings.
	However, their unique maximal matchings are not equal, and so the triangulations are not equal, even up to symmetry.
  \begin{figure}%
\centering
\includegraphics[width=0.6\textwidth]{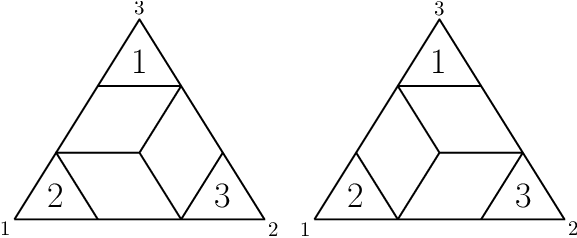}
\quad
\resizebox{0.35\textwidth}{!}{\begin{tikzpicture}[scale=0.6]
	\node[BoxVertex] (1) at (0,2){1};
	\node[BoxVertex] (2) at (0,0){2};
	\node[BoxVertex] (3) at (0,-2){3};
	\node[VertexStyle] (11) at (2.5,2){1};
	\node[VertexStyle] (22) at (2.5,0){2};
	\node[VertexStyle] (33) at (2.5,-2){3};
	
	\draw[EdgeStyle] (1) -- (22);
	\draw[EdgeStyle] (2) -- (33);
	\draw[EdgeStyle] (3) -- (11);
	
	\node[BoxVertex] (1) at (4,2){1};
	\node[BoxVertex] (2) at (4,0){2};
	\node[BoxVertex] (3) at (4,-2){3};
	\node[VertexStyle] (11) at (6.5,2){1};
	\node[VertexStyle] (22) at (6.5,0){2};
	\node[VertexStyle] (33) at (6.5,-2){3};
	
	\draw[EdgeStyle] (1) -- (11);
	\draw[EdgeStyle] (2) -- (22);
	\draw[EdgeStyle] (3) -- (33);
\end{tikzpicture}}
\caption{The mixed subdivisions corresponding to two triangulations of $\Delta_2 \times \Delta_2$. Both have the same dual graph, and the same non-maximal matchings. However their unique maximal matchings are different.}
\label{fig:dual+graph}%
\end{figure}
\end{example}

\begin{figure}%
\includegraphics[width=0.7\textwidth]{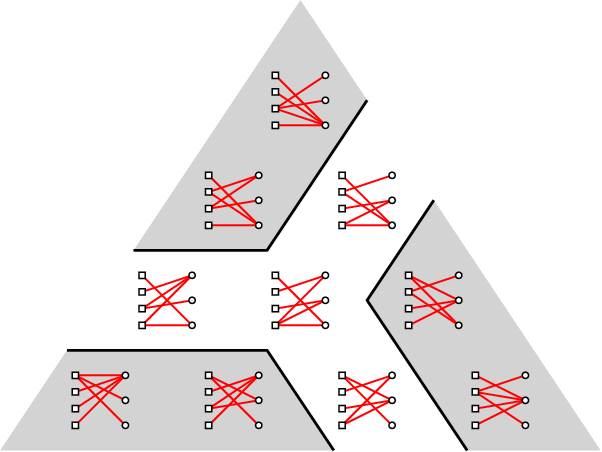}%
\caption{The trees encoding a triangulation of $\Dprod{3}{2}$ arranged by their bijection to $\simplat{4}{2}$.
The grey regions are the (open) combinatorial sectors $S_4^{(1)}, \, S_4^{(2)}, \, S_4^{(3)}$.}%
\label{img+triangulation+sector}%
\end{figure}

Definition~\ref{def:combinatorial+sector} introduces combinatorial sectors for a collection of compatible bipartite graphs $\cG$ with a bijective map to lattice points of a simplex.
The trees encoding a triangulation are all compatible and have a natural bijection to both $\simplat{d-1}{n}$ and $\simplat{n-1}{d}$ determined by their left and right degree vectors.
When $\cG$ is a triangulation $\cT$, combinatorial sectors are a direct analogue to (open) sectors of tropical hyperplane arrangements as described in Example \ref{ex:hyperplane+sector}, see Figure \ref{img+triangulation+sector} for an example.
	We use this notion alongside an iterative method, analogous to the proof of Theorem~\ref{thm:lattice-points-Chow}, to construct the triangulation inductively from the triangulations of its faces.
	
\begin{theorem} \label{thm:phi+injective}
  The map $\Phi_{n,d}$ is injective.
\end{theorem}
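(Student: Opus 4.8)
The plan is to mirror the inductive strategy of Theorem~\ref{thm:lattice-points-Chow}, with the trees encoding the triangulation playing the role of the Chow covectors and the sector lemmas for compatible trees, Lemma~\ref{lem:abstract-containment-sector} and Corollary~\ref{cor:neighbour-trees}, playing the role of Lemma~\ref{lem:neighbour-Chow}. Since each tree of a triangulation is indexed by its right degree vector --- these give a bijection onto $\simplat{n-1}{d}$ by Lemma~\ref{lem:degree-vectors-trees}, as the $K=\binom{n+d-2}{n-1}$ trees match the lattice points in number --- injectivity of $\Phi_{n,d}$ amounts to recovering the edge set of every tree, hence the triangulation via Proposition~\ref{prop:char-triang}, from the collection $\mathcal{U}=\Phi_{n,d}(\cT)$ of degree vector pairs. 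A bipartite graph is not determined by its degree sequence alone, so this is genuine content. Using the symmetry $\Delta_{n-1}\times\Delta_{d-1}\cong\Delta_{d-1}\times\Delta_{n-1}$ I may assume $n\geq d$ and induct on $n$, the case $n=d$ (or $d=1$) being immediate.

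For the inductive step, fix $j\in[n]$ and let $\mathcal{L}_j\subseteq\mathcal{U}$ be the pairs $(u,v)$ with $u_j=1$, i.e. those whose tree has $\lno_j$ as a leaf. I partition $\mathcal{L}_j$ into the images $\mathcal{L}_j^{(i)}$ of the combinatorial sectors $\cS_j^{(i)}$ of Definition~\ref{def:combinatorial+sector}, according to the right node $\rno_i$ to which the leaf $\lno_j$ attaches. The heart of the argument is that this partition can be read off from $\mathcal{U}$ alone, by the exact analogue of Algorithm~\ref{algo:derive+Chow+sectors}: at the extreme lattice point $(n-1)\unit{i}$ the unique tree has $\rno_i$ of degree $n$ adjacent to every left node, so $(u,v)\in\mathcal{L}_j^{(i)}$ there is visible from $u_j=1$; one then sweeps downward in the $\rno_i$-coordinate, accepting $(u,v)$ precisely when some already-accepted pair sits at $v+\unit{i}-\unit{k}$ with $v_k\geq 2$. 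Soundness uses the leaf-agreement clause of Corollary~\ref{cor:neighbour-trees}: the trees at $v$ and $v+\unit{i}-\unit{k}$ are compatible, $\lno_j$ is a leaf in both, and the latter attaches $\lno_j$ to $\rno_i$ by the inductive hypothesis, so they agree on the edge at $\lno_j$. Completeness uses the monotonicity of Lemma~\ref{lem:abstract-containment-sector}: if $\lno_j$ is a leaf on $\rno_i$ at $v$ then, choosing any $k\neq i$ with $v_k\geq 2$ (one exists whenever $v_i<n$, since $\sum_t v_t=n+d-1$), the edge $(\lno_j,\rno_i)$ persists and $\lno_j$ remains a leaf at $v+\unit{i}-\unit{k}$, furnishing an accepted ancestor one step closer to the extreme point.

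Next I establish the tree analogue of Lemma~\ref{lem:Chow-sub-matching-field}: deleting the leaf $\lno_j$ from each tree in $\cS_j^{(i)}$ yields exactly the trees of the triangulation that $\cT$ induces on the facet $\conv\{\unit{t}\mid t\in[n]\setminus\{j\}\}\times\Delta_{d-1}\cong\Delta_{n-2}\times\Delta_{d-1}$. Indeed, a maximal simplex $P(T)$ of $\cT$ meets this facet in one of its own facets precisely when $\lno_j$ is a leaf of $T$, that facet being $P(T-\lno_j)$; conversely every maximal cell of the induced triangulation arises this way. Hence the set of degree pairs of the facet triangulation is $\overline{\mathcal{L}_j}=\bigcup_{i\in[d]}\overline{\mathcal{L}_j^{(i)}}$, obtained from the sectors by deleting the $j$-th left coordinate and decrementing the $i$-th right coordinate. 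By the inductive hypothesis $\overline{\mathcal{L}_j}$ determines the facet triangulation, hence every tree $T-\lno_j$; reattaching $\lno_j$ to the node $\rno_i$ recorded by the sector recovers every tree $T$ of $\cT$ in which $\lno_j$ is a leaf. Finally, a spanning tree on $\lnoset\sqcup\rnoset$ has at least $n-d+1\geq 1$ left leaves, since its $n$ left degrees are positive integers summing to $n+d-1$; so ranging $j$ over $[n]$ recovers every tree of $\cT$.

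I expect the main obstacle to be the sector-reconstruction step: proving that the partition of $\mathcal{L}_j$ into the sets $\mathcal{L}_j^{(i)}$ is forced by the raw lattice-point data $\mathcal{U}$, with no appeal to the trees themselves. This is exactly where the local structural lemmas for compatible trees --- the containment and degree-monotonicity of Lemma~\ref{lem:abstract-containment-sector}, together with the leaf-agreement of Corollary~\ref{cor:neighbour-trees} --- must be assembled into a correct two-sided induction along the $\rno_i$-direction, as in Theorem~\ref{thm:lattice-points-Chow}. A secondary point requiring care is the clean identification of the restricted trees with the facet triangulation so that the inductive hypothesis applies, along with the bookkeeping that uses the symmetry to reduce the case $n-1<d$ back to a product with the larger simplex listed first.
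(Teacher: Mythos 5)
Your proposal matches the paper's proof essentially step for step: the decomposition by left leaves, the recovery of the sector partition by a downward sweep in the $\rno_i$-coordinate (the paper's Algorithm~\ref{algo:derive+sectors}), soundness via the leaf-agreement clause of Corollary~\ref{cor:neighbour-trees}, completeness via the persistence and degree-monotonicity of Lemma~\ref{lem:abstract-containment-sector}, and the deletion-plus-induction on the facet triangulation. The only slip is the induction bookkeeping: the case $n=d$ is not an immediate base case (injectivity of $\Phi_{n,n}$ for $n>1$ requires the full argument), and since your deletion step can produce $n-1<d$ and force the symmetry swap, the induction should run on $n+d$ with base case $n=d=1$ (or $d=1$), exactly as the paper does.
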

\begin{proof}
  We proceed by induction on $n+d$. For $n = d = 1$ there is only one tree.
  Now consider a triangulation $\mathcal{T}$ represented by a collection of trees for $n+d > 2$.
	Assume that any triangulation of $\Dprod{j-1}{i-1}$ is uniquely determined by the degree vector pairs of its trees for $j+i < n+d$.
  We show the case $n \geq d$, the case $n \leq d$ is entirely analogous.
  Each left degree vector contains an entry equal to $1$ since
  \[
  2\cdot (n-1) + 1 = 2n-2 +1 = 2n -1 \geq n +d-1 \enspace .
  \]
Hence, we get a non-disjoint decomposition
\[
\bigcup_{j \in [n]} \mathcal{L}_j = \Phi_{n,d}(\mathcal{T}) \quad \mbox{for} \quad \mathcal{L}_j = \{(u,v) \mid u_j = 1\} \enspace .
\]
Now fix a $j \in [n]$. There is a partition of $\mathcal{L}_j$ in the sets
\[
\mathcal{L}_j^{(i)} = \{ (u,v) \mid (\lno_j,\rno_i) \in G,\;G \in \mathcal{T}~\mbox{has degree vector}~(u,v)~\mbox{with}~u_j=1\}
\]
where $\mathcal{L}_j^{(i)}$ is the image of $\cS_j^{(i)}$ in $\Phi_{n,d}$.

From $\mathcal{L}_j^{(i)}$ we can construct a set $\overline{\mathcal{L}_j^{(i)}}$ by removing the $j$th entry of the first component and decreasing the $i$th entry of the second component for all the pairs.
This corresponds to removing the leaf edge $(\lno_j,\rno_i)$ of the trees.
The resulting set
\[
\overline{\mathcal{L}_j} = \bigcup_{i \in [d]} \overline{\mathcal{L}_j^{(i)}}
\]
is the set of degree vectors of the deletion of $\mathcal{T}$ with respect to $j$. Hence, we can apply induction and deduce that $\overline{\mathcal{L}_j}$ uniquely defines the trees with the contained degree vectors. From the partition into the $\overline{\mathcal{L}_j^{(i)}}$ we can recover to which node $\lno_j$ is incident in the original tree. Therefore, we can construct all trees for which $\lno_j$ has degree $1$. Ranging over all $\lno_j \in L$, we get all trees of $\mathcal{T}$.

\smallskip

It remains to show how to construct the set $\mathcal{L}_j^{(i)}$ for each $\rno_i \in R$, which we now demonstrate.
Assume without loss of generality that $i = 1$ and apply Algorithm~\ref{algo:derive+sectors}.
\begin{algorithm}[htbp]
  \caption{Construct the degree pairs of a combinatorial sector of trees}
  \label{algo:derive+sectors}
  \begin{algorithmic}[1]
  	\Statex \textbf{Input:} $\Phi_{n,d}(\cT)$, the set of degree vector pairs of the trees $\cT$
  	\Statex \textbf{Output:} $\mathcal{L}_j^{(1)}$, the set of degree vector pairs with $u_j = 1$ whose tree contains $(\lno_j,\rno_1)$	
    \If{$u_j = 1$ for $(u,v)$ with $v_1 = n$}
    \State $\mathcal{K}_j \gets \{(u,v)\}$ \label{line:initial+K}
    \Else
    \State \Return $\emptyset$
    \EndIf
    \State $h \gets n-1$
    \While{$h > 0$}
    \ForAll{$(u,v) \in \mathcal{L}_j~\mbox{with}~v_1=h$}
    \If{$\exists k \in [d] \colon v_k > 1 \colon \exists\,w^{(k)} \colon (w^{(k)},v+e_1-e_k) \in \mathcal{K}_j$} \label{line:ancestor}
    \State $\mathcal{K}_j \gets \mathcal{K}_j \cup (u,v)$
    \EndIf
    \State $h \gets h-1$
    \EndFor
    \EndWhile 
    \State \Return $\mathcal{K}_j$
\end{algorithmic}
\end{algorithm}

\textbf{Claim:} $\mathcal{K}_j = \mathcal{L}_j^{(1)}$.

\textbf{Proof by induction} 
There is a unique tree $T_0$ with the right degree vector $\unit{[d]} + (n-1)\unit{1}$. If $u_j = 1$ then $\lno_j$ is a leaf and it is adjacent to $\rno_1$ because of the structure of the right degree vector.
Line~\ref{line:initial+K} in the algorithm guarantees that $T_0$ is in $\mathcal{K}_j$. Furthermore, the edge $(\lno_j,\rno_1)$ shows that it is also contained in $\mathcal{L}_j^{(1)}$.

Now, assume that $\mathcal{K}_j$ and $\mathcal{L}_j^{(1)}$ agree in all elements whose first entry of the second component is $h+1 \leq n$. 

Let $(u,v) \in \mathcal{L}_j$ such that $v_1 = h$ and $(w,v+e_1-e_k) \in \mathcal{K}_j$ an element fulfilling the condition in Line~\ref{line:ancestor}. These two vectors are the right degree vectors of two compatible trees $T_1$ and $T_2$. Note that $\mathcal{K}_j \subseteq \mathcal{L}_j$. As, by the induction hypothesis, $(\lno_j,\rno_1)$ is an edge of $T_2$ we can deduce with Corollary~\ref{cor:neighbour-trees} that this is also an edge of $T_1$. Hence, $(u,v)$ is an element of $\mathcal{L}_j^{(1)}$.

Conversely, let $(u,v) \in \mathcal{L}_j^{(1)}$ be with $v_1 = h$. By Lemma~\ref{lem:abstract-containment-sector}, there is a $k \in [d]$ and a $w \in \simplat{d-1}{n}$ such that in the tree with degree pair $(w + \unit{[n]},v+e_1-e_k)$ the node $\lno_j$ is a leaf and it is adjacent to $\rno_1$. The induction hypothesis implies that $(w + \unit{[n]},v+e_1-e_k) \in \mathcal{K}_j$. Now, Line~\ref{line:ancestor} shows that also $(u,v)$ is an element of $\mathcal{K}_j$.
\end{proof}

	This map is far from surjective.
	By \cite[Theorem 12.9]{Postnikov:2009}, every possible lattice point in $\simplat{d-1}{n}$ and $\simplat{n-1}{d}$ must appear precisely once, the only remaining choice is how to pair them up.
	This gives the following immediate corollary.

\begin{corollary} \label{cor:triangulation+bound}
  The number of all triangulations of $\Dprod{n-1}{d-1}$ is bounded from above by $\binom{n+d-2}{d-1}!$.
\end{corollary}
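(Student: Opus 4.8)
The plan is to bound $|\cT\cS(n,d)|$ by the cardinality of the image of the injective map $\Phi_{n,d}$. By Theorem~\ref{thm:phi+injective} the map $\Phi_{n,d}$ is injective, so $|\cT\cS(n,d)| \leq |\Phi_{n,d}(\cT\cS(n,d))|$, and it suffices to count the possible images.

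First I would describe the structure of a single image $\Phi_{n,d}(\cT)$. A triangulation $\cT$ consists of $K = \binom{n+d-2}{n-1}$ full-dimensional simplices by \cite[Theorem 6.2.13]{DeLoeraRambauSantos}, hence of $K$ trees, each contributing one pair $(u - \unit{[n]}, v - \unit{[d]})$. By \cite[Theorem 12.9]{Postnikov:2009}, every lattice point of $\simplat{d-1}{n}$ occurs as the (shifted) left degree vector of exactly one tree, and every lattice point of $\simplat{n-1}{d}$ occurs as the (shifted) right degree vector of exactly one tree. Thus the first coordinates of the $K$ pairs run over all of $\simplat{d-1}{n}$ without repetition, and the second coordinates run over all of $\simplat{n-1}{d}$ without repetition. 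Equivalently, $\Phi_{n,d}(\cT)$ is the graph of a bijection between these two lattice-point sets.

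Next I would count these bijections. The lattice points of $m\Delta_{s-1}$ number $\binom{m+s-1}{s-1}$, so $\simplat{d-1}{n}$ has $\binom{n+d-2}{n-1}$ points and $\simplat{n-1}{d}$ has $\binom{n+d-2}{d-1}$ points; since $(n-1)+(d-1) = n+d-2$ these two binomial coefficients are equal, and both coincide with the simplex count $K$. Therefore the number of bijections between the two sets is $K! = \binom{n+d-2}{d-1}!$, which bounds the image of $\Phi_{n,d}$ and hence the number of triangulations.

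There is no genuine obstacle here: the entire force of the statement comes from the injectivity of $\Phi_{n,d}$ proved in Theorem~\ref{thm:phi+injective}, while this corollary is a pure counting consequence. The only point meriting a moment's care is confirming that the two lattice-point sets share the common cardinality $K$, so that an element of the image is the graph of an honest \emph{bijection} rather than merely an injection, and that this cardinality matches the number of simplices per triangulation.
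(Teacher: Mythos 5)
Your argument is correct and matches the paper's own reasoning exactly: injectivity of $\Phi_{n,d}$ from Theorem~\ref{thm:phi+injective} together with the fact, via \cite[Theorem 12.9]{Postnikov:2009}, that each lattice point of $\simplat{d-1}{n}$ and of $\simplat{n-1}{d}$ occurs precisely once, so the image is the graph of a bijection between two sets of common cardinality $K=\binom{n+d-2}{d-1}$, giving the bound $K!$. Your extra care in checking that the two lattice-point counts agree with the simplex count is a sensible elaboration of what the paper leaves implicit.
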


\begin{remark}
  Note that this bound is tight for $n = 2$ as by \cite[Proposition 6.2.3]{DeLoeraRambauSantos} triangulations of $\Dprod{1}{d-1}$ are in bijection with permutations of $[d]$.
  Theorem 5.4 and Corollary 5.5 in \cite{Santos:2005} give upper bounds for regular subdivisions but, to the knowledge of the authors, this is the first upper bound on the number of all triangulations. 
	Recall from \cite[Theorem 6.2.19]{DeLoeraRambauSantos} that non-regular triangulations of $\Dprod{n-1}{d-1}$ exist if and only if $(n-2)(d-2)\geq 4$.  
  Even if there are many non-regular triangulations, the bound might be very coarse as we do not use the structure of the lattice points.
\end{remark}

Theorem \ref{thm:phi+injective} leads to the following natural question.
\begin{question}
Can one formulate an axiom system for lattice point pairs arising from triangulations of $\Dprod{n-1}{d-1}$?
\end{question}
This would give a cryptomorphic axiom system to that in Proposition \ref{prop:char-triang}.
We state some necessary conditions that lattice point pairs must satisfy to induce a triangulation of $\Dprod{n-1}{d-1}$.

We denote a lattice point pair of type $(n,d)$ by $p = (u,v) \in \simplat{d-1}{n} \times \simplat{n-1}{d}$.
We say two lattice point pairs $p, p'$ are \emph{adjacent} if $u, u'$ and $v,v'$ differ by one in precisely two coordinates and differ nowhere else.
Note that every pair of trees that differ by a flip induce adjacent lattice point pairs, but the converse is not true.
By the second condition of Proposition \ref{prop:char-triang}, the number of neighbours a tree has in the flip graph is equal to the number of edges in the tree that are not leaves.

We defined the sets $\mathcal{L}_j = \SetOf{(u,v)}{u_j=1}$ in the proof of Theorem \ref{thm:phi+injective} for all $j \in [n]$.
We construct a \emph{deletion} of this set $\overline{\mathcal{L}_j}$ by removing the $j$th entry of the first component and decreasing an entry in the second component of all lattice point pairs.
Note the distinction between left and right is arbitrary and we can analogously define the set $\mathcal{L}_i = \SetOf{(u,v)}{v_i=1}$ and its deletion $\overline{\mathcal{L}_i}$.
In the proof we construct a specific deletion, but we just need to ensure a well behaved one exists.
This, along with \cite[Theorem 12.9]{Postnikov:2009}, gives the following necessary conditions on pairs of lattice points:

\begin{corollary} \label{cor:comp+necessary+cond}
Let $\cP$ be a set of lattice point pairs that induces a triangulation of $\Dprod{n-1}{d-1}$. Then $\cP$ satisfies the following conditions:
\begin{itemize}
\item The projections from $\cP$ onto $\simplat{d-1}{n}$ and $\simplat{n-1}{d}$ are bijections.
\item Each $p \in \cP$ has at least $n + d - 1 - l(p)$ adjacent lattice point pairs in $\cP$, where $l(p)$ is the number of coordinates in $p$ whose entry is $1$.
\item For every $\mathcal{L}_k \subset \cP$, there exists a deletion $\overline{\mathcal{L}_k}$ satisfying the first two conditions.
\end{itemize}
\end{corollary}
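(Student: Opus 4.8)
The plan is to verify the three bullets in turn, using throughout that $\cP=\Phi_{n,d}(\cT)$ for a genuine triangulation $\cT$ of $\Dprod{n-1}{d-1}$ encoded by spanning trees as in Proposition~\ref{prop:char-triang}. For the first bullet I would note that each tree of $\cT$ is spanning on the $n+d$ nodes $\lnoset \sqcup \rnoset$, hence has $n+d-1$ edges and a right degree vector $v$ with $\sum_i v_i = n+d-1$ and all $v_i \ge 1$; thus $v - \unit{[d]} \in \simplat{n-1}{d}$, so the right projection is well defined, and symmetrically the left projection lands in $\simplat{d-1}{n}$. Injectivity of both projections is exactly Lemma~\ref{lem:degree-vectors-trees}, and since $|\cP| = |\cT| = K = \binom{n+d-2}{n-1}$ coincides with the number of lattice points in either simplex, an injection between equinumerous finite sets is a bijection; this is also the content of \cite[Theorem~12.9]{Postnikov:2009}.

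For the second bullet, fix $p \in \cP$ with underlying tree $T$. The coordinates of its degree vectors equal to $1$ are precisely the degree-$1$ nodes of $T$, so $T$ has $l(p)$ leaves; assigning to each leaf its unique incident edge shows $T$ has at most $l(p)$ leaf edges, hence at least $(n+d-1)-l(p)$ non-leaf edges. By the second condition of Proposition~\ref{prop:char-triang}, each non-leaf edge $e$ gives a tree $H_e \in \cT$ with $H_e \supseteq T-e$, so $H_e=(T-e)+e'$ with $e=(\lno_s,\rno_q)$, $e'=(\lno_t,\rno_p)$ and $e \ne e'$. Distinct non-leaf edges yield distinct $H_e$ (a coincidence would force $H_e \supseteq T$), and each $H_e$ is a tree of $\cT$ distinct from and compatible with $T$. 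Were $s=t$ or $p=q$, one of the degree vectors would be unchanged, contradicting Lemma~\ref{lem:degree-vectors-trees}; hence the right degree changes by $\unit{p}-\unit{q}$ and the left by $\unit{t}-\unit{s}$, so the pair of $H_e$ is adjacent to $p$. This produces at least $(n+d-1)-l(p)$ distinct adjacent pairs in $\cP$.

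For the third bullet I would realise the deletion geometrically. Fix a left node $\lno_k$ (the right case being symmetric under interchanging the two simplices). The pairs in $\mathcal{L}_k$ are exactly those whose tree $T$ has $\lno_k$ as a leaf, with some unique neighbour $\rno_{i(T)}$; removing $\lno_k$ and its edge produces a spanning tree $T'$ on $(\lnoset\setminus\{\lno_k\})\sqcup\rnoset$. A degree count shows that a full-dimensional simplex of $\cT$ meets the facet $\conv\{\unit{j}\mid j\neq k\}\times\Delta_{d-1}\cong\Delta_{n-2}\times\Delta_{d-1}$ in a facet-spanning simplex precisely when $\lno_k$ is a leaf, so the trees $T'$ are exactly the maximal simplices of the triangulation $\cT_k$ induced by $\cT$ on this facet. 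The pair of $T'$ arises from $p$ by deleting the $k$th left coordinate and decreasing the $i(T)$th right coordinate, so this choice defines a deletion $\overline{\mathcal{L}_k}=\Phi_{n-1,d}(\cT_k)$ of the prescribed form. As $\cT_k$ is again a triangulation of a product of two simplices, the first two bullets applied to it show $\overline{\mathcal{L}_k}$ satisfies conditions one and two.

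The step I expect to be the main obstacle is the third bullet, specifically checking that ``deleting the leaf $\lno_k$'' coincides with ``restricting $\cT$ to the facet that drops the vertex $\unit{k}$'': one must match the bookkeeping so that $\lno_k$ having degree $1$ is equivalent to the corresponding face of the simplex being full-dimensional in the facet, which is what makes $\overline{\mathcal{L}_k}$ the lattice-point-pair set of a genuine triangulation and hence lets it inherit the first two conditions automatically. By contrast, the first two bullets are comparatively routine consequences of \cite[Theorem~12.9]{Postnikov:2009}, Proposition~\ref{prop:char-triang}, and Lemma~\ref{lem:degree-vectors-trees}.
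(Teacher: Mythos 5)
Your proof is correct and follows the same route the paper intends: the corollary is derived there from the discussion immediately preceding it (flip-neighbours of a tree correspond bijectively to its non-leaf edges and yield adjacent degree-vector pairs via Lemma~\ref{lem:degree-vectors-trees} and Proposition~\ref{prop:char-triang}), together with the deletion construction from the proof of Theorem~\ref{thm:phi+injective}, in which the deletion at a leaf $\lno_k$ is exactly the set of degree pairs of the triangulation induced on the facet obtained by dropping $\unit{k}$. You have simply made explicit the details the paper leaves implicit, such as the equinumerosity argument for the first bullet and the distinctness of the flip-neighbours $H_e$.
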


Recall that triangulations of $\Dprod{n-1}{1}$ are in bijection with the permutations in $S_n$.
By Theorem \ref{thm:phi+injective}, such a triangulation is also determined by a certain collection of lattice point pairs.
Let $\mu$ be the permutation corresponding to the triangulation.
Then the lattice point pairs are given by
\[
([k,n+1-k],[1^{\mu(k)-1},2,1^{n-\mu(k)}]) \qquad \text{ for all } k \in [n] \enspace .
\]
The lattice point on the left corresponds to an element $k \in [n]$, the lattice point on the right corresponding to the element $\mu(k)$ that it is mapped to.
This establishes the bijection with $S_n$.

Theorem \ref{thm:phi+injective} allows us to generalise this construction.
We fix orderings on $\simplat{d-1}{n}$ and $\simplat{n-1}{d}$ to establish a correspondence with $[K]$, where $K = \binom{n+d-2}{n-1}$.
Triangulations of $\Dprod{n-1}{d-1}$ now correspond to elements of the symmetric group $S_K$, where each lattice point pair determines an element of $[K]$ and where it is mapped to.
Note that we have natural $S_n$ and $S_d$ actions given by changing the ordering on the lattice points, therefore each triangulation gives rise to a subset of a conjugacy class of $S_K$.
It would be interesting to study this link to the symmetric group in more detail.

\section{Matching Stacks and Transversal Matroids} \label{sec:matching+stacks+transversal+matroids}

In some sense, matching fields contain complementary information to transversal matroids.
While transversal matroids encode on which subsets of the nodes a graph contains matchings, a matching field contains a matching for all $d$-subsets of $\lnoset$ and one is interested in the interplay of the matchings.

\begin{definition}
  A \emph{matching stack} on $\lnoset \sqcup \rnoset$ is a map which assigns to each pair $(J,I)$ with $J \subseteq \lnoset, I \subseteq \rnoset$ and $|J| = |I|$ a perfect matching $\mu$ on $J \sqcup I$ (support axiom). 
 
A matching stack is a \emph{matching ensemble} if the following axioms are satisfied:
  \begin{itemize}
\item each submatching $\eta$ of $\mu$ on subsets $J' \subset J$, $I' \subset I$ is the image of $(J',I')$ (closure axiom).  
\item the matchings for all fixed $J \subseteq \lnoset$ and for all fixed $I \subseteq \rnoset$ form linkage matching fields (linkage axiom).
\end{itemize}
\end{definition} 

Matching ensembles were first studied in~\cite{OhYoo-ME:2013}.
The main result in~\cite{OhYoo-ME:2013} is the equivalence of triangulations of $\Delta_{n-1} \times \Delta_{d-1}$ and matching ensembles on $\lnoset \sqcup \rnoset$. We present an intermediate result to demonstrate further research directions. 

The tropical Stiefel map, extensively studied in \cite{FinkRincon:2015}, produces a \emph{tropical linear space} for a matrix over the tropical semiring $\TT = (\RR\cup\{-\infty\},\max,+)$.
Explicitly, the map takes a tropical matrix $A$ in $\TT^{d\times n}$ and associates its \emph{tropical Pl\"{u}cker vector} $p$ in $\TT^{\binom{n}{d}}$.
The coordinates of $p$ are given by 
\[
p_I\ =\ \max_{\sigma \in S_d}\left(\sum_{\ell = 1}^{d}A_{i_{\ell}, \sigma(\ell)}\right)
\]
for each subset $I = \{i_1,\dots,i_d\} \in \binom{[n]}{d}$, where the maximum is taken over the symmetric group $S_d$.
This gives rise to a matroid subdivision of the hypersimplex $\Delta_{d,n}$ which is dual to a \emph{Stiefel tropical linear space}.
Each cell in the regular matroid subdivision arising from such a height function is the matroid polytope of a transversal matroid.
In particular, these matroids are the transversal matroids of the covector graphs for the cells in the regular subdivision of $\Dprod{n-1}{d-1}$ induced by $A$. 

We give a combinatorial sketch of a similar polyhedral construction from \cite[Theorem 7]{HerrmannJoswigSpeyer:2014} . 
They start with the bipartite graphs corresponding to the full-dimensional cells in a not-necessarily regular subdivision of $\Delta_{k-1} \times \Delta_{d-1}$ where $k = n-d$.
They augment the left node set of the bipartite graphs by $d$ dummy nodes.
By connecting all nodes in $\rnoset$ of degree $1$ in each graph $G$ to the corresponding dummy node, they ensure that the transversal matroid on $\lnoset$ has no loops.
These two constructions motivates the following, more general construction.

\begin{definition}
  Let $\cG$ be a collection of bipartite graphs on the same node set $\lnoset \sqcup \rnoset$.
	The \emph{combinatorial Stiefel map} associates to each graph $G$ in $\cG$ its corresponding transversal matroid $M_G$.
\end{definition}

With the construction for Theorem~\ref{thm:constructed+topes}, we can start with a linkage matching field and construct trees in a natural way for all degree vectors.
Recall that the topes are compatible with the matching field but the tope linkage covectors may not be. 
Taking the combinatorial Stiefel map of the collection of the maximal tope linkage covectors results in a collection of transversal matroids. Inspired by \cite[Corollary 5.6]{FinkRincon:2015} and \cite[Theorem 7]{HerrmannJoswigSpeyer:2014} we conjecture the following.

\begin{conjecture}
  A linkage matching field is determined by the collection of transversal matroids associated to the maximal tope linkage covectors by the combinatorial Stiefel map.
\end{conjecture}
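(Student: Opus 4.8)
The plan is to recover the linkage matching field $\cM$ in two stages: first reconstruct the maximal tope linkage covectors as \emph{labelled} bipartite graphs from the tagged collection of their transversal matroids, and then invoke the cryptomorphism of Theorem~\ref{thm:cryptomorphism+linkage+mf}. For the second stage one only needs the labelled covectors: by Proposition~\ref{prop:unique+contained+tope} each covector $C_w$ determines the maximal topes it contains, these maximal topes form a tope arrangement, and by Theorem~\ref{thm:cryptomorphism+linkage+mf} the tope arrangement recovers $\cM$. Thus the entire content of the conjecture is concentrated in the first stage, and the combinatorial Stiefel data must be shown to pin down each covector $C_w$ together with the labelling of $\rnoset$.

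The starting observation is that the bases of $M_{C_w}$ are exactly the $d$-subsets $\sigma \subseteq \lnoset$ carrying a perfect matching $\sigma \sqcup \rnoset$ inside $C_w$, and by Lemma~\ref{lem:enriched-matching} each such matching belongs to $\cM$. By Proposition~\ref{prop:topes+products+simplices} the matchings contained in a maximal tope of right degree vector $v$ are precisely the transversals of an ordered partition $(N_1,\dots,N_d)$ of $\lnoset$, whose underlying unordered partition is recorded as a partition matroid; tracking how the supported $d$-subsets change as $w$ varies across the lattice therefore lets one read off, for every maximal tope, its unordered support partition. What the transversal matroid forgets is exactly the identification of each block $N_i$ with the right node $\rno_i$, so the task reduces to reconstructing these labellings consistently.

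To fix the labelling I would run an inductive sector argument in the spirit of Theorems~\ref{thm:lattice-points-Chow} and~\ref{thm:phi+injective}. Lemma~\ref{lem:neighbour-topes} shows that passing from right degree vector $v$ to $v+\unit{p}-\unit{q}$ moves a single left node out of the block of $\rno_q$ and into the block of $\rno_p$, leaving all other blocks fixed. Starting from a corner of the lattice where the partition degenerates to a near-matching and the labelling is essentially forced, one propagates the labels along the edges of the lattice using these one-element transfers; the combinatorial sectors $\cS_j^{(i)}$ of Definition~\ref{def:combinatorial+sector}, applied now to the family of labelled topes, package precisely which left node is transferred to which right node at each step. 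Propagating around the two-dimensional faces of the lattice and checking that Lemma~\ref{lem:neighbour-topes} forces agreement on each such face would show the labelling is well defined, after which the recovered ordered partitions are the maximal topes and the reduction of the first paragraph completes the proof.

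The main obstacle is the global consistency of this labelling. Since the transversal matroids are genuinely unlabelled on $\rnoset$, a priori two matching fields differing by a permutation of the right nodes could produce the same tagged collection of matroids, and one must rule this out by showing that the adjacency constraints of Lemma~\ref{lem:neighbour-topes}, read across the whole lattice, admit a unique labelling compatible with the block sizes recorded by the right degree vectors, with no nontrivial relabelling preserving the full collection. This is the point at which the analogy with \cite[Corollary~5.6]{FinkRincon:2015} and \cite[Theorem~7]{HerrmannJoswigSpeyer:2014} must be made precise: there the ambient genericity underlying the Stiefel map supplies the needed rigidity, whereas here it has to be replaced by the purely combinatorial linkage axiom together with the weak compatibility of Proposition~\ref{prop:weak-compatibility}. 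Establishing this rigidity---equivalently, that the matroid subdivision cut out by the family $\{M_{C_w}\}$ remembers the right-node labelling---is where I expect the real difficulty to lie.
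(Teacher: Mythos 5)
You are attempting to prove a statement that the paper itself leaves as an open conjecture --- the authors offer no proof, and your proposal does not close the gap either. What you have is a reduction, not a proof. The second stage of your plan (labelled maximal tope linkage covectors $\Rightarrow$ maximal topes via Proposition~\ref{prop:unique+contained+tope} $\Rightarrow$ tope arrangement $\Rightarrow$ matching field via Theorem~\ref{thm:cryptomorphism+linkage+mf}) is sound and essentially already available in the paper. But the first stage --- recovering each covector as a \emph{labelled} bipartite graph from its transversal matroid --- is precisely the content of the conjecture, and your final paragraph concedes that the decisive rigidity statement is unproven. The combinatorial Stiefel map discards the presentation of the transversal matroid entirely: from $M_{C_w}$ one learns only which $d$-subsets of $\lnoset$ admit a system of distinct representatives in $C_w$, with no record of which left node meets which right node, and not even the right degree vector $w$. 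The paper's own remark that the Stiefel image of a tope is merely a partition matroid shows how severe this information loss is. Your plan to read off the unordered support partitions by ``tracking how the supported $d$-subsets change across the lattice'' already presupposes that the collection of matroids arrives indexed by lattice points of the simplex, which is an additional hypothesis not clearly granted by the conjecture's statement of ``the collection of transversal matroids''; if the collection is an unindexed set, even the correspondence between matroids and degree vectors must be reconstructed first.

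Two further soft spots. First, your claim that every perfect matching on $\sigma \sqcup \rnoset$ inside a maximal tope linkage covector belongs to $\cM$ does not follow directly from Lemma~\ref{lem:enriched-matching}; it needs an argument combining the tree structure of the covector with the compatibility of Proposition~\ref{prop:weak-compatibility} (two distinct perfect matchings on the same node set inside a forest would force a cycle). Second, and centrally, the propagation of right-node labels via Lemma~\ref{lem:neighbour-topes} and the combinatorial sectors of Definition~\ref{def:combinatorial+sector} is applied to the family of \emph{labelled} topes --- objects you do not yet have --- so the argument is circular as written unless you can show that the adjacency data visible at the level of unlabelled matroids (symmetric differences of base families of neighbouring $M_{C_w}$, say) suffices to single out the transferred left node \emph{and} the receiving right node. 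Ruling out a nontrivial permutation of $\rnoset$ that preserves the whole collection of matroids is exactly the open problem; naming it is useful, but it is not a proof, and the analogy with \cite[Corollary~5.6]{FinkRincon:2015} and \cite[Theorem~7]{HerrmannJoswigSpeyer:2014} cannot substitute for it because those results lean on genericity of a realising matrix that a general linkage matching field does not possess.
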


\begin{remark}
  As maximal topes arising from a linkage matching field play an important role, one should keep in mind that the combinatorial Stiefel image of a tope is just a partition matroid.
  However, in the tropical Stiefel map which gives rise to a matroid subdivision of the hypersimplex, they do not correspond to maximal cells.
\end{remark}

The latter conjecture transfers the connection between regular subdivisions of $\Dprod{n-1}{d-1}$ and regular matroid subdivisions of the hypersimplex $\Delta_{d,n}$ to the purely combinatorial setting of matching fields.
It is motivated by the interplay between matching fields, matching stacks and triangulations of $\Dprod{n-1}{d-1}$ which we now discuss in more detail.

We introduce two ways to move between matching fields and matching stacks: completion and extension.
Their relationship is shown in Figure \ref{fig:fields+stacks}.
\begin{figure}
\begin{tikzcd}[column sep=large]
\begin{tabular}{|c|} \hline $(n-d,d)$-matching \\ stack \\ \hline \end{tabular} \arrow{r}{\text{completion}} & \begin{tabular}{|c|} \hline $(n,d)$-matching \\ field \\ \hline \end{tabular} \arrow[shift left]{r}{\text{extension}} & \begin{tabular}{|c|} \hline $(n,d)$-matching \\ stack \\ \hline \end{tabular} \arrow[shift left]{l}{\text{extraction}} \\
\begin{tabular}{|c|} \hline $(n-d,d)$-matching \\ ensemble \\ \hline \end{tabular} \arrow{r}{\text{completion}} \arrow[u,hook] & \begin{tabular}{|c|} \hline Linkage $(n,d)$- \\ matching field \\ \hline \end{tabular} \arrow[dashed,shift left]{r}{\text{extension}} \arrow[u,hook] & \begin{tabular}{|c|} \hline $(n,d)$-matching \\ ensemble \\ \hline \end{tabular} \arrow[u,hook] \arrow[shift left]{l}{\text{extraction}}
\end{tikzcd}
\caption{The relationships between different classes of matching fields and matching stacks.}
\label{fig:fields+stacks}
\end{figure}
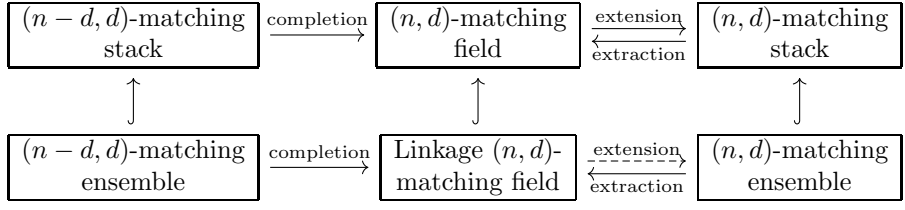
Let $\mathcal{S}$ be a matching stack on $\lnoset \sqcup \rnoset$.
We introduce $d$ dummy nodes to get the left node set $\hat{\lnoset} = \lnoset \cup \SetOf{\lno_{n+i}}{i \in [d]}$.
To the matching $\mu$ on $J \sqcup I$ in $\mathcal{S}$ we associate the matching $\hat{\mu}$ defined as follows:
\[
\hat{\mu}(\lno_j) = \mu(\lno_j) \quad \mbox{ for } \lno_j \in J \quad \mbox{and} \quad \hat{\mu}(\lno_{n+h}) = \rno_h \quad \mbox{for all~} \rno_h \in \rnoset \setminus I \enspace .
\]
This yields a matching field on $\hat{\lnoset} \sqcup \rnoset$, which we call the \emph{matching field completion} of $\cS$.
This is a pointed matching field in the sense of \cite[Example 1.4]{SturmfelsZelevinsky:1993}.
Note that the resulting matching fields have a very particular structure, therefore there is no inverse construction in general.

\begin{theorem} \label{thm:linkage-dummy-extension}
  Let $\mathcal{M}$ be the matching field completion of the matching stack $\mathcal{S}$. Then $\mathcal{M}$ fulfils left linkage if and only if $\mathcal{S}$ fulfils left linkage. 
\end{theorem}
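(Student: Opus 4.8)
The plan is to reduce left linkage on both sides to the tree description of Lemma~\ref{lem:linkage+and+tree}, exploiting that the $d$ dummy nodes enter every completed matching as pendant edges. Write $\hat\lnoset = \lnoset \dcup \cD$ with $\cD = \{\lno_{n+1},\dots,\lno_{n+d}\}$, where $\lno_{n+h}$ is the dummy attached to $\rno_h$. For a $d$-subset $\hat\sigma \subseteq \hat\lnoset$ put $J = \hat\sigma \cap \lnoset$ and $H = \{h \mid \lno_{n+h} \in \hat\sigma\}$; then by construction
\[
M_{\hat\sigma} \;=\; \mu_{(J,\, \rnoset \setminus \{\rno_h \mid h \in H\})} \;\cup\; \{(\lno_{n+h},\rno_h) \mid h \in H\} \enspace .
\]
First I would record this explicit shape, since it makes the interaction between $\cS$ and its completion transparent.

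Next I would fix a $(d+1)$-subset $\tau \subseteq \hat\lnoset$, the index of a prospective linkage covector of $\cM$, and split it as $\tau = J_0 \dcup \{\lno_{n+h} \mid h \in H_0\}$ with $|J_0| = a$ and $|H_0| = d+1-a$; put $I^\ast = \rnoset \setminus \{\rno_h \mid h \in H_0\}$, so that $|I^\ast| = a-1$. Removing a real node $\lno_j \in J_0$ from $\tau$ produces the completed matching whose real part is $\mu_{(J_0\setminus\lno_j,\, I^\ast)}$, whereas removing a dummy $\lno_{n+h}$ produces one whose real part is the matching $\mu_{(J_0,\, I^\ast\cup\{\rno_h\})}$ with shifted target. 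Hence $U_\tau = \bigcup_{\lno\in\tau} M_{\tau\setminus\lno}$ is the union of these real parts together with the $|H_0|$ edges $(\lno_{n+h},\rno_h)$. The key observation is that each dummy $\lno_{n+h}$ is a leaf of $U_\tau$, carrying only the edge $(\lno_{n+h},\rno_h)$; deleting these leaves therefore preserves whether $U_\tau$ is a tree and drops the degree of each $\rno_h$, $h\in H_0$, by exactly one. By Lemma~\ref{lem:linkage+and+tree}, $\cM$ is left linkage at $\tau$ if and only if the core graph $U_\tau^{\circ}$ on $J_0 \dcup \rnoset$ --- the union of all the real parts --- is a tree in which every $\rno_i \in I^\ast$ has degree $2$ and every $\rno_h$, $h \in H_0$, has degree $1$.

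It then remains to match $U_\tau^{\circ}$ with a left linkage covector of $\cS$. The subunion $\bigcup_{\lno_j \in J_0}\mu_{(J_0\setminus\lno_j,\, I^\ast)}$ is exactly the covector that the fixed-$I^\ast$ matching field of $\cS$ assigns to the $(|I^\ast|+1)$-subset $J_0$, and the assignment $\tau \mapsto (J_0,I^\ast)$ is a bijection between the $(d+1)$-subsets of $\hat\lnoset$ and the covector indices $(J,I)$ of $\cS$ with $|J|=|I|+1$. Reading the equivalence in both directions now yields the theorem: if $\cS$ is left linkage, this $\cS$-covector is a tree with all $I^\ast$-degrees equal to $2$, and one checks that the extra matchings $\mu_{(J_0, I^\ast\cup\{\rno_h\})}$ attach each $\rno_h$ as a fresh leaf without creating a cycle, so $U_\tau^{\circ}$, and with it $U_\tau$, is a tree of the required degrees; conversely, treeness of every $U_\tau$ forces each $\cS$-covector to be a tree of the correct shape, so $\cS$ is left linkage. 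I expect the main obstacle to sit precisely at the dummy-covered right nodes $\rno_h$: there the completed matchings use the \emph{shifted} target $I^\ast\cup\{\rno_h\}$ rather than $I^\ast$, so controlling how $\mu_{(J_0,\,I^\ast\cup\{\rno_h\})}$ embeds into the fixed-$I^\ast$ covector is the delicate point --- and it is here, rather than at the nodes of $I^\ast$ (which are handled verbatim by fixed-target linkage), that the full structure of $\cS$, namely its left linkage together with the compatibility of its matchings across nested targets, must be used.
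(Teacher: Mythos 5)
Your setup follows the paper's proof closely: the same explicit description of the completed matchings, the same correspondence $\tau \leftrightarrow (J_0,I^\ast)$ between $(d+1)$-subsets of $\hat\lnoset$ and covector indices of $\cS$, and the same reduction via Lemma~\ref{lem:linkage+and+tree} after stripping the dummy leaves. The backward direction is essentially sound, though ``treeness of every $U_\tau$ forces each $\cS$-covector to be a tree of the correct shape'' needs one more line: each $\rno_i \in I^\ast$ already has two distinct neighbours inside $\bigcup_{\lno_j \in J_0}\mu_{(J_0\setminus\{\lno_j\},\,I^\ast)}$ (the matching isolating the current partner of $\rno_i$ must match it elsewhere), so this spanning subforest of the tree $U_\tau^\circ$ has exactly $2|I^\ast|$ edges on $2|I^\ast|+1$ nodes and is therefore itself a tree with all $I^\ast$-degrees equal to $2$.

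The forward direction, however, is not closed. The sentence ``one checks that the extra matchings $\mu_{(J_0,\,I^\ast\cup\{\rno_h\})}$ attach each $\rno_h$ as a fresh leaf without creating a cycle'' is exactly the step that requires proof: that matching contributes $|I^\ast|$ further edges into $I^\ast$, and the $I^\ast$-degrees of $U_\tau^\circ$ stay at $2$ only if these edges already lie in the covector $C_{(J_0,I^\ast)}$, i.e.\ only if $\mu_{(J_0,\,I^\ast\cup\{\rno_h\})}$ restricted to $I^\ast$ coincides with $\mu_{(J_0\setminus\{\lno_j\},\,I^\ast)}$, where $\lno_j$ is the partner of $\rno_h$. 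That is a closure-type compatibility between matchings with \emph{different, nested} targets. A matching stack carries no such compatibility by definition, and left linkage of $\cS$ only constrains families of matchings sharing a common target $I$, so it does not supply it: already for $d\geq 3$ the top matching $\mu_{(\lnoset,\rnoset)}$ is untouched by the left linkage hypothesis and can be chosen so that its restriction to a $(d-1)$-subset $I^\ast$ gives some $\rno_i\in I^\ast$ a third neighbour, producing a cycle in $U_\tau$. Your closing sentence correctly diagnoses this as the delicate point and even names the missing ingredient (``compatibility of its matchings across nested targets''), but that ingredient is not a hypothesis of the theorem, so the implication from left linkage of $\cS$ to left linkage of $\cM$ is not established. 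For comparison, the paper's own proof asserts the identical containment in one unjustified sentence (``the submatching obtained by deleting it is contained in $C$''); you have reproduced its argument together with its most fragile step, and to complete either version one must either prove that containment or strengthen the hypothesis on $\cS$ (as the closure axiom of a matching ensemble does).
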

\begin{proof}
  Consider a linkage covector $C$ of $\cS$ on the node set $J \sqcup I$ such that $|J| = |I|+1$.
	We will construct the linkage covector $D$ of $\cM$ on the node set $\hat{J} \sqcup \rnoset$ where $\hat{J} = J \cup \SetOf{\lno_{n+i}}{\rno_i \notin I}$.
	The matching $M_j$ on $\hat{J} \setminus \{\lno_j\}$, where $\lno_j \in J$, is the union of the matching in $C$ obtained by isolating $\lno_j$ with the edges $\SetOf{(\lno_{n+i},\rno_i)}{\rno_i \notin I}$.
	The matching $M_{n+i}$ on $\hat{J} \setminus \{\lno_{n+i}\}$, where $\rno_i \notin I$, is the union of the matching in $\cS$ on the node set $J \sqcup (I \cup \{\rno_i\})$ with the edges $\SetOf{(\lno_{n+k},\rno_k)}{\rno_k \notin I \cup \{\rno_i\}}$.
	Note that if the edge $(\lno_j,\rno_i)$ is in $M_{n+i}$, the submatching obtained by deleting it is contained in $C$.
	Therefore $D$ is the union of $C$ and pairs of edges $\SetOf{(\lno_j,\rno_i),(\lno_{n+i},\rno_i)}{\rno_i \notin I, (\lno_j,\rno_i) \in M_{n+i}}$, so it is a tree with degree two on all right nodes.
	Any linkage covector in $\cM$ can be constructed this way and so it must satisfy the linkage property.

	Conversely, consider a linkage covector of $\mathcal{M}$ on the node set $\hat{J} \sqcup \rnoset$.
	Removing the edges incident with the dummy nodes yields a tree on $J \sqcup \rnoset$ in which nodes in $I$ have degree $2$ and nodes in $\rnoset \setminus I$ have degree $1$.
	Deleting the edges adjacent to nodes in $\rnoset \setminus I$ gives the union of matchings on $J \sqcup I$ arising in $\cS$.
	As we have only removed leaves from a tree, this resulting graph is also a tree with all right nodes degree 2.
\end{proof}
A corollary to this result is that the matching field completion of a matching ensemble is always a linkage matching field, as shown in Figure \ref{fig:fields+stacks}.
Note that it is not a necessary condition, as a matching stack need not satisfy the closure axiom to give rise to a linkage matching field via completion.

Matching field completion is the analogous construction to the construction in \cite{HerrmannJoswigSpeyer:2014} for triangulations of $\Dprod{n-d-1}{d-1}$.
Their construction was a canonical way to embed $\Dprod{n-d-1}{d-1}$ as a root polytope in $\RR^{n+d}$.
However, as a subtriangulation of a triangulation of $\Dprod{n-1}{d-1}$, in general it does not give enough information to determine the whole triangulation of $\Dprod{n-1}{d-1}$.
This can be seen from our construction: an $(n-d,d)$-matching stack or ensemble can be completed to an $(n,d)$-matching field, but this does not live in a unique $(n,d)$-matching stack or ensemble.
In the latter case, it may not live in any matching ensemble, as we describe now.

The other construction we consider is \emph{extension}.
Given any $(n,d)$-matching field, we can always extend it to an $(n,d)$-matching stack by adding in arbitrary matchings for all $J \sqcup I$ with $|J|=|I|$.
Conversely, given an $(n,d)$-matching stack one can obtain an $(n,d)$-matching field by extracting the $d\times d$ matchings of the matching stack.
Note that this is analogous to the extraction method for triangulations of $\Dprod{n-1}{d-1}$, as they give rise to matching ensembles.

The picture is not so clear for linkage matching fields and matching ensembles.
Given an $(n,d)$-matching ensemble, we've seen extraction gives rise to a polyhedral $(n,d)$-matching field, a subset of linkage matching fields defined by coming from a matching ensemble.
However, we do not know the intrinsic characteristics of polyhedral matching fields, leading to the following question.

\begin{question}
When can a linkage $(n,d)$-matching field be extended to an $(n,d)$-matching ensemble?
\end{question}

We make a step in this direction with the following conjecture.
We say that a matching field $\mathcal{M}$ satisfies the \emph{compatible right submatching property} if and only if the following holds:

Let $\mu_1, \ldots, \mu_r$ be submatchings of matchings in $\mathcal{M}$ on $J \sqcup I_1, \ldots, J \sqcup I_r$ where $I := \bigcup_{[r]} I_s \subseteq \rnoset$ with $|J|+1 = |I|$.
Then $T = \bigcup_{[r]} \mu_s$ is a forest on $J \sqcup I$ and each matching $\mu$ of size $|J|$ in $T$ is compatible with the matchings in $\mathcal{M}$.

\begin{conjecture}
A (left) linkage matching field is polyhedral if and only if it satisfies the compatible right submatching property.
\end{conjecture}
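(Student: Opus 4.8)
The plan is to reduce both directions to the equivalence between triangulations of $\Dprod{n-1}{d-1}$ and matching ensembles established in~\cite{OhYoo-ME:2013}, using that a linkage matching field is polyhedral precisely when it arises as the extraction of a matching ensemble $\cE$. Throughout write $m = |J|$.

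For the forward direction, suppose $\cM$ is the extraction of a matching ensemble $\cE$. Given submatchings $\mu_1,\dots,\mu_r$ of matchings in $\cM$ on $J \sqcup I_1, \dots, J \sqcup I_r$ with $I = \bigcup_s I_s$ and $|J|+1 = |I|$, the closure axiom forces each $\mu_s$ to be the matching $\cE$ assigns to $(J,I_s)$. Since the matchings of $\cE$ with fixed left set $J$ form a right linkage matching field, the union of the matchings $\cE$ assigns to the $m$-subsets of $I$ is a right linkage covector $C$, a tree in which every node of $J$ has degree two. As $T = \bigcup_s \mu_s$ is a subgraph of $C$ it is a forest, and every size-$m$ matching contained in $T$ is a size-$m$ matching contained in $C$, hence one of the matchings $\cE$ assigns to an $m$-subset of $I$. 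Because $\cE$ comes from a single triangulation all of its matchings are pairwise compatible, and in particular compatible with $\cM$; this is exactly the compatible right submatching property.

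The converse is the substantive part. Starting from a linkage matching field $\cM$ satisfying the property, I would build a matching ensemble $\cE$ whose extraction is $\cM$ and then invoke~\cite{OhYoo-ME:2013}. For each left set $J$ of size $m$ I would assemble a right matching field $\cM_J$ on $J \sqcup \rnoset$ whose matchings are the size-$m$ submatchings of $\cM$ that are compatible with $\cM$; the forest clause of the property, applied to the submatchings $M_{\sigma}|_J$ obtained by restricting the matchings of $\cM$ to supersets $\sigma \supseteq J$, should guarantee that the union of these matchings over the $m$-subsets of any $(m+1)$-subset $I \subseteq \rnoset$ is a tree, i.e.\ that $\cM_J$ satisfies right linkage. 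Defining $\mu_{J,I}$ to be the matching of $\cM_J$ on $I$ then produces a matching stack $\cE = \{\mu_{J,I}\}$. The compatibility clause is what I would use to verify the closure axiom, namely that the family is consistent under passing to submatchings across different $J$, while left linkage for fixed $I$ should be inherited from the linkage of $\cM$ together with this consistency. Having checked the support, closure and both linkage axioms, $\cE$ is a matching ensemble, hence corresponds to a triangulation of $\Dprod{n-1}{d-1}$ whose extraction recovers $\cM$, so $\cM$ is polyhedral.

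The main obstacle is the construction and well-definedness of the matchings $\mu_{J,I}$ in the converse: one must show that for every pair $(J,I)$ with $|J|=|I|$ there is a \emph{unique} size-$m$ submatching compatible with $\cM$, and that these uniquely determined matchings glue into a single ensemble satisfying the closure axiom globally. The property supplies local forest structure and local compatibility for a fixed $J$, but propagating this to global consistency across all $J$, so that the submatching of $\mu_{J,I}$ on $(J',I')$ always agrees with $\mu_{J',I'}$, is where the real work lies. I expect this to require an amalgamation argument dual to Proposition~\ref{prop:i-amalgamation-linkage} and Theorem~\ref{thm:constructed+topes}, now carried out on the right node set, with Lemma~\ref{lem:unique-rdv-topes} used to rule out the alternating cycles that would witness a failure of compatibility.
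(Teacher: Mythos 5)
This statement is posed as an open \emph{conjecture} in the paper --- the authors explicitly say they ``make a step in this direction with the following conjecture'' and give no proof --- so there is no argument of theirs to compare yours against. Your forward direction is essentially sound: identifying each $\mu_s$ with the matching that the ensemble assigns to $(J,I_s)$ via the closure axiom, embedding $T$ in the right linkage covector $C$ on $J\sqcup I$, and using that $C$ is a tree (hence contains at most one perfect matching on any given node set, so every size-$|J|$ matching in $T$ is a matching of the ensemble and therefore compatible with $\cM$) is a correct and complete chain of reasoning, granted the Oh--Yoo equivalence between triangulations of $\Dprod{n-1}{d-1}$ and matching ensembles and the paper's definition of polyhedral via extraction.

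The converse, however, is not a proof but a plan, and the gap you flag yourself is precisely the open content of the conjecture. The compatible right submatching property gives you, for each fixed $J$ and each $(|J|+1)$-subset $I$, a forest whose size-$|J|$ matchings are compatible with $\cM$; it does not by itself produce a \emph{unique} candidate $\mu_{J,I'}$ for every pair $(J,I')$ --- both existence and uniqueness of a compatible perfect matching on $J\sqcup I'$ require an argument --- nor does it supply the global consistency across different left sets $J$ that the closure axiom demands. Your suggestion of a right-handed amalgamation dual to Proposition~\ref{prop:i-amalgamation-linkage} and Theorem~\ref{thm:constructed+topes} is a reasonable line of attack, but until that construction is carried out and shown to satisfy the support, closure and both linkage axioms simultaneously, the ``if'' direction remains unproved. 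As written, the proposal establishes only that polyhedral implies the compatible right submatching property; it does not settle the conjecture.
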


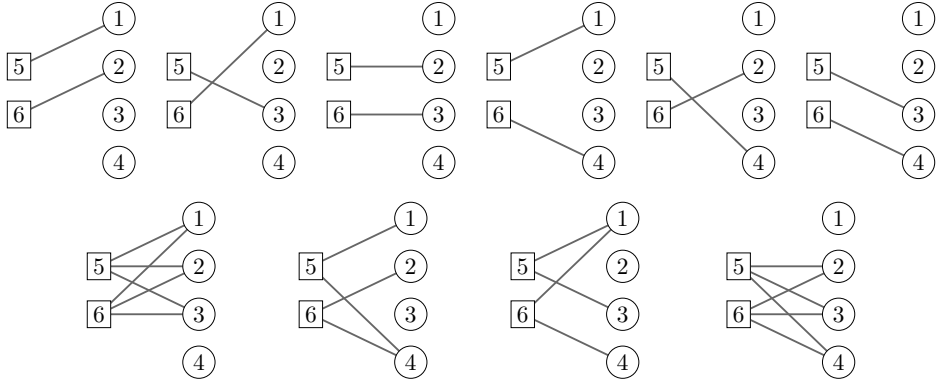
\begin{figure}[htb]
  \begin{center}
	\resizebox{\textwidth}{!}{
  \begin{tikzpicture}[scale=0.6]
\bigraphtwofourcoord{0}{0}{0.6}{1.2}{2.5};
\draw[EdgeStyle] (v1) to (w1);
\draw[EdgeStyle] (v2) to (w2);
\node[BoxVertex] (1) at (v1){5};
\node[BoxVertex] (2) at (v2){6};
\node[VertexStyle] (11) at (w1){1};
\node[VertexStyle] (22) at (w2){2};
\node[VertexStyle] (33) at (w3){3};
\node[VertexStyle] (44) at (w4){4};

\bigraphtwofourcoord{4}{0}{0.6}{1.2}{2.5};
\draw[EdgeStyle] (v1) to (w3);
\draw[EdgeStyle] (v2) to (w1);
\node[BoxVertex] (1) at (v1){5};
\node[BoxVertex] (2) at (v2){6};
\node[VertexStyle] (11) at (w1){1};
\node[VertexStyle] (22) at (w2){2};
\node[VertexStyle] (33) at (w3){3};
\node[VertexStyle] (44) at (w4){4};

\bigraphtwofourcoord{8}{0}{0.6}{1.2}{2.5};
\draw[EdgeStyle] (v1) to (w2);
\draw[EdgeStyle] (v2) to (w3);
\node[BoxVertex] (1) at (v1){5};
\node[BoxVertex] (2) at (v2){6};
\node[VertexStyle] (11) at (w1){1};
\node[VertexStyle] (22) at (w2){2};
\node[VertexStyle] (33) at (w3){3};
\node[VertexStyle] (44) at (w4){4};

\bigraphtwofourcoord{12}{0}{0.6}{1.2}{2.5};
\draw[EdgeStyle] (v1) to (w1);
\draw[EdgeStyle] (v2) to (w4);
\node[BoxVertex] (1) at (v1){5};
\node[BoxVertex] (2) at (v2){6};
\node[VertexStyle] (11) at (w1){1};
\node[VertexStyle] (22) at (w2){2};
\node[VertexStyle] (33) at (w3){3};
\node[VertexStyle] (44) at (w4){4};

\bigraphtwofourcoord{16}{0}{0.6}{1.2}{2.5};
\draw[EdgeStyle] (v1) to (w4);
\draw[EdgeStyle] (v2) to (w2);
\node[BoxVertex] (1) at (v1){5};
\node[BoxVertex] (2) at (v2){6};
\node[VertexStyle] (11) at (w1){1};
\node[VertexStyle] (22) at (w2){2};
\node[VertexStyle] (33) at (w3){3};
\node[VertexStyle] (44) at (w4){4};

\bigraphtwofourcoord{20}{0}{0.6}{1.2}{2.5};
\draw[EdgeStyle] (v1) to (w3);
\draw[EdgeStyle] (v2) to (w4);
\node[BoxVertex] (1) at (v1){5};
\node[BoxVertex] (2) at (v2){6};
\node[VertexStyle] (11) at (w1){1};
\node[VertexStyle] (22) at (w2){2};
\node[VertexStyle] (33) at (w3){3};
\node[VertexStyle] (44) at (w4){4};

\bigraphtwofourcoord{2}{-5}{0.6}{1.2}{2.5};
\draw[EdgeStyle] (v1) to (w1);
\draw[EdgeStyle] (v1) to (w2);
\draw[EdgeStyle] (v1) to (w3);
\draw[EdgeStyle] (v2) to (w1);
\draw[EdgeStyle] (v2) to (w2);
\draw[EdgeStyle] (v2) to (w3);
\node[BoxVertex] (1) at (v1){5};
\node[BoxVertex] (2) at (v2){6};
\node[VertexStyle] (11) at (w1){1};
\node[VertexStyle] (22) at (w2){2};
\node[VertexStyle] (33) at (w3){3};
\node[VertexStyle] (44) at (w4){4};

\bigraphtwofourcoord{7.3}{-5}{0.6}{1.2}{2.5};
\draw[EdgeStyle] (v1) to (w1);
\draw[EdgeStyle] (v1) to (w4);
\draw[EdgeStyle] (v2) to (w2);
\draw[EdgeStyle] (v2) to (w4);
\node[BoxVertex] (1) at (v1){5};
\node[BoxVertex] (2) at (v2){6};
\node[VertexStyle] (11) at (w1){1};
\node[VertexStyle] (22) at (w2){2};
\node[VertexStyle] (33) at (w3){3};
\node[VertexStyle] (44) at (w4){4};

\bigraphtwofourcoord{12.6}{-5}{0.6}{1.2}{2.5};
\draw[EdgeStyle] (v1) to (w1);
\draw[EdgeStyle] (v1) to (w3);
\draw[EdgeStyle] (v2) to (w1);
\draw[EdgeStyle] (v2) to (w4);
\node[BoxVertex] (1) at (v1){5};
\node[BoxVertex] (2) at (v2){6};
\node[VertexStyle] (11) at (w1){1};
\node[VertexStyle] (22) at (w2){2};
\node[VertexStyle] (33) at (w3){3};
\node[VertexStyle] (44) at (w4){4};

\bigraphtwofourcoord{18}{-5}{0.6}{1.2}{2.5};
\draw[EdgeStyle] (v1) to (w2);
\draw[EdgeStyle] (v1) to (w3);
\draw[EdgeStyle] (v1) to (w4);
\draw[EdgeStyle] (v2) to (w2);
\draw[EdgeStyle] (v2) to (w3);
\draw[EdgeStyle] (v2) to (w4);
\node[BoxVertex] (1) at (v1){5};
\node[BoxVertex] (2) at (v2){6};
\node[VertexStyle] (11) at (w1){1};
\node[VertexStyle] (22) at (w2){2};
\node[VertexStyle] (33) at (w3){3};
\node[VertexStyle] (44) at (w4){4};
\end{tikzpicture}
	}
  \caption{The submatching field on $\{\lno_5,\lno_6\}\sqcup\{\rno_1,\dots,\rno_4\}$ with all possible linkage covectors of matchings. Two of the linkage covectors contain cycles and so are not right linkage.}
  \label{fig:two+four+linkage+cycle}
	\end{center}
\end{figure}

\begin{example} Continuation from Example~\ref{ex:incompatible+linkage+covectors}.
  The $2 \times 2$ matchings on $\{\lno_5,\lno_6\}$ given in Figure~\ref{fig:two+four+linkage+cycle} are submatchings of matchings in $\cM$. Therefore, any matching stack that extends $\cM$ must contain them.
  However, the $2 \times 3$ linkage covectors on $\{\rno_1,\rno_2,\rno_3\}$ and $\{\rno_2,\rno_3,\rno_4\}$ contain cycles and so any matching stack extending $\cM$ cannot satisfy right linkage.
\end{example}

\section{Acknowledgements}

We would like to thank Alex Fink and Michael Joswig for their support and Felipe Rinc{\'o}n, Benjamin Schr{\"o}ter and Kristin Shaw for helpful discussions.
We are grateful to Amanda Cameron for careful reading.
We thank Pavel Galashin, Gaku Liu and Alex Postnikov for communicating the related work~\cite{GalashinNenashevPostnikov2018}.
We would like to thank the referee for helpful comments. 

\bibliographystyle{amsplain}
\bibliography{main}

\end{document}